\def\marginpar#1{\ignorespaces}
\DeclareMathOperator\inv{inv}
\DeclareMathOperator\cyc{Cyc}
\DeclareMathOperator\blk{Blk}
\DeclareMathOperator\cmp{Cmp}
\newtheorem{theorem}{Theorem}[section]
\newtheorem{lemma}[theorem]{Lemma}
\newtheorem{proposition}[theorem]{Proposition}
\newtheorem{corollary}[theorem]{Corollary}
\newtheorem{definition}[theorem]{Definition}
\newtheorem{remark}[theorem]{Remark}
\numberwithin{equation}{section}
\begin{document}
\title[Regenerative permutations]{Regenerative random permutations of integers}

\author[Jim Pitman]{{Jim} Pitman}
\address{Statistics department, University of California, Berkeley. Email: 
} \email{pitman@stat.berkeley.edu}

\author[Wenpin Tang]{{Wenpin} Tang}
\address{Statistics department, University of California, Berkeley. Email: 
} \email{wenpintang@stat.berkeley.edu}

\date{\today} 
\begin{abstract}
Motivated by recent studies of large Mallows$(q)$ permutations, we propose a class of random permutations of $\mathbb{N}_{+}$ and of $\mathbb{Z}$, called {\em regenerative permutations}. 
Many previous results of the limiting Mallows$(q)$ permutations are recovered and extended.
Three special examples: blocked permutations, $p$-shifted permutations and $p$-biased permutations are studied.
\end{abstract}

\maketitle
\textit{Key words :} Bernoulli sieve, cycle structure, indecomposable permutations, Mallows permutations, regenerative processes, renewal processes, size biasing.

\textit{AMS 2010 Mathematics Subject Classification: } 05A05, 60C05, 60K05.

\setcounter{tocdepth}{1}
\tableofcontents
\setstcolor{red}
\section{Introduction and main results}

Random permutations have been extensively studied in combinatorics and probability theory. 
They have a variety of applications including:
\begin{itemize}
\item
statistical theory, e.g. Fisher-Pitman permutation test \cite{Fisher,Pitman1}, ranked data analysis  \cite{Critchlow,DiaconisRank};
\item 
population genetics, e.g. Ewens' sampling formula \cite{Ewens72} for the distribution of allele frequencies in a population with neutral selection;
\item
quantum physics, e.g. spatial random permutations \cite{U08,BU09} arising from the Feynman representation of interacting Bose gas; 
\item
computer science, e.g. data streaming algorithms \cite{Muth,HLMV}, interleaver designs for channel coding \cite{DP,BM}.
\end{itemize}

Interesting mathematical problems are $(i)$ understanding the asymptotic behavior of large random permutations, and $(ii)$ generating a sequence of consistent random permutations. 
Over the past few decades, considerable progress has been made in these two directions:
\begin{enumerate}[$(i)$]
\item
Shepp and Lloyd \cite{SL}, Vershik and Shmidt \cite{VS1,VS2} studied the distribution of cycles in a large uniform random permutation. 
The study was extended by Diaconis, McGrath and Pitman \cite{DMP}, Lalley \cite{Lalley} for a class of large non-uniform permutations.
Hammersley \cite{Hammersley} first considered the longest increasing subsequences in a large uniform random permutation. 
The constant in the law of large numbers was proved by Logan and Shepp \cite{LS}, Kerov and Vershik \cite{KV} via representation theory, and by Aldous and Diaconis \cite{AD}, Sepp\"al\"ainen \cite{Sepp} using probabilistic arguments. 
The Tracy-Widom limit was proved by Baik, Deift and Johansson \cite{BDJ}.
See also Romik \cite{Romik}.
Recently, limit theorems for large Mallows permutations have been considered by Mueller and Starr \cite{MS}, Bhatnagar and Peled \cite{BP}, Basu and Bhatnagar \cite{BB}, Gladkich and Peled \cite{GP}.
\item
Pitman \cite{Pitman95,Pitmanbook} provided a sequential construction of random permutations of $[n]$ with consistent cycle structures. 
This is known as the {\em Chinese restaurant process}, or {\em virtual permutations} \cite{KOV93,KOV} in the Russian literature.
A description of the Chinese restaurant process in terms of records was given by Kerov \cite{Kerov}, Kerov and Tsilevich \cite{KS95}. See also Pitman \cite{Pitman17}.
Various families of consistent random permutations have been devised by Gnedin and Olshanski \cite{GO06,GO09,GO12}, Gnedin \cite{G11}, Gnedin and Gorin \cite{GG15,GG16} in a sequential way, and by Fichtner \cite{Fichtner}, Betz and Ueltschi \cite{BU09}, Biskup and Richthammer \cite{BR} in a Gibbsian way.
\end{enumerate}

The inspiration for this article is a series of recent studies of random permutations of countably infinite sets by Gnedin and Olshanski \cite{GO09,GO12}, Basu and Bhatnagar \cite{BB}, Gladkich and Peled \cite{GP}.
Here a permutation of a countably infinite set is a bijection of that set.
Typically, these models are obtained as limits in distribution, as $n \to \infty$, of some sequence of random permutations $\Pi^{[n]}$, with some given distributions $Q_n$ on the set $\mathfrak{S}_n$ of permutations of the finite set $[n]:= \{1, \ldots, n\}$.
The distribution of a limiting bijection $\Pi: \mathbb{N}_{+} \to \mathbb{N}_{+}$ is then defined by
\begin{equation}
\label{limperm}
\mathbb{P}( \Pi_i = n_i, 1 \le i \le k ) := \lim_{n \to \infty} \mathbb{P} ( \Pi^{[n]}_i = n_i, 1 \le i \le k),
\end{equation}
for every sequence of $k$ distinct values $n_i \in \mathbb{N}_{+}:=\{1,2,\ldots\}$, provided these limits exist and sum to  $1$ over all choices of $(n_i, 1 \le i \le k ) \in \mathbb{N}_{+}^k$. 
It is easy to see that for $Q_n = U_n$ the uniform distribution on $\mathfrak{S}_n$, the limits in \eqref{limperm} are identically equal to $0$, so this program fails to produce a limiting permutation of $\mathbb{N}_{+}$. 
However, it was shown by Gnedin and Olshanski \cite[Proposition A.1]{GO09} that for every $0 < q < 1$ this program is successful for $Q_n = M_{n,q}$, the {\em Mallows$(q)$ distribution on $\mathfrak{S}_n$} \cite{Mallows}, which assigns each permutation $\pi$ of $[n]$ probability
\begin{equation}
\label{mallowsn}
\mathbb{P}( \Pi^{[n]} = \pi) = M_{n,q}(\pi):= Z_{n,q}^{-1} \,\, q^{\inv(\pi)}  \qquad \mbox{for}~ \pi \in \mathfrak{S}_n,
\end{equation}
where $\inv (\pi):= \{ (i,j): 1 \le i < j \le n, \pi(i) > \pi(j) \}$ is the {\em number of inversions} of $\pi$,
and the normalization constant $Z_{n,q}$ is well known to be the {\em $q$-factorial function}
\begin{equation}
\label{qfac}
Z_{n,q} = \prod_{j=1}^n \sum_{i = 1}^j q^{i-1} = (1-q)^{-n} \prod_{j = 1 }^n ( 1 - q^j) \qquad \mbox{for}~0 < q <1.
\end{equation}
See Diaconis and Ram \cite[Section 2.e]{DR} for algebraic properties of Mallows($q$) distributions, and additional references. 
Note that it is possible to define the projective limit for both $Q_n = U_n$ and $Q_n = M_{n,q}$:
\begin{itemize}
\item
For $Q_n = U_n$, the consistency of the family $(U_n;~n \geq 1)$ with respect to the projection is closely related to the {\em Fisher-Yates-Durstenfeld-Knuth shuffle} \cite[Section 3.4.2]{Knuth2}. 
The projective limit is the Chinese restaurant process with $\theta = 1$. 
\item
For $Q_n = M_{n,q}$, the fact that $(M_{n,q};~n \geq 1)$ are consistent relative to the projection is a consequence of the {\em Lehmer code} \cite[Section 5.1.1]{Knuth3}.
Moreover, Gnedin and Olshanski \cite[Proposition A.6]{GO09} proved that the projective limit coincides with the limit in distribution \eqref{limperm}.
\end{itemize}

Gnedin and Olshanski \cite{GO09} gave a number of other characterizations of the limiting distribution of $\Pi$ so obtained for each $0 < q < 1$.
They continued in \cite{GO12} to show that there exists a two-sided random permutation $\Pi^*$ of $\mathbb{Z}$, which is a similar limit in distribution of Mallows$(q)$ permutations of $[n]$, shifted to act on intervals of integers
$[1 - a_n, n - a_n ]$, for any sequence of integers $a_n$ with both $a_n \to \infty$ and $n - a_n \to \infty$ as $n \to \infty$.
They also showed that for each $0 < q < 1$ the process $\Pi^*$ is {\em stationary}, meaning that the {\em process of displacements} $(D^{*}_z:=\Pi^*_z-z;~z \in \mathbb{Z})$ is a stationary process:
\begin{equation}
(D^*_z;~z \in \mathbb{Z}) \stackrel{(d)}{=} (D^*_{a+z};~z \in \mathbb{Z}) \qquad \mbox{for}~a \in \mathbb{Z}.  
\end{equation}
These results were further extended by Basu and Bhatnagar \cite{BB}, Gladkich and Peled \cite{GP}, who established a number of properties of the limiting Mallows$(q)$ permutations of $\mathbb{N}_{+}$ and of $\mathbb{Z}$, as well as provided many finer asymptotic results regarding the behavior of various functionals of Mallows$(q)$ permutations of $[n]$, including cycle counts, and longest increasing subsequences, in various finer limit regimes with $q$ approaching either $0$ or $1$ as $n \to \infty$.
The analysis of limiting Mallows$(q)$ permutations $\Pi$ of $\mathbb{N_{+}}$ by these authors relies on a key regenerative property of these permutations, which is generalized in this paper to provide companion results for a much larger class of random permutations of $\mathbb{N}_{+}$ and of $\mathbb{Z}$.

For a permutation $\Pi$ of a countably infinite set $I$, however it may be constructed, there is the basic question:
\begin{equation}
\parbox{30em}{
$\bullet$ {\em \mbox{ is every orbit of } $\Pi$ \mbox{ finite ?}}
}
\end{equation}
If so, say {\em $\Pi$ has only finite cycles}. 
For $I= \mathbb{N}_{+}$ or $\mathbb{Z}$, one way to show $\Pi$ has only finite cycles, and to gain some control on the distribution of cycle lengths, is to establish the stronger property:
\begin{equation}
\parbox{30em}{
$\bullet$ {\em \mbox{ every component of } $\Pi$ \mbox{ has finite length.} }
}
\end{equation}
Here we need some vocabulary. 
Let $I \subseteq \mathbb{Z}$ be an interval of integers, and $\Pi : I \rightarrow I$ be a permutation of $I$. 
Call $n \in I$ a {\em splitting time} of $\Pi$, or say that $\Pi$ {\em splits at }$n$, if $\Pi$ maps $(-\infty, n]$ red onto itself, or equivalently, $\Pi$ maps $I \cap [n+1 , \infty)$ onto itself.
The set of splitting times of $\Pi$, called the {\em connectivity set} by Stanley \cite{Stanley05}, is the collection of finite right endpoints of some finite or infinite family of {\em components of $\Pi$}, say $\{ I_j \}$.
So $\Pi$ acts on each of its components $I_j$ as an {\em indecomposable permutation} of $I_j$, meaning that $\Pi$ does not act as a permutation on any proper subinterval of $I_j$.
These components $I_j$ form a partition of $I$, which is coarser than the partition by cycles of $\Pi$. 
For example, the permutation $\pi = (1)(2,4)(3) \in \mathfrak{S}_4$ induces the partition by components $[1] [2,3,4]$.
A block of $\Pi$ is a component of $\Pi$, or a union of adjacent components of $\Pi$.
For any {\em block} $J$ of $\Pi$ with $\#J = n$ (resp. $\# J = \infty$), the {\em reduced block of $\Pi$ on $J$} is the permutation of $[n]$ (resp. $\mathbb{N}_{+}$) defined via conjugation of $\Pi$ by the shift from $J$ to $[n]$ (resp. $\mathbb{N}_{+}$).

For any permutation $\Pi$ of $\mathbb{N}_{+}$, there are two ways to express the event $\{\Pi \mbox{ splits at } n\}$  as an intersection of $n$ events:
$$
\{ \Pi \mbox{ splits at } n \} = \bigcap_{i=1}^n \{\Pi_i \le n \} =  \bigcap_{i=1}^n \{\Pi^{-1}_i \le n \}.
$$
An alternative way of writing this event is:
$$
\{ \Pi \mbox{ splits at } n \} = \bigcap_{i=1}^n  \left\{ \Pi^{-1}_i < \min_{j > n} \Pi^{-1}_j \right\} .
$$
For if $\Pi$ splits at $n$, then $\Pi^{-1}_i < n+1 = \min_{j > n} \Pi^{-1}_j$ for every $1 \le i \le n$.
Conversely, if $\min_{j > n} \Pi^{-1}_j = m+1$ say, and $\Pi^{-1}_i < m+1$ for every $1 \le i \le n$, then the image of $[n]$ via $\Pi^{-1}$ is equal to $[m]$, so $m = n$ and $\Pi^{-1}_i  \le n$ for every $1 \le i \le n$.
Let
\begin{equation}
\label{Ani}
A_{n,i}:= \left\{  \min_{j > n} \Pi^{-1}_j  < \Pi^{-1}_i \right\},
\end{equation}
be the complement of the $i^{th}$ event in the above intersection.  
Then by the principle of inclusion-exclusion 
\begin{equation}
\label{ie}
\mathbb{P}( \Pi \mbox{ splits at } n ) = 1 + \sum_{j = 1}^n (-1)^j \Sigma_{n,j},
\end{equation}
where
\begin{equation}
\label{defsigma}
\Sigma_{n,j}: = \sum_{1 \leq i_1 < \cdots <i_j \leq n} \mathbb{P}\left(\bigcap_{k = 1}^j A_{n,i_k}\right).
\end{equation}
So there are the Bonferroni bounds
\begin{equation*}
\mathbb{P}( \Pi \mbox{ splits at } n ) \ge   1  - \Sigma_{n,1}, \quad \mathbb{P}( \Pi \mbox{ splits at } n ) \le   1  - \Sigma_{n,1}  + \Sigma_{n,2},
\end{equation*}
and so on. 
Moreover, each of the intersections of the $A_{n,i}$ is an event of the form
$$
F_{B,C}:= \left\{\min_{j \in B} \Pi^{-1}_j < \min_{h \in C} \Pi^{-1}_h  \right\}, 
$$
for instance $A_{n,i} A_{n,j} = F_{B,C}$ for $F = \{n,n+1, \ldots \}$ and $C = \{i,j\}$.

An approach to the problem of whether $\Pi$ has almost surely finite component lengths for a number of interesting models, including the limiting Mallows$(q)$ model, is provided by the following structure. 
Let
$$\mathbb{N}_{+}: = \{1,2, \ldots\} \quad \mbox{and} \quad \mathbb{N}_0:= \{0,1, 2, \ldots \}.$$
If a permutation $\Pi$ of $\mathbb{N}_{+}$ splits at $n$, let $\Pi^n$ be the {\em residual permutation} of $\mathbb{N}_{+}$ defined by conjugating the action of $\Pi$ on $\mathbb{N}_{+} \setminus [n]$ by a shift back to $\mathbb{N}_{+}$: 
$$\Pi^n_i:= \Pi_{n+i} - n \qquad \mbox{for}~i \in \mathbb{N}_{+}.$$
Call $(T_n;~n \geq 0)$ a {\em delayed renewal process} if 
$$T_n := T_0 + Y_1 + \cdots + Y_n,$$
with $T_0 \in \mathbb{N}_0, Y_1, Y_2, \ldots \in \mathbb{N}_{+} \cup \{\infty\}$ independent, and the $Y_i$ identically distributed, allowing also the transient case with $\mathbb{P}(Y_1 < \infty) < 1$. 
When $T_0: = 0$, call $(T_n;~n \geq 0)$ a {\em renewal process with zero delay}.
For $n > 0$, let
$$R_n:= \sum_{k=0}^\infty 1(T_k = n),$$
be the {\em renewal indicator} at time $n$.
The definition below is tailored to the general theory of regenerative processes presented by Asmussen \cite[Chapter VI]{Asmussen}.

\begin{definition}
\label{regendef}
{\em
~
\begin{enumerate}[(1)]
\item
Call a random permutation of $\Pi$ of $\mathbb{N}_{+}$ {\em regenerative with respect to the delayed renewal process $(T_n; \, n \ge 0)$} if every $T_i$ is a splitting time of $\Pi$, and for each $n >0$ such that $\mathbb{P}(R_n=1) > 0$, conditionally given a renewal at $n$,
\begin{enumerate}[$(i).$]
\item
there is the equality in distribution 
$$
 ( \Pi^n, R_{n+1}, R_{n+2}, \ldots) \stackrel{(d)}{=} (\Pi^0, R_{1}^0, R_{2}^0, \ldots)
$$
between the joint distribution of $\Pi^n$ with the residual renewal indicators $(R_{n+1},$\\
$ R_{n+2}, \ldots)$, 
and the joint distribution of some random permutation $\Pi^0$ of $\mathbb{N}_{+}$ with renewal indicators $(R_{1}^0, R_{2}^0, \ldots)$ with zero delay;
\item
the initial segment $(R_0, R_1, \ldots R_n)$ of the delayed renewal process is independent of $( \Pi^n, R_{n+1}, R_{n+2}, \ldots)$.
\end{enumerate}
\item
Call a random permutation of $\Pi$ of $\mathbb{N}_{+}$ {\em regenerative} if $\Pi$ is regenerative with respect to some renewal process $(T_n; \, n \ge 0)$;
\item
Call a random permutation of $\Pi$ of $\mathbb{N}_{+}$ {\em strictly regenerative} if $\Pi$ is regenerative with respect to its own splitting times.
\end{enumerate}
}
\end{definition}
In Definition \ref{regendef}, we do not require the independence of the pre-renewal and the post-renewal permutations. 
This is called the {\em wide-sense regeneration} \cite[Chapter 10]{Thbook}, while assuming further independence refers to the {\em regeneration in the classical sense}.
The formulation of Definition \ref{regendef} was motivated by its application to three particular models of random permutations of $\mathbb{N}_{+}$, introduced in the next three definitions.
Each of these models is parameterized by a discrete probability distribution on $\mathbb{N}_{+}$, say $p = (p_1,p_2, \ldots)$.
These models are close in spirit to the similarly parameterized models of {\em $p$-mappings} and {\em $p$-trees}  studied in \cite{AP02, AMP}. 
See also \cite{GP05,GHP,Gnedin10} for closely related ideas of regeneration in random combinatorial structures.

General properties of a regenerative random permutation $\Pi$  of $\mathbb{N}_{+}$ with zero delay can be read from the standard theory of regenerative processes \cite[Chapter XIII]{Feller}. Let $u_0:=0$, and 
\begin{align}
u_n & := \mathbb{P}( \Pi \mbox{ regenerates at } n ), \\
f_n  & := \mathbb{P}( \Pi \mbox{ regenerates for the first time at } n ).
\end{align}
If a random permutation $\Pi$ of $\mathbb{N}_{+}$ is regenerative but not strictly regenerative, the renewal process $(T_n; \, n \ge 0)$ is not uniquely associated with $\Pi$. 
So the sequences $u_n$ and $f_n$ are not necessarily intrinsic to $\Pi$ but to $(T_n; \, n \ge 0)$.
Each of these sequences determines the other by the recursion 
\begin{equation}
\label{ufrecursion}
u_n = f_1 u_{n-1} + f_2 u_{n-2} + \cdots +f_n u_0 \quad \mbox{for all }  n > 0,
\end{equation}
which may be expressed in terms of the generating functions $U(z) : = \sum_{n=0}^{\infty} u_n z^n$ and $F(z):=\sum_{n=1}^{\infty} f_n z^n$ as 
\begin{equation}
\label{UFrelation}
U(z) = (1-F(z))^{-1}.
\end{equation}
According to the discrete renewal theorem, either
\begin{enumerate}[$(i)$]
\item
({\em transient case}) $\sum_{n=1}^\infty u_n < \infty$, when $\mathbb{P}(Y_1 < \infty ) < 1$, and $\Pi$ has only finitely many regenerations with probability one, or
\item
({\em recurrent case}) $\sum_{n=1}^\infty u_n = \infty$,  when $\mathbb{P}(Y_1 < \infty) = 1$, and with probability one $\Pi$ has infinitely many regenerations, hence only finite components, and only finite cycles.
\end{enumerate}

Here is a simple way of constructing regenerative random permutations of $\mathbb{N}_{+}$ in the classical sense:
\begin{definition}
\label{regenblock}
{\em
For a probability distribution $p:= (p_1, p_2, \ldots)$ on $\mathbb{N}_{+}$, and $Q_n$ for each $n \in \mathbb{N}_{+}$ a probability distribution on $\mathfrak{S}_n$, call a random permutation $\Pi$ of $\mathbb{N}_{+}$  {\em recurrent regenerative with block length distribution $p$ and blocks governed by $(Q_n;~n \ge 1)$}, if $\Pi$ is a concatenation of an infinite sequence {\tt Block}$_i$, $i \geq 0$ such that
\begin{enumerate}[$(i)$]
\item 
the lengths $Y_i$ of {\tt Block}$_i$, $i \geq 1$ are independent and identically distributed (i.i.d.) with common distribution $p$, and are independent of the length $T_0$ of {\tt Block}$_0$ which is finite almost surely;
\item
conditionally given the block lengths, say $Y_i= n_i$ for $i = 1, 2, \ldots$, the reduced blocks of $\Pi$ are independent random permutations of $[n_i]$ with distributions $Q_{n_i}$.
\end{enumerate}
A transient regenerative permutation $\Pi$ of $\mathbb{N}_{+}$ can be constructed as above up to time $T_N$ for $T_n$ the sum of $n$ i.i.d. variables $Y_i$ with distribution $\mathbb{P}(Y_1 = n ) = p_n/\sum_{k} p_k$, and $N$ has geometric $(1 - \sum_{k} p_k)$ distribution on $\mathbb{N}_0$, independent of the sequence $(Y_i; \, i \ge 1)$.
Then $T_N$ is the time of the last finite split point of $\Pi$, and given $T_N = n$ and the restriction of $\Pi$ on $[n]$ so created, the restriction of $\Pi$ on $[n,\infty)$, shifted back to be a permutation of $\mathbb{N}_{+}$ can be constructed according to any fixed probability distribution on the set of all permutations of $\mathbb{N}_{+}$ with no splitting times.
}
\end{definition}

The main focus here is the {\em positive recurrent case}, with mean block length $\mu:= \mathbb{E}(Y_1) < \infty$, and an aperiodic distribution of $Y_1$, which according to the discrete renewal theorem \cite[Chapter XIII, Theorem 3]{Feller} makes
\begin{equation}
\label{descren}
\lim_{n\to \infty} u_n  = 1/\mu >0 .
\end{equation}
Then numerous asymptotic properties of the recurrent regenerative permutation $\Pi$ with this distribution of block lengths can be read from standard results in renewal theory, as discussed further in Section \ref{s3}.
In particular,  starting from any positive recurrent random permutation $\Pi$ of $\mathbb{N}_{+}$,
renewal theory gives an explicit construction of a stationary, two-sided version $\Pi^*$ of $\Pi$, acting as a random permutation of $\mathbb{Z}$, along with ergodic theorems indicating the existence of limiting frequencies for various counts of cycles and components, for both the one-sided and two-sided versions.
This greatly simplifies the construction of stationary versions of the limiting Mallows$(q)$ permutations in \cite{GO12,GP}.

Observe that for every recurrent, strictly regenerative permutation of $\mathbb{N}_{+}$, the support of $Q_n$ is necessarily contained in the set $\mathfrak{S}^{\dagger}_n$ of indecomposable permutations of $[n]$.
As will be seen in Section \ref{s4}, even the uniform distribution on $\mathfrak{S}^{\dagger}_n$ has a nasty denominator for which there is no very simple formula.
The difficulty motivates the study of other constructions of random permutations of $\mathbb{N}_{+}$, such as the following:

\begin{definition}
\label{pshifted}
{\em
For $p$ a probability distribution on $\mathbb{N}_{+}$ with $p_1 >0$, call a random permutation $\Pi$ of $\mathbb{N}_{+}$ a {\em $p$-shifted permutation} of $\mathbb{N}_{+}$, if $\Pi$ has the distribution defined by the following construction from an i.i.d. sample $(X_j;~ j \ge 1)$ from $p$. Inductively,  let
\begin{itemize} 
\item 
$\Pi_1 := X_1$, 
\item 
for $i \ge 2$, let $\Pi_i := \psi(X_i)$ where $\psi$ is the increasing bijection from $\mathbb{N}_{+}$ to \\
 $\mathbb{N}_{+} \setminus \{ \Pi_1, \Pi_2, \cdots, \Pi_{i-1} \}.$
\end{itemize}
For example, if $X_1 = 2$, $X_2 = 1$, $X_3 = 2$, $X_4 = 3$, $X_5 = 4$, $X_6 = 1 \ldots$, then the associated permutation is $(2,1,4,6,8,3,\ldots)$.
}
\end{definition}

The procedure described in Definition \ref{pshifted} is a version of sampling without replacement, or {\em absorption sampling} \cite{Raw, Kemp}.
Gnedin and Olshanski \cite{GO09} introduced this construction of $p$-shifted permutations of $\mathbb{N}_{+}$ for $p$ the geometric$(1-q)$ distribution. 
They proved that the limiting Mallows$(q)$ permutations of $[n]$ is the geometric$(1-q)$-shifted permutation of $\mathbb{N}_{+}$. 
The regenerative feature of geometric$(1-q)$-shifted permutations was pointed out and exploited in \cite{BB,GP}. This regenerative feature is in fact a property of $p$-shifted permutations of $\mathbb{N}_{+}$ for any $p$ with $p_1>0$. 
This observation allows a number of previous results for limiting Mallows$(q)$ permutations to be extended as follows. 

\begin{proposition}
\label{pshiftedprop}
For each fixed probability distribution $p$ on $\mathbb{N}_{+}$ with $p_1 >0$, and $\Pi$  a $p$-shifted random permutation of $\mathbb{N}_{+}$:
\begin{enumerate}[$(i)$]
\item
The joint distribution of the random injection $(\Pi_1, \ldots, \Pi_n): [n] \to \mathbb{N}_{+}$ is given by the formula
\begin{equation}
\label{inject}
\mathbb{P}( \Pi_i = \pi_i, 1 \le i \le n) = \prod_{j=1}^n p \left( \pi_j - \sum_{1 \le i < j} 1 (\pi_i < \pi_j) \right),
\end{equation}
for every fixed injection $(\pi_i, 1 \le i \le n): [n] \to \mathbb{N}_{+}$, and $p(k):= p_k$.
\item
The probability that $\Pi$ maps $[n]$ to $[n]$ is
\begin{equation}
\label{un}
u_{n} := \mathbb{P}( [n] \mbox{ is a block of } \Pi ) = \prod_{j = 1}^n \sum_{i=1}^j p_i .
\end{equation}
\item
The random permutation $\Pi$ is strictly regenerative, with regeneration at every $n$ such that $[n]$ is a block of $\mathbb{N}_{+}$, 
and the renewal sequence $(u_n;~n \geq 1)$ as above.
\item
The distribution of component lengths $f_n:= \mathbb{P}(Y_1 = n)$ where $Y_1$ is the length of the first component of $\Pi$ is given by the probability generating function
\begin{equation}
\label{updag}
\mathbb{E} z^{Y_1} = \sum_{n=1}^\infty f_n z^n = 1 - \frac{1}{U(z)} \quad \mbox{ where } U(z) := 1 + \sum_{n=1}^ \infty u_{n} z^n  .
\end{equation}
\item
If $\mathbb{E}X_1 = m:= \sum_{i} i p_i < \infty$, then $\mu:= \mathbb{E}(Y_1) < \infty$, so $\Pi$ is positive recurrent, with limiting renewal probability
\begin{equation}
\label{limprod}
u_\infty:= \lim_{n \to \infty} u_n = \mu^{-1} = \prod_{j=1}^\infty (1 - \mathbb{P}(X_1 >j) ),
\end{equation}
Then $\Pi$ has cycle counts with limit frequencies detailed later in \eqref{limfreqs}, and there is a stationary version $\Pi^*$ of $\Pi$ acting on $\mathbb{Z}$, call it a {\em $p$-shifted random permutation of $\mathbb{Z}$.}
\item
If $ m = \infty$ then $\Pi$ is either null recurrent or transient, according to whether $U(1)$ is infinite or finite, and there is no stationary version of $\Pi$ acting on $\mathbb{Z}$.
\end{enumerate}
\end{proposition}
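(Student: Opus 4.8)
The plan is to reduce everything to the i.i.d.\ sample $(X_j)_{j\ge1}$ underlying Definition~\ref{pshifted} and to exhibit the renewal structure explicitly; once that is done, parts (iv)--(vi) are essentially bookkeeping with the discrete renewal theorem and the general machinery of Section~\ref{s3}. First, for (i), I would unwind the construction: the event $\{\Pi_i=\pi_i,\ 1\le i\le n\}$ holds exactly when, for each $j$, $X_j$ is the rank of $\pi_j$ within $\mathbb{N}_{+}\setminus\{\pi_1,\dots,\pi_{j-1}\}$; since the $\pi_i$ are distinct that rank equals $\pi_j-\#\{i<j:\pi_i<\pi_j\}$, and independence of the $X_j$ yields \eqref{inject}.

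For (ii) I would argue down the construction conditionally: if $\Pi_1,\dots,\Pi_{j-1}$ are distinct elements of $[n]$, then $[n]\setminus\{\Pi_1,\dots,\Pi_{j-1}\}$ consists precisely of the $n-j+1$ smallest elements of $\mathbb{N}_{+}\setminus\{\Pi_1,\dots,\Pi_{j-1}\}$, so $\Pi_j\le n$ iff $X_j\le n-j+1$. Induction then gives the key identity
\begin{equation}
\label{blockeq}
\{[n]\text{ is a block of }\Pi\}=\bigcap_{j=1}^{n}\{X_j\le n-j+1\},
\end{equation}
because $n$ distinct integers all at most $n$ must exhaust $[n]$; taking probabilities and reindexing gives $u_n=\prod_{j=1}^n\sum_{i=1}^j p_i$, which is \eqref{un}. (One could instead sum \eqref{inject} over $\pi\in\mathfrak{S}_n$, but \eqref{blockeq} is what drives (iii).)

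The heart of the argument is (iii). From \eqref{blockeq} the event $\{[n]\text{ is a block}\}$ and the reduced block of $\Pi$ on $[n]$ are measurable with respect to $\sigma(X_1,\dots,X_n)$, while on that event the residual permutation $\Pi^n$ is, by Definition~\ref{pshifted} applied on the shifted half-line, the $p$-shifted permutation generated by $(X_{n+1},X_{n+2},\dots)$ --- a sequence independent of $(X_1,\dots,X_n)$, hence of the conditioning event, with the unconditional $p$-shifted law. That is exactly the equality in distribution and the past/future independence demanded by Definition~\ref{regendef}, with renewal probabilities $\mathbb{P}(R_n=1)=u_n$; since the splitting times of a $p$-shifted permutation are precisely the $n$ with $[n]$ a block, $\Pi$ is strictly regenerative with renewal sequence $(u_n)$. (I would also record the easy fact, using $p_1>0$, that $\Pi$ really is a bijection: value $1$ enters the image the first time some $X_j=1$, and inductively value $k$ enters once $1,\dots,k-1$ are hit and a later $X_j=1$, so every value is attained a.s.) Part (iv) is then immediate: with zero delay the renewal sequence obeys \eqref{ufrecursion}--\eqref{UFrelation}, so $F(z)=\mathbb{E}z^{Y_1}=1-1/U(z)$, which is \eqref{updag}; alternatively, conditioning on the first splitting time $k\le n$ and on the residual being $p$-shifted gives $u_n=\sum_{k\le n}f_ku_{n-k}$ directly.

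Finally, for (v) and (vi) I would rewrite \eqref{un} as $u_n=\prod_{j=1}^n(1-\mathbb{P}(X>j))$ for $X$ with law $p$; since $p_1>0$ every factor lies in $(0,1)$, so $u_n$ decreases to $u_\infty=\prod_{j\ge1}(1-\mathbb{P}(X>j))$, which is positive iff $\sum_{j\ge1}\mathbb{P}(X>j)<\infty$, i.e.\ iff $m=\mathbb{E}X<\infty$. Aperiodicity holds since $f_1=u_1=p_1>0$, so the discrete renewal theorem \cite[Chapter~XIII, Theorem~3]{Feller} gives $\lim_n u_n=1/\mu$ in the recurrent case. Hence if $m<\infty$ then $\mu=u_\infty^{-1}<\infty$, $\Pi$ is positive recurrent, and $u_\infty=\mu^{-1}=\prod_{j\ge1}(1-\mathbb{P}(X>j))$, which is \eqref{limprod}; the cycle-count frequencies \eqref{limfreqs} and the stationary two-sided $p$-shifted permutation $\Pi^*$ on $\mathbb{Z}$ then follow from the general positive-recurrent regenerative theory of Section~\ref{s3}. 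If $m=\infty$ then $u_\infty=0$, so $\Pi$ cannot be positive recurrent; it is recurrent or transient according as $U(1)=\sum_n u_n$ is infinite or finite (equivalently $F(1)=1$ or $F(1)<1$), and in either case Section~\ref{s3} rules out a stationary version on $\mathbb{Z}$, since such a version would contain a stationary two-sided renewal process with finite mean gap. I expect the only non-routine points to be the clean passage from \eqref{blockeq} to the regeneration in (iii), and the appeal to Section~\ref{s3} for the existence and the non-existence of the stationary version; the rest is a short computation or a citation.
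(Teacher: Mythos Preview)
Your proof is correct. The main departures from the paper are in parts (ii) and (v)--(vi). For (ii) the paper sums \eqref{inject} over $\pi\in\mathfrak{S}_n$ and invokes the Lehmer-code bijection $\mathfrak{S}_n\to[1]\times[2]\times\cdots\times[n]$, whereas you go straight to the event identity \eqref{blockeq}; your route is shorter and, as you note, is exactly what feeds into (iii). For (v)--(vi) the paper follows Basu--Bhatnagar and introduces the Markov chain $M_0=0$, $M_n=\max(M_{n-1},X_n)-1$ on $\mathbb{N}_0$, interpreted as the number of gaps in the range of $(\Pi_1,\ldots,\Pi_n)$, so that $\{M_n=0\}$ is the regeneration event; positive recurrence of $\Pi$ is then read off from the invariant measure of $M$, which is summable iff $\mathbb{E}X_1<\infty$. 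You instead read positive recurrence directly from the infinite product $u_\infty=\prod_{j\ge1}(1-\mathbb{P}(X>j))$ combined with the discrete renewal theorem. Your argument is more elementary and self-contained for the statement at hand; the paper's Markov-chain approach buys the structural picture of the gap process, which connects to finer asymptotics and to the loop-erased walk analogy mentioned after the proof. One small point worth making explicit in your (v): before invoking the renewal theorem to identify $u_\infty=1/\mu$ you need recurrence, which follows at once from $u_\infty>0\Rightarrow\sum_n u_n=\infty$.
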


Even for the extensively studied limiting Mallows$(q)$ model, Proposition \ref{pshiftedprop} contains some new formulas and characterizations of the distribution, which are discussed in Section \ref{s5}.
An interesting byproduct of this proposition for a general $p$-shifted permutation is the following classical result of Kaluza \cite{Kaluza}:
\begin{corollary} \cite{Kaluza}
Every sequence $(u_n;~ n \ge 0)$ with
\begin{equation}
\label{kaluza}
0 < u_n \le u_0 = 1 \quad \mbox{and} \quad u_n^2 \le u_{n-1} u_{n+1} \mbox{ for all } n \ge 1,
\end{equation}
is a renewal sequence. The sequence $(u_n;~n \ge 0)$ satisfying \eqref{kaluza} is called a Kaluza sequence.
The renewal process associated with a Kaluza sequence is generated by the random sequence of times $n$ at which $[n]$ is a block of $\Pi$, for $\Pi$ a $p$-shifted permutation of $\mathbb{N}_{+}$, with 
$$p_1: = u_1 \quad \mbox{and} \quad p_n:= \frac{u_n}{u_{n-1}} - \frac{u_{n-1}}{u_{n-2}} \mbox{ for } n \geq 2.$$
If $p_\infty:= 1 - \sum_{i=1}^\infty p_i >0$, and $X_1, X_2, \ldots$ is the sequence of independent choices from this distribution on $\{1,2, \ldots, \infty\}$
used to drive the construction of $\Pi$, then the construction is terminated by assigning some arbitrarily distributed infinite component on $[n+1, \infty)$ following the last splitting time $n$ such that $X_1 + \cdots + X_n < \infty$, for instance by a shifting to $[n+1,\infty)$ the deterministic permutation of $\mathbb{N}_{+}$ with no finite components
$$
\cdots 6 \to 4 \to 2 \to 1 \to 3 \to 5 \to \cdots
$$
\end{corollary}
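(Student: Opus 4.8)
The plan is to realize the renewal process driven by a Kaluza sequence as the process of block-endpoint times of a suitable $p$-shifted permutation, and to read everything off from Proposition~\ref{pshiftedprop}(ii)--(iii). First I would rephrase the two hypotheses in \eqref{kaluza} in terms of the ratios $r_n := u_n/u_{n-1}$ (with $u_0 = 1$): log-convexity $u_n^2 \le u_{n-1}u_{n+1}$ says exactly that $(r_n)_{n \ge 1}$ is non-decreasing, and $r_n > 0$ is clear; moreover $r_n \le 1$ for every $n$, since $r_{n_0} = 1 + \varepsilon > 1$ would force $u_m \ge u_{n_0 - 1}(1+\varepsilon)^{m - n_0 + 1} \to \infty$, contradicting $u_m \le 1$. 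Hence $(u_n)$ is non-increasing and $(r_n)$ takes values in $(0,1]$. Now set $p_1 := u_1 = r_1$ and $p_n := r_n - r_{n-1} \ge 0$ for $n \ge 2$, so that $p_1 > 0$, the partial sums telescope to $\sum_{i=1}^n p_i = r_n$, and the total mass is $c := \sum_{i \ge 1} p_i = \lim_n r_n \in (0,1]$. Thus $p=(p_n)$ is a probability distribution on $\mathbb{N}_{+}$ when $c=1$ and a sub-probability with an atom $p_\infty := 1-c$ at $\infty$ when $c<1$. The crucial point is that for $\Pi$ a $p$-shifted permutation of $\mathbb{N}_{+}$ with this $p$, the product formula of Proposition~\ref{pshiftedprop}(ii) gives
\[
\mathbb{P}([n] \text{ is a block of } \Pi) \;=\; \prod_{j=1}^n \sum_{i=1}^j p_i \;=\; \prod_{j=1}^n r_j \;=\; \prod_{j=1}^n \frac{u_j}{u_{j-1}} \;=\; u_n .
\]

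When $c=1$ the proof is then immediate: Proposition~\ref{pshiftedprop}(iii) says such a $\Pi$ is strictly regenerative with a regeneration at each $n$ for which $[n]$ is a block, so the random set of such $n$ is the range of a (possibly null-recurrent) renewal process whose renewal sequence is precisely $(u_n)$ by the display above; hence $(u_n)$ is a renewal sequence, generated as claimed. For the transient case $c<1$ I would run the terminated construction in the statement: take $X_1, X_2, \ldots$ i.i.d.\ from $(p_1, p_2, \ldots, p_\infty)$, let $J := \min\{j : X_j = \infty\}$, which is a.s.\ finite, build $\Pi_1, \ldots, \Pi_{J-1}$ as in Definition~\ref{pshifted}, and attach the deterministic permutation with no finite components on $[n^{\ast}+1, \infty)$ past the last splitting time $n^{\ast}\le J-1$ of this partial construction. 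Since the deterministic tail has no finite components, every splitting time of $\Pi$ is $\le n^{\ast}$, so $[n]$ is a block of $\Pi$ only if $J > n$; conversely, on $\{J > n\}$ (an event of probability $c^n$ on which $X_1,\ldots,X_n$ are i.i.d.\ from the normalized law $p/c$) the first $n$ coordinates of $\Pi$ coincide with those of a $(p/c)$-shifted permutation, whence $\mathbb{P}([n] \text{ is a block} \mid J>n) = \prod_{j=1}^n (r_j/c) = u_n/c^n$, so $\mathbb{P}([n] \text{ is a block}) = u_n$ again. The same conditioning on $\{J>n\}$, together with the strict regeneration of $(p/c)$-shifted permutations from Proposition~\ref{pshiftedprop}(iii), shows that conditionally on $\{[n]\text{ is a block}\}$ the residual permutation $\Pi^n$ has the law of $\Pi^0$ and is independent of the past, so $\Pi$ is strictly regenerative; and since $r_n \to c < 1$ forces $\sum_n u_n < \infty$, this renewal process has only finitely many renewals a.s., so $(u_n)$ is a transient renewal sequence generated as claimed.

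For the bare assertion that $(u_n)$ is a renewal sequence one can also reduce $c<1$ to $c=1$ directly: the damped sequence $\bar u_n := c^{-n}u_n = \prod_{j\le n}(r_j/c)$ is again a Kaluza sequence, now with $\lim_n(r_n/c)=1$ (the ratios $r_n/c$ increase to $1$, hence stay $\le 1$, so $\bar u_n$ is non-increasing in $(0,1]$ and log-convexity is preserved), so by the case $c=1$ it is the renewal sequence of a process with $\bar U(z)=(1-\bar F(z))^{-1}$, and then $U(z)=\bar U(cz)=(1-\bar F(cz))^{-1}$ exhibits $(u_n)$ as a renewal sequence because $\bar F(cz)$ has non-negative coefficients summing to $\bar F(c)\le 1$. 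I expect the main obstacle to be the bookkeeping in the terminated construction: making precise that the deterministic tail introduces no spurious splitting times and that conditioning on $\{J>n\}$ cleanly restores a $(p/c)$-shifted structure, including the full regeneration property and not merely the one-dimensional marginals. Everything else is the telescoping identity $\prod_{j\le n}\sum_{i\le j}p_i = u_n$ combined with Proposition~\ref{pshiftedprop}(ii)--(iii).
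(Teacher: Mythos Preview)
Your proposal is correct and follows exactly the route the paper intends: the corollary is presented there without proof, as an immediate byproduct of Proposition~\ref{pshiftedprop}(ii)--(iii), and you have supplied precisely the telescoping identity $\prod_{j\le n}\sum_{i\le j}p_i = \prod_{j\le n}(u_j/u_{j-1}) = u_n$ together with the verification that the Kaluza conditions make $p$ a genuine (sub-)probability on $\mathbb{N}_+$. Your careful handling of the terminated construction when $p_\infty>0$, and the alternative damping reduction to the case $c=1$, go beyond what the paper spells out but are fully in its spirit.
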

See also \cite{Kendall,Horn,KingmanR,Shanbhag,Liggett,Fristedt} for other derivations and interpretations of Kaluza's result, 
all of which now aquire some expression in terms of $p$-shifted permutations.

Some further instances of regenerative permutations are provided by the following close relative of the $p$-shifted permutation:
\begin{definition}
\label{pbiased}
{\em
For $p$ with $p_i > 0 $ for every $i$, call a random permutation $\Pi$ of $\mathbb{N}_{+}$ a {\em $p$-biased permutation of $\mathbb{N}_{+}$} if the random sequence $(p_{\Pi_1}, p_{\Pi_2},  \ldots )$ is what is commonly called a {\em sized biased random permutation} of $(p_1, p_2, \ldots)$.
That is to say,  $(\Pi_1, \Pi_2, \ldots )$ is the sequence of distinct values, in order of appearance, of a random sample of positive integers $(X_1, X_2, \ldots)$, which are independent and identically distributed (i.i.d.)
with distribution $(p_1, p_2, \ldots)$. Inductively, let
\begin{itemize} 
\item 
$\Pi_1 := X_1$,  and $J_1 := 1$,
\item 
for $i \ge 2$, let $\Pi_i := X_{J_i}$, where $J_i$ is the least $j > J_{i-1}$ such that $$X_j  \in \mathbb{N}_{+} \setminus \{X_{\Pi_1}, X_{\Pi_2}, \cdots, X_{\Pi_{i-1} } \}.$$
\end{itemize}
}
\end{definition}

The procedure described in Definition \ref{pbiased} is an instance of sampling with replacement, or {\em successive sampling \cite{Rosen, Gordon}}.
See \cite{SH,Hoppe,Donnelly,PPY,PY97} for various studies of this model of size-biased permutation, with emphasis on the annealed model, where $p$ is determined by a random discrete distribution $P:= (P_1, P_2, \ldots)$, and given $P = p$, the $X_j$ are i.i.d. with distribution $p$.  
In particular, the joint distribution of the random injection $(\Pi_1, \ldots \Pi_n): [n] \rightarrow \mathbb{N}_{+}$ is 
\begin{equation}
\label{genbias}
\mathbb{P}(\Pi_i = \pi_i, \,1 \le i \le n) = \mathbb{E} \left(P_{\pi_1} \prod_{i=2}^{n} \frac{P_{\pi_i}}{1-\sum_{j=1}^{i-1} P_{\pi_j}} \right).
\end{equation}
for every fixed injection $(\pi_i, \, 1 \leq i \leq n): [n] \rightarrow \mathbb{N}_{+}$.
A tractable model of this kind, known as a {\em residual allocation model} ({\em RAM}), has the stick-breaking representation:
\begin{equation}
\label{ram}
P_i:= (1-W_1)  \cdots (1-W_{i-1}) W_i,
\end{equation}
with $0 < W_i < 1$ and the $W_i$'s are independent and identically distributed.
This model is of special interest for Bayesian non-parametric inference and machine learning \cite{BPJ1,BPJ2}.
In those contexts, the distribution of $P$ represents a prior distribution on the underlying probability model $p$, which may be updated in response to observations such as the values in the sample $(X_1, \ldots, X_n)$, or values of $(\Pi_1, \ldots, \Pi_n)$.
A model of particular interest arises when each $W_i$ has the beta$(1,\theta)$ density $\theta (1-w)^{\theta -1 }$ at $w \in (0,1)$ for some $0 < \theta < \infty$.
This distribution of $(P_1,P_2, \ldots)$ is known as the GEM$(\theta)$ distribution, after Griffiths, Engen and McCloskey who discovered the remarkable properties of this model, including McCloskey's result that the GEM$(\theta)$ model is the only RAM that is invariant under $P$-biased permutation, meaning that there is the equality in distribution
\begin{equation}
\label{isbp}
(P_{\Pi_1}, P_{\Pi_2}, \ldots ) \stackrel{(d)}{=}  (P_1 , P_2, \ldots )  \quad \mbox{for $\Pi$ a $P$-biased permutation of $\mathbb{N}_{+}$}.
\end{equation}
The following result reveals the regeneration of sized-biased random permutations of $\mathbb{N}_{+}$.

\begin{proposition}
\label{pbiasedprop}
For every residual allocation model \eqref{ram} for a random discrete distribution $P$ with i.i.d. residual factors $W_i$, and $\Pi$ a $P$-biased random permutation of $\mathbb{N}_{+}$:
\begin{enumerate}[$(i)$]
\item
The random permutation $\Pi$ is strictly regenerative, with regeneration at every $n$ such that $[n]$ is a block of $\mathbb{N}_{+}$, and the renewal sequence $(u_n;~n \geq 1)$ defined by
\begin{equation}
\label{PTa}
u_n : = \mathbb{P}([n] \mbox{ is a block of } \Pi) = \int_0^{\infty} e^{-x} \mathbb{E} \prod_{i=1}^n \left(1 -  \exp\left(-\frac{x W_i}{T_{i}} \right) \right) dx,
\end{equation}
where $T_{i}: = (1-W_1) \cdots (1-W_{i})$.
Then $\Pi$ is positive recurrent if 
\begin{equation}
\label{gencri}
\sum_{i = 2}^{\infty} \mathbb{E}  \exp\left(-\frac{x W_i}{T_{i}} \right) < \infty \quad \mbox{for some } x > 0.
\end{equation}
\item
If each $W_i$ is the constant $1-q$ for some $0 < q < 1$, so $P$ is the geometric$(1-q)$ distribution on $\mathbb{N}_{+}$, then $\Pi$ is positive recurrent. Hence $\Pi$ has all blocks finite and limiting frequencies of cycle counts as in \eqref{limfreqs}, and there is a stationary version $\Pi^{*}$ of $\Pi$ acting on $\mathbb{Z}$, called a $p$-biased random permutation of $\mathbb{Z}$.
\item
If the $W_i$ are i.i.d. beta$(1,\theta)$ for some $\theta >0$, so $P$ has the GEM$(\theta)$ distribution, then
$\Pi$ is positive recurrent, with the same further implications.
\end{enumerate}
\end{proposition}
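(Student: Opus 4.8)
The plan is to reduce the $P$-biased permutation to the Markovian \emph{discovery chain} underlying a size-biased permutation, and then to read off the renewal structure from it. Identify $\Pi=(\Pi_1,\Pi_2,\ldots)$ with the sequence $D=(D_1,D_2,\ldots)$ of distinct values of the i.i.d.\ sample $(X_j)$ in order of first appearance; since $P_i>0$ for all $i$ almost surely, every positive integer eventually occurs, so $D$ is a.s.\ a permutation of $\mathbb{N}_{+}$. Conditionally on $P$, $D$ is a Markov chain with $\mathbb{P}(D_1=i\mid P)=P_i$ and $\mathbb{P}(D_{k+1}=i\mid D_1,\ldots,D_k,P)=P_i/(1-P_{D_1}-\cdots-P_{D_k})$ for $i\notin\{D_1,\ldots,D_k\}$, and $[n]$ is a block of $\Pi$ precisely when $\{D_1,\ldots,D_n\}=[n]$, i.e.\ when boxes $1,\ldots,n$ are discovered before any box $>n$; thus $R_n=1(\{D_1,\ldots,D_n\}=[n])$ and $u_n=\mathbb{P}([n]\text{ is a block})$, with $u_1=\mathbb{E}(W_1)\in(0,1)$, so the renewal sequence is aperiodic. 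To prove $(i)$, fix $n$ with $u_n>0$. Conditionally on $P$ and on $\{[n]\text{ is a block}\}$, the Markov property of $D$ at the deterministic time $n$ shows that the post-$n$ chain is independent of $(D_1,\ldots,D_n)$ — because the depleted mass $1-P_{D_1}-\cdots-P_{D_n}$ equals $T_n$ for \emph{every} arrangement of $1,\ldots,n$ — and that $(D_{n+1}-n,D_{n+2}-n,\ldots)$ is the discovery chain of the renormalized, shifted distribution $(P_{n+i}/T_n)_{i\ge 1}$, which is exactly the residual allocation model with residual factors $(W_{n+1},W_{n+2},\ldots)$; note also that the residual renewal indicators $R_{n+k}$ coincide with the renewal indicators of $\Pi^n$. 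Integrating out $P$: both $\mathbb{P}([n]\text{ is a block}\mid P)$ and the conditional law of $(D_1,\ldots,D_n)$ given the block event depend only on $(W_1,\ldots,W_n)$, while the conditional law of the residual chain depends only on $(W_{n+1},W_{n+2},\ldots)$; since these two blocks of factors are independent and $(W_{n+1},W_{n+2},\ldots)\stackrel{(d)}{=}(W_1,W_2,\ldots)$, Bayes' rule gives that, conditionally on $\{[n]\text{ is a block}\}$, the residual permutation $\Pi^n$ is again a $P$-biased permutation (hence $\stackrel{(d)}{=}\Pi^0$ with zero delay) and is independent of $\sigma(D_1,\ldots,D_n)\supseteq\sigma(R_0,\ldots,R_n)$ — precisely Definition~\ref{regendef}.

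For the formula \eqref{PTa}, Poissonize: given $P$, attach to box $i$ an independent Poisson arrival process of rate $P_i$ with first arrival $E_i\sim\mathrm{Exp}(P_i)$ (the discovery time of box $i$), the $E_i$ being conditionally independent given $P$. Then $[n]$ is a block iff $\max_{i\le n}E_i<\min_{i>n}E_i=:M$, where $M\sim\mathrm{Exp}(T_n)$ is independent of $(E_1,\ldots,E_n)$ given $P$; conditioning on $M$ and using $\mathbb{P}(E_i<M\mid M,P)=1-e^{-P_iM}$ together with the substitution $x=T_nM$ gives
\[
u_n=\int_0^\infty e^{-x}\,\mathbb{E}\prod_{i=1}^n\bigl(1-e^{-xP_i/T_n}\bigr)\,dx .
\]
Reversing the finite i.i.d.\ block $(W_1,\ldots,W_n)$ turns $P_i/T_n=W_i\big/\bigl[(1-W_i)\cdots(1-W_n)\bigr]$ into $W_j/T_j$ (with $j=n+1-i$) without changing the law of the product, which is exactly \eqref{PTa}. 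Since each factor $1-e^{-xW_i/T_i}\in[0,1)$, the integrand is nonincreasing in $n$, so by monotone convergence $u_n\downarrow u_\infty:=\int_0^\infty e^{-x}\,\mathbb{E}\prod_{i=1}^\infty(1-e^{-xW_i/T_i})\,dx$. As $\prod_i(1-a_i)>0\iff\sum_i a_i<\infty$ for $a_i\in[0,1)$, condition \eqref{gencri} forces $\sum_i e^{-xW_i/T_i}<\infty$ a.s.\ for that $x$ (the summand has finite mean), hence $\prod_{i=1}^\infty(1-e^{-xW_i/T_i})>0$ a.s., and therefore $u_\infty>0$ (the integrand being nondecreasing in $x$). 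For an aperiodic renewal sequence $u_n\to 1/\mu$ with $\mu=\mathbb{E}(Y_1)$; combined with $u_n\downarrow u_\infty>0$ this forces $\mu=1/u_\infty<\infty$, i.e.\ $\Pi$ is positive recurrent.

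For $(ii)$, $W_i\equiv 1-q$ gives $T_i=q^i$, so $\sum_{i\ge 2}\mathbb{E}\,e^{-xW_i/T_i}=\sum_{i\ge 2}e^{-x(1-q)q^{-i}}<\infty$ because $q^{-i}$ grows geometrically and $e^{-cr^i}$ ($r>1$) decays faster than geometrically; moreover $P_i=(1-q)q^{i-1}$ is the geometric$(1-q)$ law. For $(iii)$, $1-W_j$ has density $\theta w^{\theta-1}$, so conditioning on $T_{i-1}=t$ (which is independent of $W_i$) and substituting $v=W_i/(1-W_i)$ gives $\mathbb{E}\bigl(e^{-xW_i/T_i}\mid T_{i-1}=t\bigr)=\theta\int_0^\infty e^{-xv/t}(1+v)^{-\theta-1}\,dv\le\theta t/x$, hence $\mathbb{E}\,e^{-xW_i/T_i}\le\frac{\theta}{x}\,\mathbb{E}(T_{i-1})=\frac{\theta}{x}\bigl(\frac{\theta}{\theta+1}\bigr)^{i-1}$, which is summable in $i$; so \eqref{gencri} holds for every $x>0$. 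In both cases positive recurrence yields, through the standard renewal theory recalled in Section~\ref{s3} (the discrete renewal theorem, $\lim u_n=1/\mu$, and the explicit construction of the stationary two-sided version), that all blocks of $\Pi$ are finite, the cycle counts have the limiting frequencies \eqref{limfreqs}, and a stationary $\Pi^*$ on $\mathbb{Z}$ exists. The delicate point is $(i)$: one must verify that conditioning on $\{[n]\text{ is a block}\}$ leaves the residual chain a genuine fresh copy of the model and independent of the past, which works only because the Markov property collapses all dependence of the future on the discovery order of $1,\ldots,n$ (the depleted mass being always $T_n$) while the i.i.d.\ structure of the $W_i$ confines the Bayesian reweighting to $W_1,\ldots,W_n$; and one must not overlook the reversal identity needed to reach the precise form \eqref{PTa}.
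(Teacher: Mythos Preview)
Your proof is correct and, for parts $(i)$ and $(ii)$, follows essentially the same route as the paper: the exponential representation you call Poissonization is exactly the content of the Perman--Pitman--Yor lemma the paper invokes (writing $E_i=\varepsilon_i/P_i$ with $\varepsilon_i$ standard exponential and $\min_{i>n}E_i\stackrel{(d)}{=}\varepsilon/T_n$), and the reversal $(W_1,\ldots,W_n)\stackrel{(d)}{=}(W_n,\ldots,W_1)$ is used identically to pass from the intermediate integral to \eqref{PTa}. You also spell out the strict regeneration more carefully than the paper, which merely says it ``follows easily from the stick breaking property of RAM models''; your discovery-chain formulation makes explicit why the depleted mass being $T_n$ for \emph{every} ordering of $[n]$, together with the independence of the blocks $(W_1,\ldots,W_n)$ and $(W_{n+1},W_{n+2},\ldots)$, is what confines the Bayesian reweighting to the past.

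For part $(iii)$ your argument is genuinely simpler than the paper's. The paper conditions on $T_{i-1}$, then invokes Ignatov's description of GEM$(\theta)$ as a Poisson point process on $(0,1)$ with intensity $\theta(1-u)^{-1}\,du$ to rewrite the expectation over $T_{i-1}$, and finally reduces the resulting double integral to the exponential integral $E_1(x)=\int_x^\infty u^{-1}e^{-u}\,du$ with its logarithmic blow-up at $0$. Your direct bound---substituting $v=W_i/(1-W_i)$, using $(1+v)^{-\theta-1}\le 1$ to get $\mathbb{E}\bigl(e^{-xW_i/T_i}\mid T_{i-1}\bigr)\le\theta T_{i-1}/x$, and then summing the geometric series $\mathbb{E}(T_{i-1})=\bigl(\theta/(\theta+1)\bigr)^{i-1}$---avoids this machinery entirely and gives a cleaner, self-contained verification of \eqref{gencri}. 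The price is that your bound is cruder (you discard the $(1+v)^{-\theta-1}$ factor), but since only summability is at stake, nothing is lost.
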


Propositions \ref{pshiftedprop} and \ref{pbiasedprop} expose a close affinity between $p$-shifted and $p$-biased permutations of $\mathbb{N}_{+}$, at least for some choices of $p$, which does not seem to have been previously recognized. 
For instance, if $p$ is such that $p_1$ is close to $1$, and subsequent terms decrease rapidly to $0$, then it is to be expected in either of these models that $\Pi$ should be close in some sense to the identity permutation on $\mathbb{N}_{+}$.
This intuition is confirmed by the explicit formulas described in Section \ref{s6} both for the one parameter family of geometric$(1-q)$ distributions as $q \downarrow 0$, and for the GEM$(\theta)$ family as $\theta \downarrow 0$.
This behavior is in sharp contrast to the case if $\Pi$ is a uniformly distributed permutation of $[n]$, where it is well known that the expected number of fixed points of $\Pi$ is $1$, no matter how large $n$ may be.
See also Gladkich and Peled \cite{GP} for many finer asymptotic results for the Mallows$(q)$ model of permutations of $[n]$, as both $n \to \infty$ and $q \downarrow 0$.

With further analysis, we derive explicit formulas for $u_{\infty}$ of the GEM$(\theta)$-biased permutations in Section \ref{s7}. 
But there does not seem to be any simple formula for $u_\infty$ of a $P$-biased permutation with $P$ a general RAM, and the condition \eqref{gencri} for positive recurrence is not easy to check.
Nevertheless, we give a simple sufficient condition for a $P$-biased permutation of $\mathbb{N}_{+}$ with $P$ governed by a RAM to be positive recurrent.

\begin{proposition}
\label{simplecond}
Let $\Pi$ be a $P$-biased permutation of $\mathbb{N}_{+}$ for $P$ a RAM with i.i.d. residual factors $W_i \stackrel{(d)}{=} W$. If the distribution of $- \log(1-W) < \infty$ is non-lattice, meaning that the distribution of $1-W$ is not concentrated on a geometric progression, and 
\begin{equation}
\mathbb{E} [- \log W ]< \infty \quad \mbox{and} \quad \mathbb{E} [- \log(1-W)] < \infty,
\end{equation}
 then $\Pi$ is positive recurrent regenerative permutation.
\end{proposition}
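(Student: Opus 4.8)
The plan is to verify positive recurrence through the renewal sequence $u_n=\mathbb P([n]\text{ is a block of }\Pi)$ supplied by Proposition~\ref{pbiasedprop}$(i)$. Since $u_1=\mathbb P(\Pi_1=1)=\mathbb E[P_1]=\mathbb E[W_1]>0$, the renewal process of block endpoints is aperiodic, so by the discrete renewal theorem \cite{Feller} one has $u_n\to 1/\mu$ (with the convention $1/\mu:=0$ when $\mu:=\mathbb E(Y_1)=\infty$); consequently $\Pi$ is positive recurrent if and only if $u_\infty:=\lim_n u_n>0$, and it suffices to prove the latter.

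First I would recast the integral formula \eqref{PTa} in a form adapted to renewal theory. Writing $\mathbb E[h(E^\ast)]=\int_0^\infty e^{-x}h(x)\,dx$ for an $\mathrm{Exp}(1)$ variable $E^\ast$, and introducing i.i.d. $\mathrm{Exp}(1)$ variables $E_1,E_2,\dots$ independent of $(W_i)$ and of $E^\ast$, the factor $1-\exp(-xW_i/T_i)$ equals $\mathbb P(E_i<xW_i/T_i\mid\cdot)$. Setting $S_i:=\sum_{k=1}^i\bigl(-\log(1-W_k)\bigr)$ (so $T_i=e^{-S_i}$) and $\zeta_i:=\log E_i-\log W_i$, one gets after taking logarithms
\[
u_n=\mathbb P\bigl(M_n<\log E^\ast\bigr),\qquad M_n:=\max_{1\le i\le n}\bigl(\zeta_i-S_i\bigr).
\]
Here $(S_i)_{i\ge0}$ is a zero-delay renewal process on $(0,\infty)$ whose increment $-\log(1-W)$ is non-lattice by hypothesis and has mean $\beta:=\mathbb E[-\log(1-W)]\in(0,\infty)$, while the $\zeta_i$ are i.i.d. with $\mathbb E\zeta_1^{+}<\infty$, since $\mathbb E(\log E_1)^{+}<\infty$ and $\mathbb E[-\log W]<\infty$. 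As $M_n\uparrow M_\infty:=\sup_{i\ge1}(\zeta_i-S_i)\in(-\infty,\infty]$ and $\mathbb P(\log E^\ast>t)=e^{-e^{t}}$, monotone convergence gives $u_\infty=\mathbb E\bigl[e^{-e^{M_\infty}}\bigr]$ (with $e^{-e^{\infty}}:=0$), so everything reduces to showing $M_\infty<\infty$ almost surely.

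This last point is a Borel--Cantelli argument in which the two logarithmic-moment hypotheses enter. By the strong law of large numbers $S_i/i\to\beta>0$ a.s., so a.s. there is a finite $N$ with $S_i\ge\tfrac{\beta}{2}i$ for all $i\ge N$. Because $\mathbb E\zeta_1^{+}<\infty$, the events $\{\zeta_i\ge\tfrac{\beta}{2}i\}$ have summable probabilities, so a.s. there is a finite $N'$ with $\zeta_i<\tfrac{\beta}{2}i$ for all $i\ge N'$; hence $\zeta_i-S_i<0$ for $i\ge\max(N,N')$, and $M_\infty$ is the maximum of this a.s. negative tail together with the finitely many a.s. finite terms $\zeta_i-S_i$, $i<\max(N,N')$. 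Thus $M_\infty<\infty$ a.s., $u_\infty>0$, and $\Pi$ is a positive recurrent regenerative permutation.

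The genuine obstacle is precisely this interplay: $S_i$ drifts to $+\infty$ linearly thanks to $\mathbb E[-\log(1-W)]<\infty$, but $\zeta_i$ can be large when $W_i$ is close to $0$, and $\mathbb E[-\log W]<\infty$ is exactly what prevents those excursions from accumulating; everything else is bookkeeping around the formula \eqref{PTa}. The non-lattice hypothesis makes $(S_i)$ a bona fide non-lattice renewal process, so that beyond the bare positive-recurrence conclusion the key renewal theorem of \cite{Feller} and the regenerative-process machinery of \cite{Asmussen} apply verbatim (e.g. to identify $u_\infty=1/\mu$ and to construct the stationary two-sided version on $\mathbb Z$).
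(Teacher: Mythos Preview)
Your argument is correct. The rewriting of \eqref{PTa} as $u_n=\mathbb P(M_n<\log E^{\ast})$ with $M_n=\max_{1\le i\le n}(\zeta_i-S_i)$ is accurate, the monotone passage to $u_\infty=\mathbb E[e^{-e^{M_\infty}}]$ is justified, and the SLLN/Borel--Cantelli step correctly uses $\mathbb E[-\log(1-W)]<\infty$ for the linear drift of $S_i$ and $\mathbb E[-\log W]<\infty$ for $\mathbb E\zeta_1^{+}<\infty$. Your observation that $u_1=\mathbb E W_1>0$ forces aperiodicity, so that $u_\infty>0$ translates via the discrete renewal theorem into $\mu<\infty$, is also fine.

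The paper takes a genuinely different route. Instead of working directly with \eqref{PTa}, it passes through the Markov chain $\widehat Q$ of Section~\ref{s7} (a Yule process sampled at the renewal times $\tau_j=\sum_{i\le j}(-\log(1-W_i))$), identifies $u_\infty$ with $\mathbb P_1(\widehat Q_0<\widehat Q_1<\cdots)$, and then invokes \cite[Theorem~3.3]{Gsmall} on the Bernoulli sieve: under the non-lattice hypothesis and both logarithmic moment conditions, the number $L_n$ of empty boxes to the left of the rightmost occupied box converges to a finite limit $L_\infty$ with finite mean $\mathbb E[-\log W]/\mathbb E[-\log(1-W)]$. A last-exit decomposition of the $\widehat Q$ chain then converts $\mathbb P(L_\infty<\infty)=1$ into $u_\infty=\mathbb P_1(L_\infty=0)>0$.

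Your approach is more elementary and self-contained: it avoids the $\widehat Q$ machinery and the external citation, and in fact shows that the non-lattice hypothesis is not needed for the bare positive-recurrence conclusion (you correctly flag it as relevant only to the further renewal-theoretic consequences). The paper's route, by contrast, situates the result within the Bernoulli sieve framework and the $\widehat Q$ chain, which is what enables the explicit evaluations such as $u_\infty\stackrel{\theta}{=}\Gamma(\theta+2)\Gamma(\theta+1)/\Gamma(2\theta+2)$ later in Section~\ref{s7}; the occupation-count representation $L_\infty=\sum_j(G_j-1)_+$ is the bridge to those computations.
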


\bigskip

{\bf Organization of the paper:} The rest of this paper is organized as follows.
\begin{itemize}
\item
Section \ref{s2} sets the stage by recalling some basic properties of indecomposable permutations of a finite interval of integers, which are the basic building blocks of regenerative permutations. 
\item
Section \ref{s3} indicates how the construction of a stationary random permutation of $\mathbb{Z}$ along with some limit theorems is a straightforward application of the well established theory of regenerative random processes.
\item
Section \ref{s4} provides an example of the regenerative permutation of $\mathbb{N}_{+}$, with uniform block distribution. Some explicit formulas are given.
\item
Section \ref{s5} sketches a proof of Proposition \ref{pshiftedprop} for $p$-shifted permutations, following the template provided by \cite{BB} in the particular case of the limiting Mallows$(q)$ models.
\item
Section \ref{s6} gives a proof of Proposition \ref{pbiasedprop} for $P$-biased permutations. 
This is somewhat trickier, and the results are less explicit than in the $p$-shifted case.  
\item
Section \ref{s7} provides further analysis of regenerative $P$-biased permutations. 
There Proposition \ref{simplecond} is proved.
We also show that the limiting renewal probability of the GEM$(1)$-biased permutation is $1/3$. 
\end{itemize}

\bigskip

{\bf Acknowledgement:} 
We thank David Aldous, Persi Diaconis, Marek Biskup and Sasha Gnedin for various pointers to the literature.
Thanks to Jean-Jil Duchamps for an insightful first proof of our earlier conjecture that $u_\infty = 1/3$.
We also thank an anonymous referees for his careful reading and valuable suggestions.
\section{Indecomposable permutations}
\label{s2}

This section provides references to some basic combinatorial theory of indecomposable permutations of $[n]$ which may arise as the reduced permutations of $\Pi$ on its components of finite length.
For $1 \le k \le n$, let $(n,k)^{\dagger}$ be the number of permutations of $[n]$ with exactly $k$ components.
In particular, $(n,1)^{\dagger}:= \#  \mathfrak{S}^{\dagger}_n$ is the number of indecomposable permutations of $[n]$, as the sequence A003319 of OEIS.
As shown by Lentin \cite{Lentin} and Comtet \cite{Comtet72}, the counts $((n,1)^{\dagger};~ n \geq 1)$, starting from $(1,1)^{\dagger} = 1$, 
are determined by the recurrence
\begin{equation}
\label{indecomprec}
n! = \sum_{k = 1}^n (k,1)^{\dagger} (n-k)! ,
\end{equation}
which enumerates permutations of $[n]$ according to the size $k$ of their first component.
Introducing the formal power series which is the generating function of the sequence $(n!;~n \geq 0)$
$$
G(z):= \sum_{n=0}^\infty n! \, z^n,
$$
the recursion \eqref{indecomprec} gives the generating function of the sequence $((n,1)^{\dagger};~ n \geq 1)$, as
\begin{equation}
\label{indecompn1}
\sum_{n=1}^\infty (n,1)^{\dagger} z^n = 1 - \frac{1}{G(z)},
\end{equation}
which implies that
\begin{equation}
\label{asymf}
(n,1)^{\dagger} = n! \left(1-\frac{2}{n} + O\left(\frac{1}{n^2}\right) \right).
\end{equation}
Furthermore, it is derived from \eqref{indecompn1} that
\begin{equation}
\label{indecompnk}
\sum_{n=k}^\infty (n,k)^{\dagger} z^n = \left( 1 - \frac{1}{G(z)} \right)^k \quad \mbox{for}~1 \leq k \leq n.
\end{equation}
The identity \eqref{indecompnk} determines the triangle of numbers $(n,k)^{\dagger}$ for $1 \leq k \leq n$, as displayed for $1 \leq n \leq 10$ in Comtet \cite[Exercise VI.14]{Comtet74}. See also \cite{King,Cori1,Cori2,Cori3,AP13,BR16} for various results about indecomposable permutations.

Recall that for $I \subseteq \mathbb{Z}$ an interval of integers, and $\Pi: I \rightarrow I$ a permutation of $I$, we say $\Pi$ splits at $n \in I $, if $\Pi$ maps $I \cap (-\infty,n]$ onto itself. 
As observed by Stam \cite{Stam}, the splitting times of a uniform random permutation $\Pi$ of a finite interval of integers $I = [a,b]$ are {\em regenerative} in the sense that conditionally given that $\Pi$ splits at some $n \in I$ with $a \le n < b$, the restrictions of $\Pi$ to $[a,n]$ and to $[n+1,b]$ are independent uniform random permutations of these two subintervals of $I$.
However, for a uniform random permutation $\Pi$ of a finite interval, the components of $\Pi$ turn out not to be very interesting.
In fact, for a large finite interval of integers $I$, most permutations of $I$ have only one component. 
Assuming for simplicity that $I = [n]$,  let 
$$
V_n := \sum_{k = 1}^n 1 ( \Pi  \mbox{ splits at } k ),
$$
be the number of interval components of $\Pi$, a uniformly distributed random permutation of $[n]$.
It is easily seen from \eqref{asymf} that $\mathbb{P}(V_n = 1) \to 1$ as $n \to \infty$.
By an obvious enumeration
$$
\mathbb{E} V_n := \sum_{k = 1}^n \mathbb{P} ( \Pi  \mbox{ splits at } k ) = \sum_{k = 1}^n \frac{ k! (n-k)! } {n!}  = \Sigma_n - 1,
$$
where
$$
\Sigma_n:= \sum_{k=0}^n {n \choose k }  ^{-1},
$$
is the sum of reciprocals of binomial coefficients. 
The sum $\Sigma_n$, as the sequence A046825 of OEIS, has been studied in a number of articles \cite{Rockett,Sury}, with some other interpretations of the sum given in \cite{OEIS}.

The following lemma records some basic properties of the decomposition of a uniform permutation $\Pi$ of $[n]$.

\begin{lemma}
Let $\Pi$ be a uniformly distributed random permutation of $[n]$. Then:
\begin{enumerate}[$(i)$]
\item
The number $K_n$ of components of $\Pi$ has distribution 
\begin{equation}
\mathbb{P}(K_n = k) = \frac{(n,k)^{\dagger}}{ n!} \quad \mbox{for}~ 1 \le k \le n, 
\end{equation}
with the counts $(n,k)^{\dagger}$ determined as above.
\item
Conditionally given $K_n = k$, the random composition of $n$ defined by the lengths $L_{n,1}, \ldots, L_{n,k}$ of these components has the exchangeable
joint distribution
\begin{equation}
\mathbb{P}( L_{n,1} = n_1, \ldots, L_{n,k} = n_k \,|\, K_n = k) = \frac{1}{(n,k)^{\dagger}} \prod_{i=1}^k (n_i,1)^{\dagger},
\end{equation}
for all compositions $(n_1, \ldots, n_k)$ of $n$ with $k$ parts, meaning $n_i \ge 1$ and $\sum_{i=1}^k n_i = n$.
\item
The unconditional distribution of the length $L_{n,1}$ of the first component of $\Pi$ is given by 
\begin{equation}
\mathbb{P}(L_{n,1} = \ell ) = \frac{ (\ell,1)^{\dagger} (n- \ell)! } { n!}  \quad \mbox{for}~1 \le \ell \le n ,
\end{equation}
while the conditional distribution of $L_{n,1}$ given that $K_n = k$ is given by 
\begin{equation}
\mathbb{P}(L_{n,1} = \ell \,|\, K_n = k) = \frac{(\ell,1)^{\dagger} (n-\ell,k-1)^{\dagger} } { (n,k)^{\dagger}} \quad \mbox{for} ~1 \le \ell \le n,
\end{equation}
with the convention that $(0,0)^{\dagger} = 1$ but otherwise $(n,k)^{\dagger} = 0$ unless $1 \le k \le n$.
\item
The distribution of the length $L_{n}^{*}$ of a size-biased random component of $\Pi$, such as the length of the component of $\Pi$ containing $U_n$, where $U_n$ is independent of $\Pi$ with uniform distribution on $[n]$, is given by the formula
\begin{equation}
\label{SBCmp}
\mathbb{P}( L_n^{*} = \ell ) = \frac{ \ell \,  (\ell,1)^{\dagger}  } { n \cdot n! } \sum_{ k = 1} ^n k \, (n-l , k-1)^{\dagger},
\end{equation}
with the same convention.
\end{enumerate}
\end{lemma}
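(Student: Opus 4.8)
The plan is to reduce all four parts to the single structural fact recalled at the start of Section~\ref{s2}: a permutation $\pi$ of $[n]$ decomposes uniquely into consecutive interval components $I_1 < \cdots < I_k$, on each of which $\pi$ acts as an indecomposable permutation of the reduced block. Consequently, for any composition $(n_1,\dots,n_k)$ of $n$, the permutations of $[n]$ whose ordered component lengths are exactly $(n_1,\dots,n_k)$ are in bijection with $k$-tuples $(\sigma_1,\dots,\sigma_k)$ of indecomposable permutations $\sigma_i$ of $[n_i]$; there are $\prod_{i=1}^k (n_i,1)^{\dagger}$ of them, and summing over all compositions of $n$ with $k$ parts recovers $(n,k)^{\dagger}$ by the generating-function identity \eqref{indecompnk}. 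Everything below is bookkeeping on top of this bijection.

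Given this, part $(i)$ is immediate: $\mathbb{P}(K_n = k)$ is the number of permutations of $[n]$ with $k$ components divided by $n!$, which is $(n,k)^{\dagger}/n!$ by the definition of $(n,k)^\dagger$. For part $(ii)$, the bijection gives directly
$\mathbb{P}(L_{n,1}=n_1,\dots,L_{n,k}=n_k,\ K_n=k) = \prod_{i=1}^k (n_i,1)^{\dagger}/n!$;
dividing by the probability from $(i)$ yields the stated conditional law, and its exchangeability is manifest from the product form (the apparent $(n,n_i)^\dagger$ in the displayed formula should read $(n_i,1)^\dagger$). For part $(iii)$ I would enumerate directly: the first component of $\pi$ has length $\ell$ exactly when the reduced block of $\pi$ on $[\ell]$ is one of the $(\ell,1)^{\dagger}$ indecomposable permutations and $\pi$ restricts arbitrarily to $[\ell+1,n]$, giving $(\ell,1)^{\dagger}(n-\ell)!$ permutations and hence the unconditional formula; intersecting instead with $\{K_n=k\}$ forces the restriction to $[\ell+1,n]$ to have exactly $k-1$ components, giving $(\ell,1)^{\dagger}(n-\ell,k-1)^{\dagger}$ and, after division by $(i)$, the conditional formula. (Both formulas are also consistent with the joint law in $(ii)$ and with the recursion \eqref{indecomprec}.)

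Part $(iv)$ is where a little more care is needed. Conditioning on $\Pi$, the independent uniform point $U_n$ falls into any given component of length $\ell$ with probability $\ell/n$, so $\mathbb{P}(L_n^{*}=\ell \mid \Pi) = (\ell/n)\,M_\ell(\Pi)$, where $M_\ell(\Pi)$ is the number of length-$\ell$ components of $\Pi$; hence $\mathbb{P}(L_n^{*}=\ell) = (\ell/n)\,\mathbb{E}\,M_\ell(\Pi) = \ell\,N_\ell/(n\cdot n!)$, where $N_\ell$ is the number of pairs $(\pi,C)$ with $C$ a length-$\ell$ component of $\pi$. To evaluate $N_\ell$, split each such $\pi$ around the marked block $C$: what precedes $C$ is an arbitrary permutation of some initial interval $[a]$ with $0 \le a \le n-\ell$ (which automatically makes $a$ a splitting time, so $C$ genuinely begins a new component), $C$ is one of $(\ell,1)^{\dagger}$ indecomposable permutations, and what follows is an arbitrary permutation of the remaining $n-\ell-a$ points, so $N_\ell = (\ell,1)^{\dagger}\sum_{a=0}^{n-\ell} a!\,(n-\ell-a)!$. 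It then remains to identify $\sum_{a=0}^{m} a!(m-a)!$, for $m=n-\ell$, with $\sum_{k=1}^{n} k\,(m,k-1)^{\dagger}$: rewriting the latter as $\sum_{j}(j+1)(m,j)^{\dagger} = \sum_j j\,(m,j)^{\dagger} + m!$ (using $\sum_j (m,j)^{\dagger} = m!$), and recognizing $\sum_j j\,(m,j)^{\dagger} = m!\,\mathbb{E}K_m = m!\,\mathbb{E}V_m = \sum_{k=1}^m k!(m-k)!$ from the computation of $\mathbb{E}V_m$ already made in this section, the two sums agree after the evident reindexing, with the convention $(0,0)^{\dagger}=1$ covering the degenerate case $\ell = n$. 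I expect the only real hazard in the whole argument to be keeping these boundary conventions straight in part $(iv)$; the underlying combinatorics is the same component decomposition used throughout.
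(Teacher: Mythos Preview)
Your treatment of parts $(i)$--$(iii)$ matches the paper's: both simply read off the probabilities from the component decomposition recalled earlier in the section, and you are right that the displayed $(n,n_i)^\dagger$ in part $(ii)$ is a typo for $(n_i,1)^\dagger$.

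For part $(iv)$ you take a genuinely different route. The paper conditions on $K_n = k$ and uses the exchangeability from $(ii)$ to write
\[
\mathbb{P}(L_n^{*}=\ell \mid K_n=k)=\sum_{j=1}^k \mathbb{P}(\text{pick }L_{n,j}\text{ and }L_{n,j}=\ell \mid K_n=k)=k\,\frac{\ell}{n}\,\mathbb{P}(L_{n,1}=\ell \mid K_n=k),
\]
then substitutes the conditional formula from $(iii)$ and sums over $k$; the $(n,k)^\dagger$ factors cancel and \eqref{SBCmp} drops out immediately. Your argument instead computes $\mathbb{E}\,M_\ell(\Pi)$ by decomposing a permutation around a marked length-$\ell$ component, arriving at $(\ell,1)^\dagger\sum_{a=0}^{n-\ell} a!\,(n-\ell-a)!$, and then must separately verify the identity $\sum_{a=0}^{m}a!\,(m-a)!=\sum_{k\ge 1}k\,(m,k-1)^\dagger$ via the computation of $\mathbb{E}\,V_m$ from earlier in the section; your check of this identity (splitting off the $j=0$ term with the $(0,0)^\dagger=1$ convention, and using $\sum_j(m,j)^\dagger=m!$ together with $\sum_j j\,(m,j)^\dagger=m!\,\mathbb{E}\,V_m$) is correct. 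Both proofs are valid. The paper's is shorter and makes the dependence on $(ii)$ and $(iii)$ explicit, while yours yields as a byproduct a clean closed form for $\mathbb{E}\,M_\ell(\Pi)$ that the paper does not isolate.
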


\begin{proof}
The first three parts are just probabilistic expressions of the preceding combinatorial discussion. 
Then part $(iv)$ follows from the definition of the size-biased pick, using
$$
\mathbb{P}( L_n^{*} = \ell ) = \sum_{k=1}^n \mathbb{P}( L_n^{*} = \ell \,|\, K_n = k) \mathbb{P}(K_n = k).
$$
Given that $K_n=k$ let the lengths of these $k$ components listed from left to right be $L_{n,1}, \ldots , L_{n,k}$,
\begin{align*}
\mathbb{P}(L_n^{*}  = \ell \,|\, K_n = k) &= \sum_{j=1}^k \mathbb{P}(\mbox{pick } L_{n,j} \mbox{ and } L_{n,j} = \ell \,|\, K_n = k) \\
&= k \mathbb{P}( \mbox{pick } L_{n,1} \mbox{ and } L_{n,1} = \ell \,|\, K_n = k ) \\
&= k \frac{ \ell }{n} \mathbb{P}( L_{n,1} = \ell \,|\, K_n = k),
\end{align*}
where the second equality is obtained by exchangeability. 
Now part (iv) follows by plugging in the formulas in previous parts.
\end{proof}

Table of $n\cdot n! \, \mathbb{P}(L_n^{*} = \ell)$ for $1 \le \ell \le n \le 7$:
\begin{center}
\begin{tabular}{ c | c  c  c  c  c  c  c  c }
\multicolumn{1}{l}{$n$} &&&&&&&\\\cline{1-1} 
1 &1&&&&&&& \\
2 &2&2&&&&&&\\
3 &5&4&9&&&&&\\
4 &16 &10&18&52&&&&\\
5 &64&32&45&104&355&&&\\
6 &312&128&144&260&710&2766&&\\
7 &1812&624&576&832&1775&5532&24129&\\\hline
\multicolumn{1}{l}{} &0&1&2&3&4&5&6 & ~$\ell$\\
\end{tabular}
\end{center}
\section{Regenerative and stationary permutations}
\label{s3}

This section elaborates on the structure of a regenerative permutation of $\mathbb{N}_{+}$, and its stationary version $\Pi^{*}$ acting on $\mathbb{Z}$.
To provide some intuitive language for discussion of a permutation $\Pi$  of $I= \mathbb{N}_{+}$ or of $I = \mathbb{Z}$, it is convenient to regard $\Pi$ as describing a motion of balls labeled by $I$. 
Initially, for each $i \in I$, ball $i$ lies in box $i$. After the action of $\Pi$, 
\begin{itemize}
\item ball $i$ from box $i$ is moved to box $\Pi_i$;
\item box $j$ contains the ball initially in box $\Pi^{-1}_j$.
\end{itemize}

For $i \in I$ let $D_i:= \Pi_i - i$, the displacement of ball initially in box $i$. 
It follows easily from Definition \ref{regendef} that if $\Pi$ is a regenerative permutation of $\mathbb{N}_{+}$, then the process $(D_{n};~ n \geq 1)$ is a {\em regenerative process with embedded delayed renewal process} $(T_k;~ k \geq 0)$.
This means that if $R_n:= \sum_{k=0}^\infty 1(T_k = n)$ is the $n^{th}$ renewal indicator variable, then for each 
$n$ such that $\mathbb{P}(R_n = 1) >0$,  conditionally given the event $\{R_n=1\}$,
\begin{enumerate}[$(i).$]
\item 
there is the equality of finite dimensional joint distributions
$$
 ( ( D_{n+j}, R_{n+j});~ j \geq 1 ) \stackrel{(d)}{=} ( ( D^0_{j}, R^0_{j});~ j \geq 1 ), 
$$
where the $D^0_{j}:= \Pi^0_{j} - j$ are the displacements of the random permutation $\Pi^0$ of $\mathbb{N}_{+}$, with associated renewal indicators $R^0_1, R^0_2, \ldots$ with zero delay.
\item
the bivariate process $( ( D_{n+j}, R_{n+j});~ j \geq 1 )$ is independent of $(R_1, \ldots, R_n)$.
\end{enumerate}
This paraphrases the discrete case of the general definition of a regenerative process proposed by
Asmussen \cite[Chapter VI]{Asmussen}, and leads to the following Lemma.

\begin{lemma}
\label{Asmlem}
\cite[Chapter VI, Theorem 2.1]{Asmussen}.
Let $(D_{n};~n \geq 1)$ be a regenerative process with embedded delayed renewal process, $(T_k;~k \geq 0)$,
in the sense indicated above.  
Assume that the renewal process is positive recurrent with finite mean recurrence time
$\mu:= \mathbb{E}(Y_1) < \infty$, where $Y_1:= T_1 - T_0$, and that the distribution of $Y_1$ is aperiodic.
Then there is the convergence in total variation of distributions of infinite sequences
$$
(D_n, D_{n+1}, \ldots ) \stackrel{t.v.}{\longrightarrow} (D^*_0, D^*_1, \ldots),
$$
where $(D^*_z;~ z \in \mathbb{Z})$ is a two-sided stationary process, whose law is uniquely determined by the block formula
\begin{equation}
\label{blockformula}
\mathbb{E} g( D^*_z, D^*_{z+1}, \ldots) = \frac{1}{\mu}\mathbb{E} \left( \sum_{k = 1}^\infty g(D_k, D_{k+1}, \ldots) 1 ( Y_1  \ge k) \right),
\end{equation}
for all $z \in \mathbb{Z}$ and all non-negative product measurable functions $g$.
\end{lemma}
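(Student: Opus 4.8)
The plan is to deduce the statement from the classical convergence theorem for discrete-time regenerative processes, which in the paper is subsumed under the cited \cite[Chapter VI, Theorem 2.1]{Asmussen}; I sketch the argument in three stages. First I would record the \emph{regenerative decomposition}: writing $Y_k := T_k - T_{k-1}$ for $k \ge 1$ and letting the $k$th cycle be the finite string $C_k := (D_{T_{k-1}+1}, \dots, D_{T_k})$, the regenerative property of Definition \ref{regendef} applied successively at $T_1, T_2, \dots$ shows that the pairs $(Y_k, C_k)$, $k \ge 1$, are i.i.d.\ copies of a generic pair $(Y, C)$, where $Y = Y_1^0$ is the first renewal epoch of $\Pi^0$ and $C = (D_1^0, \dots, D_Y^0)$ with $D_j^0 := \Pi_j^0 - j$, and that this i.i.d.\ sequence is independent of the initial delay block $(T_0, D_1, \dots, D_{T_0})$. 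In particular $(D_n;~n \ge 1)$ is recovered by concatenating the delay block with $C_1, C_2, \dots$.

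Second I would construct the candidate limit and verify \eqref{blockformula}. Since $\mu = \mathbb{E}(Y) < \infty$, the size-biased law $\mathbb{P}\big((\hat Y, \hat C) \in \cdot\,\big) := \mu^{-1}\,\mathbb{E}\big[\,Y\, 1\big((Y, C) \in \cdot\,\big)\big]$ is a genuine probability law; given $\hat Y$, let $V$ be uniform on $\{1, \dots, \hat Y\}$, and independently let $C_1^*, C_2^*, \dots$ be i.i.d.\ copies of $C$. Define $(D_z^*;~z \ge 0)$ as the string obtained by deleting the first $V-1$ coordinates of $\hat C$ and then appending $C_1^*, C_2^*, \dots$, and extend to $z < 0$ by the obvious two-sided version, namely a two-sided i.i.d.\ sequence of cycles with the origin placed uniformly inside an independent size-biased cycle. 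Unwinding this definition — conditioning on $\hat Y$ and $\hat C$, and using the size-biasing identity $\mathbb{E}[\hat Y^{-1} h(\hat Y, \hat C)] = \mu^{-1}\mathbb{E}[h(Y, C)]$ together with the fact that a tail $(D_v^0, D_{v+1}^0, \dots)$ with $v \le Y_1^0$ consists of the tail of the first cycle followed by fresh i.i.d.\ cycles — reproduces \eqref{blockformula}. Running the same recipe from $D_{z+1}^*$ in place of $D_z^*$ gives the same law, so $(D_z^*;~z \in \mathbb{Z})$ is stationary, and \eqref{blockformula} manifestly determines all its finite-dimensional distributions, hence the asserted uniqueness.

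Third, and this is where the real work is, I would prove total variation convergence by coupling. Because $Y$ has finite mean and an aperiodic distribution, I would couple the delayed renewal process $(T_k)$ with a renewal process carrying the stationary delay $\mathbb{P}(\mathrm{delay}=j) = \mu^{-1}\mathbb{P}(Y > j)$, letting their increments evolve independently until the first epoch $\tau$ that is simultaneously a renewal epoch of both; aperiodicity guarantees $\tau < \infty$ almost surely — this is precisely the coupling proof of Blackwell's renewal theorem, consistent with $u_n \to 1/\mu$ recorded in \eqref{descren}. From $\tau$ onward I would drive both regenerative processes by one and the same sequence of i.i.d.\ cycles, so that they agree on $\{\tau, \tau+1, \dots\}$, while the second process is by construction a copy of the stationary process $(D_z^*;~z \ge 0)$; hence $\big\|\mathrm{Law}(D_n, D_{n+1}, \dots) - \mathrm{Law}(D_0^*, D_1^*, \dots)\big\|_{\mathrm{tv}} \le \mathbb{P}(\tau > n) \to 0$ for every $n$. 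The main obstacle is exactly this successful coupling of the two renewal processes, i.e.\ showing $\mathbb{P}(\tau < \infty) = 1$: this is the only place the aperiodicity of $Y_1$ enters, and without it one only obtains convergence of Cesàro averages.
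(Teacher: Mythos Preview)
The paper does not actually prove this lemma: it is stated with a direct citation to \cite[Chapter VI, Theorem 2.1]{Asmussen} and no argument is given beyond that attribution. Your sketch is a correct outline of the standard proof of that textbook result --- i.i.d.\ cycle decomposition, construction of the stationary version by size-biasing the cycle containing the origin and placing a uniform age within it, verification of the block formula \eqref{blockformula}, and the coupling of a delayed with a stationary renewal process to obtain total variation convergence via $\mathbb{P}(\tau>n)\to 0$ under aperiodicity --- so there is nothing to compare against and nothing to correct.
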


The existence of a stationary limiting Mallows$(q)$ permutation of $\mathbb{Z}$ was established by Gnedin and Olshanski \cite{GO09}, along with various characterizations of its distribution. Their work is difficult to follow, because they did not exploit the regenerative properties of this distribution.
Gladkich and Peled \cite[Section 3]{GP} provides some further information about this model, including what they call a `stitching' construction of the two-sided model from its blocks on $(-\infty,T_0]$, $(T_0,T_1)$ and $[T_1 +1, \infty)$. But their construction too is difficult to follow. 
In fact, the structure of the two-sided Mallows permutation of $\mathbb{Z}$ is typical of the general structure of stationary regenerative processes.
This structure is spelled out in the following theorem, which follows easily from Lemma \ref{Asmlem}.
\begin{theorem}
\label{thmmain}
Let $\Pi$ be a positive recurrent regenerative random permutation of $\mathbb{N}_{+}$, with block length distribution $p$ and family of block distributions $Q_n$ on $\mathfrak{S}_n$, and $\mu: = \sum_{n} n p_n < \infty$.
\begin{enumerate}[$(i)$]
\item
There exists a unique stationary regenerative random permutation $\Pi^*$ of $\mathbb{Z}$, with associated stationary renewal process
$$\{\cdots T_{-2} < T_{-1} < T_0 < T_1 < T_2  \cdots\}  \subseteq \mathbb{Z},$$
with indexing defined by $T_{-1} < 0 \le T_0$, and renewal indicators $R_z^*$, with $R_z^* = 1$ implying that $\Pi$ splits at $z$, such that 
$$
\mathbb{P}( R_z^* = 1 ) = 1/ \mu   \quad \mbox{for}~z \in \mathbb{Z},
$$
and given the event $\{R^*_z = 1\}$, by letting $\Pi^{*,z}_{i}: = \Pi^{*}_{z+i} - z$ for $i \in \mathbb{N}_{+}$,
$$
(\Pi^{*,z}_{1}, \Pi^{*,z}_{2}, \ldots \,|\, R^{*}_z = 1) \stackrel{(d)}{=} (\Pi ^n_1, \Pi^n _2, \ldots \,|\, R_n = 1)  \quad \mbox{for } z \in \mathbb{Z},
$$
for every $n$ such that $\mathbb{P}(R_n = 1) >0$, where $(R_n;~n \geq 1)$ is the sequence of renewal indicators associated with the one-sided regenerative permutation $\Pi$.
\item
If $\Pi^{*}$ so defined, with block lengths $Y_z:= T_{z} - T_{z-1}$ for $z \in \mathbb{Z}$, then the $(Y_z;~ z \in \mathbb{Z})$ are independent, with the $Y_z, z \ne 0$ all copies of $Y_1$ with distribution $p$,
while $Y_0$ has the size-biased distribution 
$$
\mathbb{P}(Y_0 = n) = n p_n /\mu   \quad \mbox{for } n \geq 1 .
$$
Conditionally given all the block lengths, the delay $T_0$ has uniform distribution on $\{0,1, \ldots,Y_0-1\}$, and conditional on all the block lengths and on $T_0$, with given block lengths $n_i$ say, the reduced permutation of $\Pi^*$ 
on the block of $n_i$ integers $(T_{i-1}, T_{i}]$ is distributed according to $Q_{n_i}$.
\end{enumerate}
Conversely, if $\Pi$ is regenerative, existence of such a stationary regenerative permutation of $\mathbb{Z}$ implies that $\Pi$ is positive recurrent.
\end{theorem}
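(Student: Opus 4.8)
The plan is to read the whole statement off Lemma~\ref{Asmlem}, applied not to $(D_n;~n\ge 1)$ by itself but to the bivariate process $((D_n,R_n);~n\ge 1)$, where $D_n:=\Pi_n-n$ and $R_n$ is the $n$th renewal indicator of the (possibly delayed) renewal process with respect to which $\Pi$ is regenerative. Definition~\ref{regendef}, rephrased as in the discussion preceding Lemma~\ref{Asmlem}, says precisely that this pair is a regenerative process with embedded delayed renewal process $(T_k;~k\ge 0)$; positive recurrence means $\mu=\mathbb{E}(Y_1)<\infty$ with $Y_1$ aperiodic, so Lemma~\ref{Asmlem} applies and yields a two-sided stationary process $((D^*_z,R^*_z);~z\in\mathbb{Z})$, the total-variation limit of $((D_n,R_n),(D_{n+1},R_{n+1}),\dots)$, whose law is the unique one for which \eqref{blockformula} holds with $D$ replaced by the pair $(D,R)$. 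Setting $\Pi^*_z:=z+D^*_z$ and $\{T_z\}:=\{z\in\mathbb{Z}:R^*_z=1\}$, indexed by $T_{-1}<0\le T_0$, one gets $\mathbb{P}(R^*_z=1)=1/\mu>0$ by evaluating \eqref{blockformula} on $g=1(\text{renewal at the current coordinate})$: the right-hand side is $\tfrac1\mu\,\mathbb{E}\sum_{k\ge1}1(R^0_k=1)\,1(Y_1\ge k)$, and in a zero-delay renewal process the only renewal in $[1,Y_1]$ is $Y_1$ itself.

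Next I would verify that $\Pi^*$ really is a permutation of $\mathbb{Z}$ with the structure in (i). Since $(R^*_z)$ is a stationary renewal process on $\mathbb{Z}$ with finite mean spacing $\mu$ (standard renewal theory, and the content of (ii) below), the renewal set $\{T_z\}$ is a.s.\ bi-infinite and unbounded in both directions. For fixed $m<n$, the event $\{\,R^*_m=R^*_n=1,\ \Pi^*\text{ does not map }(m,n]\text{ onto itself}\,\}$ depends only on the coordinates $(D^*_i,R^*_i;~m\le i\le n)$; in the pre-limit one-sided process the analogous event is empty, because two splitting times $m<n$ force $\Pi$ to map $(m,n]=[n]\setminus[m]$ onto itself, so by the total-variation convergence it has probability $0$ for $\Pi^*$ too. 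Hence a.s.\ $\Pi^*$ maps each block $(T_{z-1},T_z]$ onto itself, so, being the concatenation of these finite blocks, $\Pi^*$ is a permutation of $\mathbb{Z}$ that splits at every $T_z$. The conditional law in (i) follows from \eqref{blockformula} with $g$ equal to $1(\text{renewal now})$ times a test function $h$ of the residual permutation: the right-hand side collapses to $\tfrac1\mu\,\mathbb{E}\,h(\text{residual of }\Pi^0\text{ after its first renewal})$, which by the zero-delay regeneration of $\Pi^0$ at $T_1$ together with Definition~\ref{regendef}(i)--(ii) equals $\tfrac1\mu\,\mathbb{E}\,h(\Pi^0)=\mathbb{P}(R^*_z=1)\,\mathbb{E}\,h(\Pi^0)$; since also $(\Pi^n\mid R_n=1)\stackrel{(d)}{=}\Pi^0$ for every admissible $n$, this gives $(\Pi^{*,z}\mid R^*_z=1)\stackrel{(d)}{=}(\Pi^n\mid R_n=1)$. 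Uniqueness is the uniqueness clause of Lemma~\ref{Asmlem}.

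For (ii), the law of the bi-infinite spacing sequence $(Y_z)$ --- the $Y_z$ for $z\ne0$ i.i.d.\ with law $p$, the straddling block $Y_0$ size-biased, and $T_0$ conditionally uniform on $\{0,\dots,Y_0-1\}$ --- is the standard Palm description of a stationary renewal process on $\mathbb{Z}$ with mean gap $\mu$, which also drops out of \eqref{blockformula} restricted to functions of the spacings. Conditionally on the block lengths $n_z$ and on $T_0$, the independence of the reduced permutations on the successive blocks $(T_{z-1},T_z]$, with laws $Q_{n_z}$, is inherited from the one-sided picture: a positive recurrent regenerative $\Pi$ is, after its delay, recurrent regenerative in the sense of Definition~\ref{regenblock} with block length law $p$ and block laws $(Q_n)$, and this conditional independence is transported to $\Pi^*$ by \eqref{blockformula}. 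Finally, for the converse: if a stationary regenerative permutation of $\mathbb{Z}$ exists, it carries a stationary embedded renewal process on $\mathbb{Z}$, and such a process, with its necessarily constant intensity, exists with positive intensity only when the inter-renewal law has finite mean, i.e.\ $\mu<\infty$ --- equivalently the stationary renewal density computed above is $1/\mu$, positive exactly in the positive recurrent case --- so existence of $\Pi^*$ forces $\Pi$ to be positive recurrent.

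\textbf{Anticipated main obstacle.} Lemma~\ref{Asmlem} delivers only a stationary process of displacements, so the delicate point is to confirm that $(D^*_z)$ is the displacement field of a bijection of $\mathbb{Z}$ and that its renewal times are genuine splitting times --- ``$\Pi^*$ splits at $z$'' being an a priori two-sided condition not obviously handed over by a tail limit. The way around it, used above, is to localize: once there is a renewal to the left (which happens a.s.\ because the stationary renewal set is unbounded below), ``splits at $z$'' becomes an event about finitely many coordinates, to which total-variation convergence applies, and combined with the a.s.\ existence of infinitely many renewals on each side this forces the one-sided block structure --- hence bijectivity --- onto $\Pi^*$.
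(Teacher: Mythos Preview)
Your proposal is correct and takes essentially the same approach as the paper: the paper states only that the theorem ``follows easily from Lemma~\ref{Asmlem}'' applied to the displacement process, and you have carried this out in detail, correctly applying the lemma to the bivariate process $((D_n,R_n);\,n\ge1)$ as the discussion preceding Lemma~\ref{Asmlem} already sets up. Your treatment of the bijectivity of $\Pi^*$ and the splitting-time issue is more careful than anything the paper provides, and is sound.
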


Also note that the law of the stationary regenerative random permutation $\Pi^{*}$ is uniquely defined by the equality of joint distributions
$$
(\Pi^{*}_1, \Pi^{*}_2, \ldots , T_0, T_1, T_2, \ldots \,|\, R^*_0 = 1) \stackrel{(d)}{=} (\Pi^0_1, \Pi^0_2, \ldots , T_0, T_1, T_2, \ldots ) ,
$$
where on the left side the $T_i$ are understood as the renewal times that are strictly positive for the stationary process $\Pi^*$, and on the right side the same notation is used for the renewal times of the regenerative random permutation $\Pi^0$ of $\mathbb{N}_{+}$ with zero delay, and on both sides $T_0 = 0$,  the $Y_i:= T_i - T_{i-1}$ for $i \ge 1$ are independent random lengths with distribution $p$, and conditionally given these block lengths are equal to $n_i$, the corresponding reduced permutations of $[n_i]$ are independent and distributed according to $Q_{n_i}$.  
So the random permutation $\Pi^0$ of $\mathbb{N}_{+}$ is a {\em Palm version} of the stationary permutation $\Pi^{*}$ of $\mathbb{Z}$. See Thorisson \cite{Th1995,Thbook} for general background on stationary stochastic processes.

Let $\Pi$ be a positive recurrent regenerative random permutation of $\mathbb{N}_{+}$, with block length distribution $p$.
For $n \in \mathbb{N}_{+}$, let
\begin{itemize}
\item
$\cyc_n$ be the length of the cycle of $\Pi$ containing $n$,
\item
$\cmp_n$ be the length of the component of $\Pi$ containing $n$,
\item
$\blk_n$ be the length of the block of $\Pi$ containing $n$.
\end{itemize}
Clearly, $1 \le \cyc_n \le \cmp_n \le \blk_n  \le \infty$, and the structure of these statistics is of obvious interest in the analysis of $\Pi$. Assuming further that $p$ is aperiodic, it follows from Lemma \ref{Asmlem} there is a limiting joint distribution of $(\cyc_n, \cmp_n, \blk_n)$ as $n \to \infty$.
However, the evaluation of this limiting joint distribution is not easy, even for the simplest regenerative models.

\quad Suppose that a large number $M$ of blocks of $\Pi$ are formed and concatenated to make a permutation of the first $N$ integers for $N \sim M \mu$ almost surely as $M \rightarrow \infty$.
Then among these $N \sim M \mu$ integers, there are about $M \ell p_{\ell}$ integers contained in regeneration blocks of length $\ell$. 
So for an integer $i = \lfloor U N \rfloor$ picked uniformly at random in $[N]$, the probability that this random integer falls in a regeneration block of length $\ell$ is approximately
$$
\mathbb{P}( \lfloor U N  \rfloor \in \mbox{regeneration block of length } \ell ) \approx \frac{M \, \ell \, p_{\ell} }{M \mu} = \frac { \ell p_{\ell} } {\mu} .
$$
This is the well known size-biased limit distribution of the length of block containing  a fixed point in a renewal process.
Now given that $\lfloor U N  \rfloor $ falls in a regeneration block of length $\ell$, the location of $\lfloor U N  \rfloor $ relative to the start of this block has uniform distribution on $[\ell]$.
These intuitive ideas are formalized and extended by the proposition below, which follows from Lemma \ref{Asmlem}, and the renewal reward theorem for ergodic averages \cite[Theorem 3.1]{Asmussen}.

\begin{proposition}
\label{cormain}
Let $\Pi$ be a positive recurrent regenerative random permutation of $\mathbb{N}_{+}$, with block length distribution $p$ with finite mean $\mu$, and blocks governed by $(Q_n;~n \geq 1)$.
\begin{enumerate}[$(i)$]
\item 
Let $C_{n,j}$ be the number of cycles of $\Pi$ of length $j$ that are wholly contained in $[n]$. Then the cycle  counts have limit frequencies
\begin{equation}
\label{limfreqs}
\lim_{n \to \infty} \frac{ C_{n,j} }{n} =  \frac{ \nu_j }{\mu}  \quad a.s. \quad \mbox{for}~j \geq 1,
\end{equation}
where $\nu_j $ is the expected number of cycles of length $j$ in a generic block of $\Pi$, and $\mu = \sum_j j \nu_j$. 
The same conclusion holds with $C_{n,j}$ replaced by the larger number of cycles of $\Pi$ of length $j$ whose least element is contained in $[n]$. 
\item
If the block length distribution $p$ is aperiodic, then
$$
\lim_{n \to \infty} \mathbb{P}(\Pi \mbox{ regenerates at } n ) = 1/\mu,   
$$
and
\begin{equation}
\lim_{n \to \infty} \mathbb{P}( \cyc_n = j ) = \frac{ j \nu_j }{ \mu } \quad \mbox{for } j \in \mathbb{N}_{+}.
\end{equation}
Alternatively, let $L_{\ell}^*$ be a random variable with values in $[\ell]$, which is the length of a sized-biased cycle of a random permutation of $[\ell]$ distributed as $Q_{\ell}$. Then
\begin{equation}
\lim_{n \to \infty} \mathbb{P}(\cyc_n = j, \blk_n = \ell) =\frac{\ell p_{\ell}}{\mu}  \mathbb{P}( L_{\ell}^{*} = j) \quad \mbox{for } 1 \leq j \leq \ell,
\end{equation}
and
\begin{equation}
\lim_{n \to \infty} \mathbb{P}(\cyc_n = j) = \frac{1}{\mu}  \sum_{\ell = 1}^\infty \ell p_{\ell} \mathbb{P}(L_{\ell}^* = j)  \quad \mbox{for~} j \geq 1.
\end{equation}
\item
Continuing to assume that $p$ is aperiodic, there is an almost sure limiting frequency $p^{\circ}_{j}$ of cycles of $\Pi$ of length $j$, relative to cycles of all lengths.
These limiting frequencies are uniquely determined by 
\begin{equation}
\label{limfreqfor1}
p^{\circ}_{j}= \frac{\nu_j}{\sum_{j=1}^{\infty} \nu_j} \quad \mbox{for } j \in \mathbb{N}_{+},
\end{equation}
or by the relations
\begin{equation}
\label{limfreqfor2}
p^{\circ}_{j}= \frac{\mu^\circ}{\mu} \frac{1}{j} \sum_{\ell = 1}^\infty \ell p_{\ell} \mathbb{P}(L_{\ell}^* = j) \quad \mbox{for } j \in \mathbb{N}_{+},
\end{equation}
with $\mu^\circ := \sum_{j = 1}^{\infty} j p^{\circ}_{j}$.
\item
The statements $(i) - (iii)$ hold with cycles replaced by components, with almost sure limiting frequencies $p^{\dagger}_j$ of components of $\Pi$ of length $j$.
\end{enumerate}
\end{proposition}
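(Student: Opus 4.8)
The plan is to obtain all four parts from two standard tools applied to the embedded renewal structure of $\Pi$: the renewal reward theorem for almost sure ergodic averages \cite[Theorem 3.1]{Asmussen}, which yields the a.s.\ limiting frequencies in parts $(i)$, $(iii)$, $(iv)$, and the total variation convergence of Lemma \ref{Asmlem} together with the explicit description of the stationary version $\Pi^{*}$ in Theorem \ref{thmmain}, which yields the distributional limits in part $(ii)$. Throughout, write $\nu_{\ell,j}$ for the expected number of length-$j$ cycles of a $Q_{\ell}$-distributed permutation of $[\ell]$ and set $\nu_{j} := \sum_{\ell} p_{\ell}\,\nu_{\ell,j}$, the expected number of length-$j$ cycles in a generic block; since a block of length $\ell$ is partitioned by its cycles, $\sum_{j} j\,\nu_{\ell,j} = \ell$, hence $\sum_{j} j\,\nu_{j} = \sum_{\ell} \ell\, p_{\ell} = \mu$ and in particular $\sum_{j}\nu_{j} \le \mu < \infty$. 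Let $\nu^{\dagger}_{\ell,j}$, $\nu^{\dagger}_{j}$ denote the analogues with ``cycle'' replaced by ``component''; the same identities hold since components of $\Pi$ also refine into cycles and coarsen into blocks.

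For part $(i)$: every cycle of $\Pi$ lies inside a single regenerative block, because the block partition of $\mathbb{N}_{+}$ is coarser than the cycle partition. Writing $M_{n}$ for the number of complete blocks contained in $[n]$, one has $C_{n,j} = \sum_{k=1}^{M_{n}} W_{k}^{(j)} + \varepsilon_{n}$, where $W_{k}^{(j)}$ is the number of length-$j$ cycles in the $k$-th block and $0 \le \varepsilon_{n}$ is at most the number of cycles in the block containing $n$. The $W_{k}^{(j)}$ are i.i.d.\ with mean $\nu_{j}$, so the renewal reward theorem gives $\frac{1}{n}\sum_{k=1}^{M_{n}} W_{k}^{(j)} \to \nu_{j}/\mu$ a.s.; and $\varepsilon_{n}$ is bounded by the length of the block containing $n$, which is $o(n)$ a.s.\ because the current lifetime of the renewal process at $n$ is $o(n)$ a.s. Hence $C_{n,j}/n \to \nu_{j}/\mu$ a.s., which is \eqref{limfreqs}; using instead the count of length-$j$ cycles with least element in $[n]$ alters $C_{n,j}$ by at most the number of cycles in one further block, again $o(n)$, so the limit is unchanged. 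The identity $\mu = \sum_{j} j\,\nu_{j}$ was recorded above.

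For part $(ii)$: $\lim_{n}\mathbb{P}(\Pi \text{ regenerates at } n) = 1/\mu$ is exactly the aperiodic discrete renewal theorem, i.e.\ \eqref{descren}. For the joint law of $(\cyc_{n}, \blk_{n})$, one applies aperiodic renewal theory to the pair $(\text{position of } n \text{ in its block}, \text{length of that block})$: it converges in distribution to $(\text{position}, \ell)$ with $\mathbb{P}(\ell) = \ell p_{\ell}/\mu$ and position uniform on $[\ell]$; since the reduced permutation of that block is $Q_{\ell}$ conditionally on its length $\ell$ and independent of the remaining renewal data, this lifts to joint convergence of (position in block, block length, reduced permutation of block) to the triple describing $\Pi^{*}$ at index $0$ in Theorem \ref{thmmain}$(ii)$. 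As $\cyc_{n}$, $\cmp_{n}$, $\blk_{n}$ are deterministic functions of this triple, and as a uniformly chosen position of $[\ell]$ falls in a prescribed length-$j$ cycle with probability $j/\ell$, so that $\mathbb{P}(L_{\ell}^{*}=j) = j\,\nu_{\ell,j}/\ell$, one obtains $\mathbb{P}(\cyc_{n}=j,\blk_{n}=\ell) \to \frac{\ell p_{\ell}}{\mu}\,\mathbb{P}(L_{\ell}^{*}=j)$; summing over $\ell$, $\mathbb{P}(\cyc_{n}=j) \to \frac{1}{\mu}\sum_{\ell}\ell p_{\ell}\,\mathbb{P}(L_{\ell}^{*}=j) = \frac{j}{\mu}\sum_{\ell} p_{\ell}\,\nu_{\ell,j} = j\,\nu_{j}/\mu$, in agreement with part $(i)$.

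For part $(iii)$: apply part $(i)$ to each $j$ and also with reward equal to the total cycle count of a block (finite mean $\sum_{j}\nu_{j}$), obtaining $C_{n,j}/n \to \nu_{j}/\mu$ and $K_{n}^{\circ}/n \to (\sum_{j}\nu_{j})/\mu$ a.s., where $K_{n}^{\circ}$ is the number of cycles of $\Pi$ wholly contained in $[n]$; dividing gives $p^{\circ}_{j} = \nu_{j}/\sum_{j}\nu_{j}$, which is \eqref{limfreqfor1}. Then $\mu^{\circ} := \sum_{j} j p^{\circ}_{j} = \mu/\sum_{j}\nu_{j}$, so $\sum_{j}\nu_{j} = \mu/\mu^{\circ}$ and $p^{\circ}_{j} = (\mu^{\circ}/\mu)\,\nu_{j} = \frac{\mu^{\circ}}{\mu}\frac{1}{j}\sum_{\ell}\ell p_{\ell}\,\mathbb{P}(L_{\ell}^{*}=j)$ by the formula for $\nu_{j}$ from part $(ii)$, which is \eqref{limfreqfor2}. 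Part $(iv)$ follows by rerunning parts $(i)$--$(iii)$ verbatim with ``cycle'' replaced by ``component'', $\nu_{j}$ by $\nu^{\dagger}_{j}$, and $L_{\ell}^{*}$ by the size-biased component length of a $Q_{\ell}$-permutation. The main obstacles are two technical points: in part $(ii)$, making rigorous the joint in-distribution convergence of (position in current block, current block length, reduced permutation of the current block) — the first two coordinates coming from the aperiodic renewal theorem for the age/current-lifetime process and the third lifted via its conditional law given the block length and independence of the surrounding renewal data; and in part $(i)$, the bookkeeping that passes from cycles inside complete blocks to cycles wholly in $[n]$ and then to cycles with least element in $[n]$, which in each case reduces to the fact that the current lifetime at $n$ is $o(n)$ a.s.
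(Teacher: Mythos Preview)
Your approach is essentially identical to the paper's: the paper simply states that the proposition ``follows from Lemma \ref{Asmlem}, and the renewal reward theorem for ergodic averages \cite[Theorem 3.1]{Asmussen}'', without further detail. You have supplied exactly those two ingredients, together with the bookkeeping (boundary terms $o(n)$, the identity $\mathbb{P}(L_\ell^*=j)=j\nu_{\ell,j}/\ell$, and the passage from $\nu_j$ to $p_j^\circ$) that the paper leaves implicit. The technical point you flag in part~$(ii)$---that $\cyc_n$ and $\blk_n$ depend on indices to the left of $n$, so one must lift the one-sided convergence of Lemma~\ref{Asmlem} to a statement about the full block containing $n$---is real, but is handled by the explicit description of the stationary version in Theorem~\ref{thmmain}$(ii)$, exactly as you indicate; the paper does not spell this out either.
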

\section{Uniform blocked permutations}
\label{s4}

In this section we study an example of regenerative permutations where it is possible to describe the limiting cycle count frequencies explicitly. 
The story arises from the following observation of Shepp and Lloyd \cite{SL}.
\begin{lemma}
\cite{SL}
Let $N$ be a random variable with the geometric$(1-q)$ distribution on $\mathbb{N}_0$. That is,
$$\mathbb{P}(N = n) = q^n(1 - q) \quad \mbox{for } n \geq 0.$$ 
Let $\Pi$ be a uniform random permutation of $[n]$ given $N = n$.
Let $(N_j;~j \geq 1)$ be the cycle counts of $\Pi$, which given $N = 0$ are identically $0$, and given $N=n$ are distributed as the counts of cycles of various lengths $j$ in a uniform random permutation of $[n]$.
Then $(N_j;~j \geq 1)$ are independent Possion random variables with means
$$\mathbb{E}N_j = \frac{q^j}{j} \quad \mbox{for~} j \ge 1.$$
\end{lemma}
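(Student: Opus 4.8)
The plan is to compute the joint probability mass function of $(N_j;~j\ge 1)$ directly and to recognize it as a product of Poisson mass functions, so that mutual independence and the stated means fall out simultaneously. Fix a sequence $(c_j;~j\ge 1)$ of non-negative integers with only finitely many nonzero entries, and put $n:=\sum_{j\ge 1} j\,c_j$; since $N<\infty$ almost surely, $\Pi$ has only finitely many cycles, so every realization of $(N_j)$ has this shape (the all-zero configuration corresponding to $n=0$ included). The first step is Cauchy's formula for permutations by cycle type: the number of permutations of $[n]$ with exactly $c_j$ cycles of length $j$ for each $j$ equals $n!\big/\prod_{j}\bigl(j^{c_j}\,c_j!\bigr)$. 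Dividing by $n!$ gives the conditional law of the cycle counts of the uniform permutation $\Pi$ given $N=n$, namely $\prod_j 1\big/\bigl(j^{c_j} c_j!\bigr)$.

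Next I would multiply by $\mathbb{P}(N=n)=q^n(1-q)$ and use the identity $n=\sum_j j c_j$ to split $q^n=\prod_j (q^j)^{c_j}$, obtaining
\[
\mathbb{P}\bigl(N_j=c_j\ \text{for all } j\bigr)=(1-q)\prod_{j\ge 1}\frac{(q^j/j)^{c_j}}{c_j!}.
\]
The final step absorbs the constant $1-q$ into the product: from the Taylor series $-\log(1-q)=\sum_{j\ge 1}q^j/j$, valid for $0<q<1$, we get $1-q=\prod_{j\ge 1}e^{-q^j/j}$, whence
\[
\mathbb{P}\bigl(N_j=c_j\ \text{for all } j\bigr)=\prod_{j\ge 1}\left(e^{-q^j/j}\,\frac{(q^j/j)^{c_j}}{c_j!}\right),
\]
which is exactly the product over $j$ of the $\mathrm{Poisson}(q^j/j)$ mass function evaluated at $c_j$. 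Since the joint mass function factors across $j$, the $N_j$ are mutually independent, and the $j$-th factor identifies $N_j\sim\mathrm{Poisson}(q^j/j)$, so $\mathbb{E}N_j=q^j/j$ as claimed.

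There is no serious obstacle in this argument: the only combinatorial input is Cauchy's cycle-type formula, and the only analytic input is the logarithm series together with the convergence of $\prod_{j}e^{-q^j/j}$. The one point to state carefully is that in any fixed configuration only finitely many $c_j$ are nonzero, so the displayed infinite products are genuinely the product of a finite expression with the convergent tail $\prod_{j}e^{-q^j/j}$, and the rearrangement $q^n=\prod_j(q^j)^{c_j}$ involves only finitely many factors. An alternative route uses the exponential formula for the bivariate generating function of cycle counts of uniform permutations, but the direct pointwise computation above is the shortest.
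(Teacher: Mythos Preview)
Your proof is correct and is the standard direct argument: Cauchy's cycle-type formula combined with the factorization $q^n=\prod_j (q^j)^{c_j}$ and the identity $1-q=\prod_j e^{-q^j/j}$ yields the product of Poisson mass functions. The paper does not supply its own proof of this lemma; it is simply cited as a known result of Shepp and Lloyd, so there is nothing to compare against.
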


 The {\em L\'evy-It\^o representation} of $N$ with the infinitely divisible geometric$(1-q)$ distribution as a weighted linear combination of independent Poisson variables, is realized as $N = \sum_{j=1}^\infty j N_j$.
The possibility that $N = 0$ is annoying for concatenation of independent blocks. 
But this is avoided by simply conditioning a sequence of independent replicas of this construction on $N > 0$ for each replica.
The obvious identity $N_j 1 (N >0 ) = N_j$ allows easy computation of
\begin{equation}
\label{countmean}
\mathbb{E}( N_j \,|\, N > 0 ) = \frac{ \mathbb{E} N_j  }{ \mathbb{P} ( N > 0 ) } = \frac{ q^{j-1} }{j} \quad \mbox{for } j \ge 1.
\end{equation}
Similarly, for $k = 1,2 \ldots$
\begin{equation}
\mathbb{P}( N_j = k \, | \, N > 0 ) = \frac{1}{k! \, q} \left(\frac{q^j}{j}\right)^k \exp\left( - \frac{q^j}{j} \right) \quad \mbox{for } j \ge 1,
\end{equation}
hence by summation 
\begin{equation}
\mathbb{P}( N_j > 0 \,|\, N > 0 ) = \frac{1}{q} \left[ 1 - \exp\left( - \frac{q^j}{j} \right) \right]  \quad \mbox{for } j \ge 1.
\end{equation}

\begin{proposition}
\label{unifgeo}
Let $\Pi$ be the regenerative random permutation of $\mathbb{N}_{+}$, which is the concatenation of independent blocks of uniform random permutations of lengths $Y_1,Y_2,\ldots$ where each $Y_i>0$ has the geometric$(1-q)$ distribution on $\mathbb{N}_{+}$. Then:
\begin{enumerate}[$(i)$]
\item
The limiting cycle count frequencies $\nu_j/\mu$ in \eqref{limfreqs} are determined by the formula $\mu:= \mathbb{E}(Y_1) = (1- q)^{-1}$, and
\begin{equation}
\nu_j = \frac{q^{j-1} }{j} \quad \mbox{for } j \in \mathbb{N}_{+}.
\end{equation}
\item
The distribution of $\Pi_1$ is given by
\begin{equation}
\label{pi1}
\mathbb{P}( \Pi_1 = k )  = \frac{1-q}{q} \left( \lambda_1(q) - \sum_{h=1}^{k-1} \frac{ q^h } {h} \right) \quad \mbox{for } k \in \mathbb{N}_{+},
\end{equation}
where 
\begin{equation*}
\lambda_1(q):= \sum_{h=1}^\infty \frac{q^h}{h } = - \log ( 1 - q ).
\end{equation*}
\item
The probability of the event $\{\Pi_1 = 1, \Pi_2 = 2\}$ that both $1$ and $2$ are fixed points of $\Pi$, is 
\begin{equation}
\label{fix2}
\mathbb{P}(\Pi_1 = 1, \Pi_2 = 2) = 1-q .
\end{equation}
\item
The regenerative random permutation $\Pi$ is not strictly regenerative.
\end{enumerate}
\end{proposition}
\begin{proof}
\noindent $(i)$ This follows readily from the formula \eqref{countmean} for the cycle counts in a generic block.

\noindent $(ii)$ By conditioning on the first block length $Y_1$, since given $Y_1 = y$ the distribution of $\Pi$ is uniform on $[y]$, there is the simple computation for $k = 1,2, \ldots$
\begin{equation*}
\mathbb{P}( \Pi_1 = k ) = \sum_{y = k}^\infty q ^{y-1} ( 1 - q) \frac{1}{y},
\end{equation*}
which leads to \eqref{pi1}. In particular, the probability that $1$ is  a fixed point of $\Pi$ is
$$
\mathbb{P}( \Pi_1 = 1 ) = - \frac{1-q}{q} \log(1-q).
$$ 

\noindent $(iii)$ The joint probability of the event $\{\Pi_1 = 1, \Pi_2 = 2\}$ is computed as
\begin{align*}
\mathbb{P}( \Pi_1 = 1, \Pi_2 = 2 ) &= \mathbb{P}(Y_1 = 1, \Pi_1 = 1, \Pi_2 = 2 ) +  \mathbb{P}( Y_1 \ge 2 , \Pi_1 = 1, \Pi_2 = 2 ) \\
&= (1-q) \cdot \frac{1-q }{q} \lambda_1(q) + \sum_{y=2}^\infty q^{y-1} ( 1-q) \frac{ 1 } { y ( y - 1) }\\
&= \frac{(1-q)^2 }{q} \lambda_1(q) + \frac{ 1 - q }{q} \lambda_2(q), 
\end{align*}
where 
\begin{equation}
\lambda_2(q) := \sum_{h=2}^\infty \frac{ q^{h} } { h ( h - 1) }  = q - ( 1-q) \lambda_1(q).
\end{equation}
But this simplifies, by cancellation of the two terms involving $\lambda_1(q)$, to the formula \eqref{fix2}.

\noindent $(iv)$ This follows from the fact that $\mathbb{P}(\Pi_1 = 1, \Pi_2 = 2) \ne \mathbb{P}(\Pi_1 = 1)^2$.
\end{proof}

More generally, the probability of the event $\{\Pi_i = i, ~1 \le i \le k\}$ involves
$$
\lambda_k(q):= \sum_{h=k}^\infty \frac{q^h}{ h ( h - 1) \cdots (h-k+1)} = \frac{1}{a_k}  q p_{k-2}(q) + \frac{(q-1)^{k-1}}{(k-1)!} \lambda_1(q),$$
for some $a_k \in \mathbb{Z}$ and $p_{k-2} \in \mathbb{Z}_{k-2}[q]$. The sequence $(a_k;~ k \geq 1)$ appears to be the sequence A180170 of OEIS, see \cite{MMR} for related discussion.

\begin{remark}
{\em
One referee proposed the following regenerative but not strictly regenerative permutation. Consider a stationary $M/M/c$ service system with $c>1$ servers, a single queue and the first-come-first serve policy. Labeling customers in the arrival order, the output order is a random permutation $\Pi$. When the system turns idle, we have a renewal for $\Pi$. But there is no renewal, though a split, if the first served customers are $1,2, \ldots, n$ in some output order and the $(n+1)^{th}$ customer is still in the system.
}
\end{remark}

Proposition \ref{unifgeo} is generalized by the following one, which is a corollary of Theorem \ref{thmmain} and Proposition \ref{cormain}.

\begin{proposition}
\label{cormain2}
Let $\Pi$ be a positive recurrent random permutation of $\mathbb{N}_{+}$, whose block lengths $Y_k$ are i.i.d. with distribution $p$, and whose reduced block permutations given their lengths are uniform on $\mathfrak{S}_n$ for each length $n$. 
\begin{enumerate}[$(i)$]
\item
The limiting cycle count frequencies $\nu_j/\mu$ in \eqref{limfreqs} are determined by the formula
\begin{equation}
\label{limfrequnif}
\nu_j = j^{-1} \mathbb{P}(Y_1 \ge j)  \quad \mbox{for}~j \in \mathbb{N}_{+},
\end{equation}
where $\mathbb{P}(Y_1 \ge j)  =  \sum_{i=j}^{\infty} p_i$. So the almost sure limiting frequencies $p^{\circ}_{j}$ of cycles of $\Pi$ of length $j$ are given by
\begin{equation}
\label{limfreqfor3}
p^\circ_j = \frac{ \sum_{i = j}^{\infty} p_i}{j  \sum_{i=1}^{\infty} p_i H_i} \quad \mbox{for } j  \in \mathbb{N}_{+},
\end{equation}
where $H_i : = \sum_{j = 1}^i 1/j$ is the $i^{th}$ harmonic sum.
\item
If $p$ is aperiodic, the limit distribution of displacements $D_n: = \Pi_n - n$ as $n \to \infty$ is the common distribution of the displacement $D^*_z:=\Pi^*_z - z$ for every $z \in \mathbb{Z}$, which is symmetric about $0$, according to the formula
\begin{equation}
\label{limdispl}
\lim_{n \to \infty} \mathbb{P}( D_n= d ) = \mathbb{P}( D^*_z  = d ) = \frac{1}{\mu} \mathbb{E} \left( \frac{ (Y_1 - |d| )_{+} }{ Y_1 } \right) \quad \mbox{for~}d \in \mathbb{Z},
\end{equation}
which implies
\begin{equation}
\lim_{n \to \infty} \mathbb{P}( \Pi_n > n ) = \mathbb{P}( D^*_z > 0 ) = \frac{1}{2} \left( 1 - \frac{1}{\mu} \right),
\end{equation}
and the same holds for $<$ instead of $>$.
\item 
Continuing to assume that $p$ is aperiodic, there is also the convergence of absolute moments of all orders $r>0$
\begin{equation}
\lim_{n \to \infty} \mathbb{E} | D_n  |^r  = \mathbb{E} |D^*_z  |^r  = \frac{2}{\mu} \mathbb{E} \delta_r(Y),
\end{equation}
where
$$
\delta_r(n):= \sigma_r(n) - n^{-1} \sigma_{r+1}(n) \quad \mbox{with }  \sigma_r(n):= \sum_{k=1}^n k^r,
$$
the sum of $r^{th}$ powers of the first $n$ positive integers. In particular, for $r  \geq 1$, 
$\delta_r(n)$ is a polynomial in $n$ of degree $r+1$, for instance
$$
\delta_1(n) = \frac{1}{6} ( n^2 - 1), \quad \delta_2(n) = \frac{1}{12} n ( n^2 - 1),
$$
implying that the limit distribution of displacements has a finite absolute moment of order $r$ if and only if $\mathbb{E} Y_1^{r+1} < \infty$.
\end{enumerate}
\end{proposition}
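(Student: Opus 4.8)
The plan is to read off all three parts from the renewal-theoretic results of Section~\ref{s3}, feeding in two elementary facts about a uniform random permutation $\sigma$ of $[n]$: its expected number of $j$-cycles is $1/j$ for $1\le j\le n$ and $0$ for $j>n$, and $\mathbb{P}(\sigma(k)=k+d)=1/n$ whenever $k$ and $k+d$ both lie in $[n]$. For part~$(i)$, Proposition~\ref{cormain}$(i)$ identifies $\nu_j$ with the expected number of $j$-cycles in a generic block; since a generic block has length $Y_1\sim p$ and, given its length, is uniform, conditioning on $Y_1$ gives $\nu_j=\sum_{n\ge j}p_n\,j^{-1}=j^{-1}\mathbb{P}(Y_1\ge j)$, which is \eqref{limfrequnif}. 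Then \eqref{limfreqfor3} follows from $p^\circ_j=\nu_j/\sum_k\nu_k$ (Proposition~\ref{cormain}$(iii)$, i.e.\ \eqref{limfreqfor1}) after evaluating $\sum_k\nu_k=\sum_k k^{-1}\mathbb{P}(Y_1\ge k)=\sum_i p_i\sum_{k=1}^i k^{-1}=\sum_i p_i H_i$ by Tonelli, a sum which is finite since $H_i\le 1+i$ and $\mu<\infty$.

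For part~$(ii)$ I would apply the block formula \eqref{blockformula} with $g$ the indicator of $\{d_0=d\}$, giving $\mathbb{P}(D^*_z=d)=\mu^{-1}\mathbb{E}\bigl(\sum_{k\ge1}1(D_k=d)\,1(Y_1\ge k)\bigr)$. Conditioning on $Y_1=n$, the inner sum counts the $k\in[n]$ with $\sigma(k)-k=d$ for $\sigma$ uniform on $[n]$, and counting the $k$ for which both $k$ and $k+d$ lie in $[n]$ shows its mean is $(n-|d|)_+/n$; hence $\mathbb{P}(D^*_z=d)=\mu^{-1}\mathbb{E}\bigl((Y_1-|d|)_+/Y_1\bigr)$, which is \eqref{limdispl}. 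Symmetry in $d$ is immediate; the total mass is $1$ because $\sum_d(n-|d|)_+=n^2$; and $\mathbb{P}(D^*_z=0)=\mu^{-1}$, so $\mathbb{P}(D^*_z>0)=\mathbb{P}(D^*_z<0)=\frac12(1-\mu^{-1})$. The convergence $\mathbb{P}(D_n=d)\to\mathbb{P}(D^*_z=d)$, hence $\mathbb{P}(\Pi_n>n)\to\mathbb{P}(D^*_z>0)$, comes from the total-variation convergence in Lemma~\ref{Asmlem}, whose hypotheses hold under the assumed aperiodicity.

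For part~$(iii)$, summing the displacement formula and using Tonelli gives $\mathbb{E}|D^*_z|^r=\mu^{-1}\mathbb{E}\bigl(Y_1^{-1}\sum_{d\in\mathbb{Z}}|d|^r(Y_1-|d|)_+\bigr)$, and the identity $\sum_{d\in\mathbb{Z}}|d|^r(n-|d|)_+=2\sum_{k=1}^n k^r(n-k)=2\bigl(n\sigma_r(n)-\sigma_{r+1}(n)\bigr)$ turns this into $\mathbb{E}|D^*_z|^r=\frac{2}{\mu}\mathbb{E}\delta_r(Y)$. The polynomial assertions come from Faulhaber's formula: each $\sigma_r$ is a polynomial of degree $r+1$ vanishing at $0$, so $\sigma_{r+1}(n)/n$ is itself a polynomial, and matching leading coefficients gives $\delta_r(n)=n^{r+1}/[(r+1)(r+2)]+O(n^r)$. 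The same asymptotic, valid for all real $r>0$ by comparing $\sum_{k\le n}k^r$ with $\int_0^n x^r\,dx$, shows $\mathbb{E}\delta_r(Y)<\infty$ if and only if $\mathbb{E}Y^{r+1}<\infty$. Finally, $\lim_n\mathbb{E}|D_n|^r=\mathbb{E}|D^*_z|^r$: in the divergent case it is Fatou applied to $D_n\to D^*_z$ in distribution, while in the convergent case I would expand $\mathbb{E}|D_n|^r$ over the renewal sequence — using $|D_n|\le\blk_n$ and $\mathbb{P}(\blk_n=\ell,\ n\text{ at position }m\text{ of its block})=u_{n-m}p_\ell\le p_\ell$ — and pass to the limit $u_{n-m}\to1/\mu$ by dominated convergence against the finite series $C\sum_\ell\ell^{r+1}p_\ell$.

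The one genuinely delicate step is this last interchange: upgrading the distributional limit $D_n\to D^*_z$ to convergence of $r$-th absolute moments needs uniform control of the tail of $\blk_n$ as $n$ varies (equivalently, uniform integrability of $\{|D_n|^r\}$ when $\mathbb{E}Y^{r+1}<\infty$). The bound $u_m\le1$, giving $\mathbb{P}(\blk_n=\ell)\le\ell p_\ell$, handles this in the zero-delay case; for a nontrivial delay one must either assume the delay block has finite $r$-th moment or note that the $n\to\infty$ statements are insensitive to a finite delay. All the remaining steps are a routine specialization of Section~\ref{s3} together with elementary summation and Faulhaber's formula.
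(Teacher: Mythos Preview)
Your proof is correct and follows essentially the same route as the paper's own argument: parts $(i)$ and $(ii)$ are identical in spirit (condition on the block length, use that a uniform permutation of $[n]$ has $\mathbb{E}C_{n,j}=1/j$ and $\mathbb{P}(\sigma(k)=k+d)=1/n$, then feed into Proposition~\ref{cormain} and the block formula of Lemma~\ref{Asmlem}). For part $(iii)$ the paper simply cites a known moment-convergence result for regenerative processes \cite[Chapter~VI, Problem~1.4]{Asmussen} together with Bernoulli/Faulhaber, whereas you give a self-contained dominated-convergence argument via the bound $|D_n|\le\blk_n$ and $u_m\le 1$; this is a welcome elaboration rather than a different method, and your flagging of the delay issue is appropriate.
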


\begin{proof}
\noindent $(i).$
Recall the well known fact that for a uniform random permutation of $[n]$, for $1 \le j \le n$ the expected number of cycles of length $j$ is $\mathbb{E} C_{n,j} = 1/j$.
This follows from the easier fact that the length of a size-biased pick from the cycles of a uniform permutation of $[n]$ is uniformly distributed on $[n]$, and the probability $1/n$ that the size-biased pick has length $j$
can be computed by conditioning on the cycle counts as $1/n = \mathbb{E}[ j C_{n,j}/n]$. 
Appealing to the uniform distribution of blocks given their lengths, given $Y_1$ the expected number of $j$-cycles in the block of length $Y_1$ is $(1/j) 1(Y_1 \ge j )$, and the conclusion follows. The limiting frequencies \eqref{limfreqfor3} are computed by injecting the formula \eqref{limfrequnif} for cycle counts into \eqref{limfreqfor1}.

\noindent $(ii).$
This follows from Lemma \ref{Asmlem}, with the expression for the limit distribution of $D^*_z:= \Pi^*_z - z$ given by
\begin{align}
\label{dzexp}
\mathbb{P} (D^*_z = d) = \frac{1}{\mu} \sum_{k = 1}^\infty \mathbb{P}( \Pi_k = k + d, Y_1 \ge k) .
\end{align}
By construction of $\Pi$, given $Y_1 = y$ for some $y \ge k$, the image of $\Pi_k$ is a uniform random pick from $[y]$, so
$$
\mathbb{P}( \Pi_k = k + d, Y_1 \ge k ,  Y_1 = y) = 1( 1 \le k + d \le y ) y^{-1} p_y \quad \mbox{for~} y \ge k.
$$
Sum this expression over $y$, then switch the order of summations over $k$ and $y$, to see that for each fixed $y \ge 1$ the coefficient of $\mu^{-1} y^{-1} p_y$ in \eqref{dzexp} is
$$
\sum_{k=1}^\infty 1 ( 1 \le k + d \le y ) = ( y - |d|)_+ ,
$$
since if $d \ge 0$ the sum over $k$ is effectively from $1$ to $y-d$, and while if $d < 0$ it is from $1 + |d|$ to $y$, and in either case the number of non-zero terms is $y - |d|$ if $|d| <  y$, and $0$ otherwise.
This gives the expression for the limit on the right side of \eqref{limdispl}, from which follow the remaining assertions.

\noindent $(iii).$
This follows from the formula \eqref{limdispl}, a known result of convergence of moments in the limit theorem for regenerative stochastic processes \cite[Chapter VI, Problem 1.4]{Asmussen}, and Bernoulli's formula for $\sigma_r(n)$ as a polynomial in $n$ of degree $r+1$, see e.g. Beardon \cite{Beardon}. 
\end{proof}

Note, however, that the companion results for components of $\Pi$ seem to be complicated. For instance, there is in general no simple expression for the expected number of components of $\Pi$ of a fixed length. The limiting frequencies $p^{\dagger}_j$ of components of $\Pi$ of length $j$ are obtained by plugging \eqref{SBCmp} into \eqref{limfreqfor2}, which are determined implicitly by the relations
\begin{equation}
p^{\dagger}_j = \frac{\mu^{\dagger}}{\mu} (j,1)^{\dagger}  \sum_{\ell = 1}^\infty  \frac{p_{\ell}}{\ell!} \sum_{k = 1}^{\ell} k \, (l-j,k-1)^{\dagger}
\quad \mbox{for } j \in \mathbb{N}_{+}.
\end{equation}
\section{$p$-shifted permutations}
\label{s5}

 In this section we study the $p$-shifted permutations introduced in Definition \ref{pshifted}. 
It is essential that $p$ be fixed and not random to make $p$-shifted permutations regenerative.
The point is that if $p$ is replaced by a random $P$, the observation of $\Pi_1, \ldots, \Pi_n$ given a split at $n$ allows some inference to be made about the $P_i$, $1 \le i \le n$. 
But according to the definition of the $P$-shifted permutation, these same values of $P_i$ are used to create the remaining permutation of $\mathbb{N}_{+} \setminus [n]$. 
Consequently, the independence condition required for regeneration at $n$ will fail for any non-degenerate random $P$.
Now we give a proof of Proposition \ref{pshiftedprop}.

\begin{proof}[Proof of Proposition \ref{pshiftedprop}]
\noindent $(i)$ This is clear from the definition of $p$-shifted permutations.

\noindent $(ii)$ This is obvious from the absorption sampling: one element of $[n]$ is sampled with probability $p_1 + \cdots + p_n$, then one element of the remaining in $[n]$ is sampled with probability $p_1 + \cdots + p_{n-1}$, and so on.
Alternatively, observe that
$$
u_n  = \sum_{\pi \in \mathfrak{S}_n}  \prod_{j=1}^n p \left(\pi_j - \sum_{1 \leq i <j} 1(\pi_i < \pi_j)\right) 
        =  \sum_{\pi \in \mathfrak{S}_n}  \prod_{j=1}^n p \left(j - \sum_{1 \leq i < \pi^{-1}_j} 1(\pi_i <j)\right),
$$
and the conclusion follows from the well known bijection $\mathfrak{S}_n \rightarrow [1] \times [2] \ldots \times [n]$ defined by
$$ \pi \mapsto  \left (j - \sum_{1 \leq i < \pi^{-1}_j} 1(\pi_i <j);~1 \le j \le n \right). $$

\noindent $(iii)$-$(iv)$ The strict regeneration is clear from the definition of $p$-shifted permutations, and the generating function \eqref{updag} follows easily from the the general theory of regenerative processes \cite[Chapter XIII]{Feller}.
 
\noindent $(v)$-$(vi)$
The particular case of these results for $p$ the geometric$(1-q)$ distribution was given by Basu and Bhatnagar \cite[Lemmas 4.1 and 4.2]{BB}. 
Their argument generalizes as follows.
The key observation is that for $X_1, X_2, \ldots$ the i.i.d. sample from $p$ which drives the construction of the $p$-shifted permutation $\Pi$, the sequence $M_n$ defined by $M_0:= 0$ and 
$$
M_{n}:= \max ( M_{n-1}, X_n ) - 1,
$$
has the interpretation that 
$$
M_n = \# \left\{ i : 1 \le i \le \max_{1 \le j \le n}  \Pi_j  \right\} - n,
$$
which can be understood as the current number of gaps in the range of $\Pi_j, 1 \le j \le n$.
The event $\{$$\Pi$ regenerates at $n$$\}$ is then identical to the event $\{M_n = 0\}$.
It is easily checked that $(M_n;~ n \geq 0)$ is a Markov chain with state space $\mathbb{N}_0$, and the unique invariant measure $(\mu_i;~ i \in \mathbb{N}_0)$ for the Markov chain $(M_n;~n \geq 0)$ is given by
\begin{equation}
\mu_0 =1 \quad \mbox{and} \quad \mu_i = \frac{\mathbb{P}(X_1>i)}{\prod_{j=1}^{i}[1 - \mathbb{P}(X_1 > j)]} \quad \mbox{for } i \geq 1.
\end{equation}
Moreover, it follows by standard analysis that this sequence $\mu_j$  is summable if and only if the mean $m$ of $X_1$ is finite.
The conclusion follows from the well known theory of Markov chains \cite[Chapter 6]{Durrett}, and Theorem \ref{thmmain}.
\end{proof}

See also Alappattu and Pitman \cite[Section 3]{AP08} for a similar argument used to derive the stationary distribution of the lengths of the loop-erasure in a loop-erased random walk.
For the $p$-shifted permutation, the first splitting probabilities $f_n:= \mathbb{P}( \Pi \mbox{ first splits at } n )$ are given by the explicit formulas
\begin{align*}
f_1 =& p_1,  \\
f_2 =& p_1 p_2 ,\\
f_3 =& p_1 p_2 ^2 + p_1^2 p_3 + p_1 p_2 p_3 , \\
f_4 =&p_1 p_2^3 + 2 p_1^2 p_2 p_3 + 2 p_1 p_2^2 p_3 + p_1^2 p_3^2 + p_1 p_2 p_3^2   \\
     & + p_1^3 p_4 + 2p_1^2 p_2p_4 + p_1 p_2^2 p_4 + p_1^2 p_3 p_4 + p_1 p_2 p_3 p_4.
\end{align*}
It is easily seen that for each $n$, $f_n(p_1,p_2, \ldots)$ is a polynomial of degree $n$ in variables $p_1, \ldots , p_n$. 
The polynomial so defined makes sense even for variables $p_i$ not subject to the constraints of a probability distribution. 
The polynomial can be understood as an enumerator polynomial for the vector of counts
$$
R_{n,j} := \pi_j - \sum_{i = 1}^j 1 (\pi_i < \pi_j)  \quad \mbox{for }1 \leq j \leq n.
$$
In the polynomial for $f_n$, the choice of $\pi_1, \ldots, \pi_n$ is restricted to the set $\mathfrak{S}^\dagger_n$ of indecomposable permutations of $[n]$, and the coefficient of $p_1^{r_1} \ldots p_n^{r_n}$ is for each choice of
non-negative integers $r_1, \ldots, r_n$ with $\sum_{i=1}^n r_i = n$ is the number of indecomposable permutations of $[n]$ such that $\sum_{j=1}^n 1 (R_{n,j} = i ) = r_i$ for each $1 \le i \le n$.
In particular, the sum of all the integer coefficients of these monomials is
$$
f_n(1,1, \ldots) =(n,1)^{\dagger},
$$
which is the number of indecomposable permutations of $[n]$ discussed in Section \ref{s2}.

Properties of the limiting Mallows$(q)$ permutations of $\mathbb{N}_{+}$ and of $\mathbb{Z}$ are obtained by specializing Proposition \ref{pshiftedprop} with $p$ the geometric$(1-q)$ distribution on $\mathbb{N}_{+}$.
Many results of  \cite{GO09,GP} acquire simpler proofs by this approach.
The following corollary also exposes a number of properties of the limiting Mallows$(q)$ models which were not 
mentioned in previous works.  

\begin{corollary}
For each $0<q<1$, with $\mathbb{P}_q$ governing $\Pi$ as a geometric$(1-q)$-shifted permutation of $\mathbb{N}_{+}$, the conclusions of Proposition \ref{pshiftedprop} apply with the following reductions:
\begin{enumerate}[$(i)$]
\item
The formula \eqref{inject} reduces to
\begin{equation}
\label{injectq}
\mathbb{P}_q( \Pi_i = \pi_i, 1 \le i \le n) = (1-q)^n \, q^{\inv(\pi) + \delta(n,\pi)} 
\end{equation}
where $\inv(\pi)$ is the number of inversions of $\pi$, and $\delta(n,\pi):= \sum_{i=1}^n \pi_i - \frac{1}{2} n ( n +1)$.
In particular, \eqref{injectq} holds with the further simplification $\delta(n,p)=0$ if and only if $\pi$ is a permutation of $[n]$. 
\item
The probability that $\Pi$ maps $[n]$ to $[n]$ is 
\begin{equation}
\label{unq}
u_{n,q} := \mathbb{P}_q( [n] \mbox{ is a block of } \Pi ) = (1-q)^n Z_{n,q},
\end{equation}
where $Z_{n,q}$ is defined by \eqref{qfac}.
\item
The $\mathbb{P}_q$ distribution of $\Pi$ is strictly regenerative, with regeneration at every $n$ such that $[n]$ is a block of $\mathbb{N}_{+}$, and renewal sequence $(u_{n,q};~n \geq 1)$ as above.
\item
The $\mathbb{P}_q$ distribution of component lengths $f_{n,q} = \mathbb{P}_q(Y_1= n)$, where $Y_1$ is the length of the first component of $\Pi$, is given by the probability generating function
\begin{equation}
\label{pnmallows}
\sum_{n = 1}^\infty f_{n,q} z^n  = 1 - \frac{1}{U_q(z)} \quad \mbox{where }  U_q(z) = 1 + \sum_{n=1}^\infty u_{q,n} z^n,
\end{equation}
as well as by the formula
\begin{equation}
\label{pnmallows}
f_{n,q} = (1- q)^n \, Z_{n,q}^\dagger,
\end{equation}
where
$
Z_{n,q}^\dagger := \sum_{\pi \in \mathfrak{S}^{+}_n }  q^{\inv(\pi)} 
$
is the restricted partition function of the Mallows$(q)$ distribution $M_{n,q}$ on the set $\mathfrak{S}_n^{\dagger} $ of indecomposable permutations of $[n]$.
\item
Under $\mathbb{P}_q$, conditionally given the component lengths, say $Y_i = n_i$ for $i = 1,2,\ldots$, the reduced components of $\Pi$ are independent random permutations of $[n_i]$ with conditional Mallows$(q)$ distributions $M_{n_i,q}^{\dagger}$ 
defined by
\begin{equation}
M_{n_i,q}^{\dagger}(\pi):= \frac{1}{Z_{n_i,q}^\dagger}\,   q^{\inv(\pi)} \quad \mbox{for } \pi \in \mathfrak{S}^{+}_{n_i}.
\end{equation}
\end{enumerate}
\end{corollary}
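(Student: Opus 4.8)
The plan is to obtain all six parts by substituting $p_k=(1-q)q^{k-1}$ into Proposition \ref{pshiftedprop} and exploiting the single algebraic fact that the inversion count is additive under direct sums, $\inv(\sigma_1\oplus\cdots\oplus\sigma_k)=\sum_{i=1}^k\inv(\sigma_i)$. First I would prove $(i)$ by direct computation in \eqref{inject}: with $p(k)=(1-q)q^{k-1}$ the right-hand side becomes $(1-q)^n q^{E}$ with $E=\sum_{j=1}^n\bigl(\pi_j-1-\#\{i<j:\pi_i<\pi_j\}\bigr)$. Summing $\#\{i<j:\pi_i<\pi_j\}$ over $j$ counts the non-inversions of $\pi$, hence equals $\binom{n}{2}-\inv(\pi)$, so $E=\sum_j\pi_j-n-\binom{n}{2}+\inv(\pi)=\inv(\pi)+\delta(n,\pi)$ because $n+\binom{n}{2}=\tfrac12 n(n+1)$; this is \eqref{injectq}. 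The supplementary assertion is immediate from the elementary fact that any set of $n$ distinct positive integers has sum at least $\tfrac12 n(n+1)$, with equality only for $[n]$, so $\delta(n,\pi)\ge0$ vanishes exactly when $\pi\in\mathfrak{S}_n$.

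For $(iii)$, \eqref{un} gives $u_{n,q}=\prod_{j=1}^n\sum_{i=1}^jp_i=\prod_{j=1}^n(1-q^j)$, and comparison with \eqref{qfac} yields $u_{n,q}=(1-q)^nZ_{n,q}$; equivalently one sums \eqref{injectq} (with $\delta\equiv0$) over $\mathfrak{S}_n$. Part $(iv)$ is then exactly Proposition \ref{pshiftedprop}$(iii)$ applied to the geometric$(1-q)$ distribution, with this renewal sequence, and the generating-function identity in $(v)$ is the specialization of \eqref{updag}.

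The core of the argument is $(vi)$, from which the remaining formula in $(v)$ and then the uniqueness in $(ii)$ follow. Conditionally on $[n]$ being a block of $\Pi$, write the reduced permutation $\rho$ of $\Pi$ on $[n]$ as $\rho=\sigma_1\oplus\cdots\oplus\sigma_k$ according to its decomposition into indecomposable components, whose lengths form a composition $(n_1,\dots,n_k)$ of $n$. By \eqref{injectq} with $\delta=0$ and additivity of $\inv$, $\mathbb{P}_q(\Pi_i=(\sigma_1\oplus\cdots\oplus\sigma_k)_i,\ 1\le i\le n)=(1-q)^n\prod_{i=1}^kq^{\inv(\sigma_i)}$; normalizing over all $\sigma_i\in\mathfrak{S}_{n_i}^\dagger$ shows that, given the composition, the $\sigma_i$ are independent with $\sigma_i\sim M_{n_i,q}^\dagger$, which is $(vi)$. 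Summing $\mathbb{P}_q(\Pi_i=\sigma_i,\ 1\le i\le n)$ over $\sigma\in\mathfrak{S}_n^\dagger$ then gives $f_{n,q}=\mathbb{P}_q(\Pi\text{ first splits at }n)=\sum_{\sigma\in\mathfrak{S}_n^\dagger}(1-q)^nq^{\inv\sigma}=(1-q)^nZ_{n,q}^\dagger$, the second formula in $(v)$.

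Finally, $(ii)$: a recurrent strictly regenerative permutation is specified, as in the proposition preceding this corollary, by first-split probabilities $(f_n)$ and indecomposable-block laws $(Q_n)$; the hypothesis that the reduced permutation on $[n]$ is $M_{n,q}$-distributed given $[n]$ is a block translates, via the same direct-sum bookkeeping together with the resulting identity $Z_{n,q}=\sum_{\text{compositions of }n}\prod_iZ_{n_i,q}^\dagger$, into $Q_n=M_{n,q}^\dagger$ for all $n$ and $f_n=t^nZ_{n,q}^\dagger$ for a single constant $t>0$; the normalization $\sum_nf_n=1$ then forces $t=1-q$ by strict monotonicity, since $\sum_n(1-q)^nZ_{n,q}^\dagger=1$ (equivalently $U_q(1)=\infty$, as $\prod_{j\ge1}(1-q^j)>0$), and the law is thereby pinned down to $\mathbb{P}_q$. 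The point requiring care is that this uniqueness holds within the recurrent strictly regenerative class — in which the geometric$(1-q)$-shifted permutation lies by Proposition \ref{pshiftedprop}$(v)$ — rather than literally among all distributions on permutations of $\mathbb{N}_{+}$, since a permutation with no finite splitting time is invisible to every one of the conditionings; I would state $(ii)$ accordingly, for instance as uniqueness of the pair $(f_n,Q_n)$. I expect the extraction of $f_n=t^nZ_{n,q}^\dagger$ from the composition identities, and the verification that $t=1-q$, to be the only genuinely nontrivial step.
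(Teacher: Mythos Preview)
The paper offers no proof of this corollary beyond the sentence ``obtained by specializing Proposition \ref{pshiftedprop} with $p$ the geometric$(1-q)$ distribution,'' so your derivation is precisely the intended argument and your computations for $(i)$, $(iii)$--$(vi)$ are correct; in particular the identity $E=\inv(\pi)+\delta(n,\pi)$ via $\sum_j\#\{i<j:\pi_i<\pi_j\}=\binom{n}{2}-\inv(\pi)$, and the extraction $f_n=f_1^{\,n}Z_{n,q}^\dagger$ from the composition identity $Z_{n,q}=\sum_{(n_1,\ldots,n_k)\vDash n}\prod_i Z_{n_i,q}^\dagger$, are exactly right.

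Your caveat about $(ii)$ is well taken and goes beyond what the paper says. As literally stated, the uniqueness claim is vulnerable to the objection you raise: a law supported on permutations with no finite splitting time satisfies the hypothesis vacuously, and more generally the hypothesis constrains only the behavior of $\Pi$ on $[n]$ given a split at $n$, not the residual permutation. Your proposed repair --- uniqueness of the pair $(f_n,Q_n)$ within the recurrent strictly regenerative class, pinned down by $Q_n=M_{n,q}^\dagger$ and $f_n=t^nZ_{n,q}^\dagger$ with $t$ forced to $1-q$ by $\sum_n f_n=1$ --- is the correct formulation and proof. The paper's phrasing should be read with that implicit restriction.
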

\section{$p$-biased permutations}
\label{s6}

This section provides a detailed study of $p$-biased permutations introduced in Definition \ref{pbiased}.
For a $P$-biased permutation $\Pi$ of $\mathbb{N}_{+}$ with $P = (P_1, P_2, \ldots)$ a random discrete distribution, the joint distribution of $(\Pi_1, \ldots \Pi_n)$ is computed by the formula \eqref{genbias}.
In particular, the distribution of $\Pi_1$ is given by the vector of means $(\mathbb{E}(P_1), \mathbb{E}(P_2), \ldots )$.
So if $P$ is the GEM$(\theta)$ distribution, then $\Pi_1$ has the geometric$(\theta/(1+\theta))$ distribution on $\mathbb{N}_{+}$.
The index $\Pi_1$ of a single size-biased pick from $(P_1,P_2, \ldots)$, and especially the random size
$P_{\Pi_1}$ of this pick from $(P_1,P_2, \ldots)$ plays an important role in the theory of random discrete distributions and associated random partitions of positive integers \cite{Pitmanbook}. 
Features of the joint distribution of $(P_{\Pi_1}, \ldots, P_{\Pi_n})$ also play an important role in this setting \cite{Pitman95}, but we are unaware of any previous study of $(\Pi_1, \Pi_2, \ldots )$
regarded as a random permutation of $\mathbb{N}_{+}$.

We start with the following construction of size-biased permutations from Perman, Pitman and Yor \cite[Lemma 4.4]{PPY}. 
See also Gordon \cite{Gordon} where this construction is indicated in the abstract, and Pitman and Tran \cite{Ptran} for further references to size-biased permutations.
\begin{lemma}
\cite{Gordon, PPY}
\label{lemPPY}
Let $(L_i;~ 1 \leq i \leq n)$ be a possibly random sequence such that $\sum_{i=1}^n L_i = 1$, and $(\varepsilon_i;~ 1 \leq i \leq n)$ be i.i.d. standard exponential variables, independent of the $L_i$'s. 
Define
$$Y_i : = \frac{\varepsilon_i}{L_i} \quad \mbox{for } 1 \leq i \leq n.$$ 
Let $Y_{(1)} < \cdots <Y_{(n)}$ be the order statistics of the $Y_i$'s, and $L^*_1, \cdots ,L^*_n$ be the corresponding $L$ values. 
Then $(L^*_i;~1 \leq i \leq n)$ is a size-biased permutation of $(L_i;~ 1 \leq i \leq n)$.
\end{lemma}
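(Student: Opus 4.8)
The plan is to condition on the sequence $(L_i;~1 \le i \le n)$, which reduces the problem to the case of a fixed probability vector $(\ell_i;~1 \le i \le n)$ with $\sum_{i=1}^n \ell_i = 1$ and, to begin with, all $\ell_i > 0$. Writing $\sigma$ for the random permutation of $[n]$ defined by $L^*_i = \ell_{\sigma(i)}$, so that $\sigma(i)$ is the index of the $i$-th smallest among $Y_1, \ldots, Y_n$, it suffices to prove that
\[
\mathbb{P}(\sigma(1) = a_1, \ldots, \sigma(n) = a_n) = \prod_{i=1}^n \frac{\ell_{a_i}}{1 - \ell_{a_1} - \cdots - \ell_{a_{i-1}}}
\]
for every permutation $(a_1, \ldots, a_n)$ of $[n]$, which is exactly the assertion that the conditional law of $(L^*_i)$ given $(L_i) = (\ell_i)$ is a size-biased permutation of $(\ell_i)$. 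Integrating over the law of $(L_i)$ then yields the lemma, and the case where some coordinates of $(\ell_i)$ vanish follows by a routine limiting argument (or may simply be excluded, as is customary).

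First I would record that $Y_i = \varepsilon_i/\ell_i$ is exponentially distributed with rate $\ell_i$, since $\mathbb{P}(Y_i > t) = \mathbb{P}(\varepsilon_i > \ell_i t) = e^{-\ell_i t}$, and that the $Y_i$ remain independent; in particular ties occur with probability zero, so $\sigma$ is almost surely well defined. The heart of the argument is then the classical identity that, for independent exponential variables with rates $\ell_1, \ldots, \ell_n$,
\[
\mathbb{P}(Y_{a_1} < Y_{a_2} < \cdots < Y_{a_n}) = \prod_{i=1}^n \frac{\ell_{a_i}}{\ell_{a_i} + \ell_{a_{i+1}} + \cdots + \ell_{a_n}},
\]
which I would prove by induction on $n$. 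The event $\{Y_{a_1} = \min_j Y_j\}$ has probability $\ell_{a_1}/(\ell_{a_1} + \cdots + \ell_{a_n})$ by the standard competing-exponentials computation, and, conditionally on this event and on the value of the minimum, the memorylessness of the exponential law makes the excesses $Y_j - Y_{a_1}$ for $j \ne a_1$ again independent exponentials with the unchanged rates $\ell_j$; applying the inductive hypothesis to this $(n-1)$-vector gives the conditional probability of the order $a_2, \ldots, a_n$, and multiplying gives the claimed product.

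Finally, using $\sum_{j=1}^n \ell_j = 1$ I would rewrite each denominator as $\ell_{a_i} + \cdots + \ell_{a_n} = 1 - (\ell_{a_1} + \cdots + \ell_{a_{i-1}})$, which turns the product above into precisely the size-biased permutation probability displayed at the outset. I expect the only point requiring real care to be the conditional-independence step — justifying that, given which variable attains the minimum and the value of that minimum, the remaining variables start afresh as independent exponentials with the same rates — but this is a standard feature of exponential races that can be cited or verified by a short density computation.
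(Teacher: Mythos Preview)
Your argument is correct and is the standard exponential-races proof of this fact. The paper itself does not supply a proof: it simply cites the lemma from \cite[Lemma 4.4]{PPY} and uses it as a black box, so there is nothing further to compare.
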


By applying the formula \eqref{ie} and Lemma \ref{lemPPY}, we evaluate the splitting probabilities for $P$-biased permutation with $P$ a random discrete distribution.
\begin{proposition}
\label{bon}
Let $\Pi$ be a $P$-biased permutation of $\mathbb{N}_{+}$ of a random discrete distribution $P = (P_1, P_2, \ldots)$, and $T_n: = 1 -\sum_{i=1}^n P_i$. Then the probability that $\Pi$ maps $[n]$ to $[n]$ is given by \eqref{ie} with 
\begin{equation}
\Sigma_{n,j}= \sum_{1 \le i_1 < \cdots < i_j \le n } \mathbb{E} \left( \frac{T_n} {T_n + P_{i_1} + \cdots + P_{i_j} } \right).
\end{equation}
\end{proposition}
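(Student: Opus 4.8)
The plan is to feed the $P$-biased structure into the inclusion--exclusion identity \eqref{ie}--\eqref{defsigma}. By those formulas it suffices to evaluate, for each fixed $1 \le i_1 < \cdots < i_j \le n$, the probability $\mathbb{P}\!\left(\bigcap_{k=1}^j A_{n,i_k}\right)$ with $A_{n,i}$ as in \eqref{Ani}, and to show it equals $\mathbb{E}\!\left(T_n/(T_n + P_{i_1} + \cdots + P_{i_j})\right)$. First I would unwind the meaning of $A_{n,i}$ for a $P$-biased permutation: by Definition \ref{pbiased}, $\Pi^{-1}_i$ is the rank of $i$ in the order of first appearance of the i.i.d. sample $(X_1,X_2,\ldots)$ from $P$, so $\Pi^{-1}_a < \Pi^{-1}_b$ iff value $a$ appears before value $b$. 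Hence $A_{n,i}$ is the event that some value in $\{n+1,n+2,\ldots\}$ appears before $i$, and a one-line argument shows $\bigcap_{k=1}^j A_{n,i_k}$ is exactly the event that among the values $\{i_1,\ldots,i_j\}\cup\{n+1,n+2,\ldots\}$ the first to appear lies in $\{n+1,n+2,\ldots\}$: if some $i_k$ were the first of this collection to appear, then no value exceeding $n$ could precede it, contradicting $A_{n,i_k}$.

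Next I would pass to the exponential representation supplied by Lemma \ref{lemPPY}. Conditionally on $P$, attach to each $m \in \mathbb{N}_{+}$ an independent clock $Y_m := \varepsilon_m / P_m$ with $\varepsilon_m$ i.i.d. standard exponential; then $Y_m$ is exponential with rate $P_m$, the clocks are independent across $m$ given $P$, and by (the countable extension of) Lemma \ref{lemPPY} the increasing order of the $Y_m$ is the order of first appearance. Only finitely many $Y_m$ lie below any level $t$, since $\sum_m \mathbb{P}(Y_m \le t) = \sum_m (1 - e^{-P_m t}) \le t\sum_m P_m = t$, so the ordering is well defined. Consequently $A_{n,i_k} = \{Y_{>n} < Y_{i_k}\}$ where $Y_{>n} := \min_{m>n} Y_m$, and $\bigcap_{k=1}^j A_{n,i_k} = \{Y_{>n} < \min_k Y_{i_k}\}$. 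Given $P$, the variable $\min_k Y_{i_k}$ is exponential with rate $P_{i_1} + \cdots + P_{i_j}$, while $Y_{>n}$ is exponential with rate $\sum_{m>n} P_m = T_n$, and the two are independent. The elementary fact $\mathbb{P}(E < E') = a/(a+b)$ for independent exponentials of rates $a,b$ then gives, conditionally on $P$, the value $T_n/(T_n + P_{i_1} + \cdots + P_{i_j})$; taking expectation over $P$ yields the claimed expression for $\mathbb{P}\!\left(\bigcap_{k=1}^j A_{n,i_k}\right)$, and substituting into \eqref{defsigma} and \eqref{ie} completes the proof.

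The only real obstacle is bookkeeping rather than idea: Lemma \ref{lemPPY} as quoted is for finite sequences, so I must justify the version for the countably infinite family $(P_m)_{m \ge 1}$ with a possibly random $P$. This means checking that the clocks a.s.\ admit a strictly increasing order with no finite accumulation point (handled by the Borel--Cantelli estimate above), that $Y_{>n}$ is genuinely exponential with rate $T_n$ and independent of $(Y_m)_{m \le n}$ conditionally on $P$, and that all conditioning on the random environment $P$ is pushed through by Fubini. None of this is deep; alternatively one can prove the identity first for $P$ supported on a finite set, or truncate to the first $N$ values and let $N \to \infty$ using dominated convergence, and then deduce the general case.
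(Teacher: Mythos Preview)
Your proposal is correct and follows essentially the same approach as the paper: both feed the exponential-clock representation of the $P$-biased permutation (Lemma~\ref{lemPPY}) into the inclusion--exclusion identity \eqref{ie}--\eqref{defsigma}, reduce $\bigcap_k A_{n,i_k}$ to a race between an exponential of rate $T_n$ and an independent exponential of rate $P_{i_1}+\cdots+P_{i_j}$, and conclude via the competing-exponentials formula. The only difference is cosmetic: the paper conditions on $\varepsilon$ and integrates to get $\mathbb{E}\exp(-\varepsilon(\sum P_{i_k})/T_n)$ before evaluating the Laplace transform, while you condition on $P$ and apply $\mathbb{P}(E<E')=a/(a+b)$ directly; your added care about extending Lemma~\ref{lemPPY} to the countable setting is a point the paper leaves implicit.
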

\begin{proof}
Recall the definition of $A_{n,i}$ from \eqref{Ani}. By Lemma \ref{lemPPY}, for $1\leq  i_1< \cdots < i_j \leq n$,
\begin{align*}
\mathbb{P}\left( \bigcap_{k =1}^j A_{n,i_k}  \right) & =\mathbb{P} \left( \min_{1 \leq k \leq j} \left(\frac{\varepsilon_{i_k}}{P_{i_k}}\right) > \frac{\varepsilon}{T_n} \right) \\
& = \mathbb{E} \exp\left(-\frac{\varepsilon( P_{i_1} + \cdots + P_{i_j})}{T_n}\right) \\
& = \mathbb{E} \left( \frac{T_n} {T_n + P_{i_1} + \cdots + P_{i_j} } \right),
\end{align*}
which leads to the desired result.
\end{proof}

 In terms of the occupancy scheme by throwing balls independently into an infinite array of boxes indexed by $\mathbb{N}_{+}$ with random frequencies $P = (P_1, P_2,\ldots)$, the quantity $\Sigma_{n,j}$ has the following interpretation. Let $C_n$ be the count of empty boxes when the first box in $ \{n+1, n+2, \cdots\}$ is filled. Then
\begin{equation}
\Sigma_{n,j} = \mathbb{E} \binom{C_n}{j}.
\end{equation}
Further analysis of $C_n$ and $\Sigma_{n,j}$ for the GEM$(\theta)$ model will be presented in the forthcoming article \cite{DPT}.
Contrary to $p$-shifted permutations, we consider $P$-biased permutations where $P$ is determined by a RAM \eqref{ram}.
In the latter case, the only model with $P$ fixed is the geometric$(1-q)$-biased permutation for $0 < q < 1$.
Now we give a proof of Proposition \ref{pbiasedprop}.

\begin{proof}[Proof of Proposition \ref{pbiasedprop}]
\noindent $(i)$ The strict regeneration follows easily from the stick breaking property of RAM models.
By Lemma \ref{lemPPY}, the renewal probabilities $u_n$ are given by
$$u_n =  \mathbb{P}\left(\max_{1 \leq i \leq n} \frac{\varepsilon_i}{P_i} < \frac{\varepsilon}{T_n}\right),$$ 
where $P_i = W_i \prod_{j=1}^{i-1}(1-W_j)$, $T_n = \prod_{j=1}^n (1-W_j)$, and the $\varepsilon_i$'s and $\varepsilon$ are independent standard exponential variables. Note that for each $x>0$,
$$
\mathbb{P}\left(\max_{1 \leq i \leq n} \frac{\varepsilon_i}{P_i} < \frac{x}{T_n}\right)  = \mathbb{P} \left(\bigcap_{i=1}^n \left\{\varepsilon_i < \frac{x P_i}{T_n}\right\} \right)
      = \mathbb{E} \prod_{i=1}^n \left(1 - e^{-\frac{xP_i}{T_n}} \right),
$$
which, by conditioning on $\varepsilon = x$, leads to 
\begin{equation}
\label{interm}
u_n = \int_0^{\infty} e^{-x} \mathbb{E} \prod_{i=1}^n \left(1 - e^{-x P_i / T_n}\right) dx.
\end{equation}
Since
$(W_1, \ldots, W_n) \stackrel{(d)}{=} (W_n, \ldots, W_1)$ for every $n \ge 1$, the formula \eqref{interm} simplifies to \eqref{PTa}.
So to prove $u_{\infty} > 0$, it suffices to prove \eqref{gencri}.

\noindent $(ii)$ This is the deterministic case where $P_i = q^{i-1}(1-q)$ and $T_n = q^n$. So the formula \eqref{PTa} specializes to
$$u_n = \int_0^{\infty} e^{-x} \prod_{i=1}^n \left(1 -  e^{-x(1-q)/q^i}\right) dx.$$
It follows by standard analysis that $u_{\infty}: = \lim_{n \rightarrow \infty} u_n > 0$ if and only if
$$\sum_{i = 1}^{\infty} e^{-x(1-q)/q^i} < \infty.$$
But this is obvious for $0 < q < 1$, which implies that $\Pi$ is positive recurrent.

\noindent $(iii)$ This case corresponds to $P_i = W_i \prod_{j=1}^{i-1}(1-W_j)$ and $T_n = \prod_{j=1}^n (1-W_j)$, where $W_i$ are i.i.d. beta$(1,\theta)$ variables. 
Note that for each $i$, $W_i$ is independent of $T_{i-1}$. By conditioning on $T_{i-1}$, we get
\begin{align*}
\sum_{i = 2}^{\infty} \mathbb{E}  \exp\left(-\frac{x W_i}{T_i} \right)
& = \int_0^1  \mathbb{E}\exp \left(-\frac{xw}{T_{i-1}(1-w)} \right) \cdot \theta(1-w)^{\theta-1} dw  \\
& = \int_0^1 \int_0^1 \exp \left(-\frac{xw}{t (1-w)} \right) \cdot \frac{\theta}{u} \cdot \theta(1-w)^{\theta-1} dt dw,
\end{align*}
where the second equality follows from Ignatov's description \cite{Ignatov} of GEM$(\theta)$ variables as a Poisson point process on $(0,1)$ with intensity $\theta (1-u)^{-1} du$. Note that
$$\int_0^1 \int_0^1  \exp \left(-\frac{xw}{t (1-w)} \right) u^{-1} (1-w)^{\theta-1} dt dw = \int_0^1 E_1\left(\frac{xw}{1-w}\right) (1-w)^{\theta-1}dw,$$
where $E_1(x): = \int_x^{\infty} u^{-1} e^{-u} du$ with $E_1(x) \sim -\log(x)$ as $x \rightarrow 0^+$. It follows by elementary estimates that the above integral is finite, which leads to the desired result.
\end{proof}

Let $\Pi$ be the $P$-biased permutation of $\mathbb{N}_{+}$ for $P$ the geometric$(1-q)$ distribution, with the renewal sequence $(u_{n,q};~n \ge 1)$.
Let $C_{n,1,q}$ be the number of fixed points of $\Pi$ contained in $[n]$, and $\nu_{1,q}$ be the expected number of fixed points of $\Pi$ in a generic component.
According to Proposition \ref{cormain},
$$\lim_{n \rightarrow \infty} \frac{C_{n,1,q}}{n}  = \nu_{1,q} u_{\infty,q}  \quad a.s.,$$
where $u_{\infty,q}: = \lim_{n \rightarrow \infty} u_{n,q}$. Note that with probability $1-q$, a generic component has only one element. This implies that $\nu_{1,q} \ge 1-q$. It follows from Proposition \ref{bon} that $\lim_{q \downarrow 0} u_{\infty,q} = 1$. As a result,
\begin{equation}
\label{fix1}
\lim_{n \rightarrow \infty} \frac{C_{n,1,q}}{n} = \alpha(q) \quad a.s. \quad \mbox{with } \lim_{q \downarrow 0} \alpha(q) = 1.
\end{equation}
Similarly, by letting $\Pi$ be the $P$-biased permutation of $\mathbb{N}_{+}$ for $P$ the GEM$(\theta)$ distribution, and $C_{n,1,\theta}$ be the number of fixed points of $\Pi$ contained in $[n]$,
\begin{equation}
\label{fix2}
 \lim_{n \rightarrow \infty} \frac{C_{n,1,\theta}}{n} = \beta(\theta) \quad a.s. \quad \mbox{with } \lim_{\theta \downarrow 0} \beta(\theta) = 1.
\end{equation}
\section{Regenerative $P$-biased permutations}
\label{s7}

This section provides further analysis of $P$-biased permutations of $\mathbb{N}_{+}$, especially for $P$ the GEM$(1)$ distribution, with the $W_i$'s i.i.d. uniform on $(0,1)$.
While the formulas provided by \eqref{ie}, \eqref{PTa}, or by summing the r.h.s. of \eqref{genbias} over all permutations $\pi \in \mathfrak{S}_k$ for the renewal probabilities $u_k$ and their limit $u_{\infty}$ are quite explicit, it is not easy to evaluate these integrals and their limit directly.
For instance, even in the simplest case where the $W_i$'s are uniform on $(0,1)$, explicit evaluation of $u_k$ for $k \ge 2$ involves the values of $\zeta(j)$ of the Riemann zeta function at $j = 2, \ldots, k$, as indicated later in Proposition \ref{conju}.

We start with an exact simulation of the $P$-biased permutation for any $P = (P_1,P_2, \ldots)$ with $P_i>0$ for all $i\ge 1$ involving the following construction of a process $(\mathcal{W}_k;~ k \ge 1)$ with state space the set of finite unions of open subintervals of $(0,1)$, from $P$ and a collection of i.i.d. uniform variables $U_1, U_2 ,\ldots$ on $(0,1)$ independent of $P$.
\begin{itemize}
\item
Construct $F_j = P_1 + \cdots + P_j$ until the least $j$ such that $F_j > U_1$.
Then set 
$$\mathcal{W}_1 = \left( \bigcup_{i = 1}^{j-1} (F_{i-1},F_i) \right) \cup ( F_j, 1),$$
with convention $F_0: = 0$.
\item 
Assume that $\mathcal{W}_{k-1}$ has been constructed for some $k \ge 2$ as a finite union of open intervals with the rightmost interval $(F_j,1)$ for some $j \ge 1$. 
If $U_k$ lands in one of the intervals of $\mathcal{W}_{k-1}$ that is not the rightmost interval, then remove that interval from $\mathcal{W}_{k-1}$ to create $\mathcal{W}_k$.
If $U_k$ hits the rightmost interval $(F_j,1)$, then construct $F_\ell = F_j + P_{j+1} + \cdots + P_\ell$ for $\ell > j$ until the least $\ell$ such that $F_\ell > U_k$. Set
$$\mathcal{W}_k = (\mathcal{W}_{k-1} \cap (0,F_j)) \cup \left( \bigcup_{i = j+1}^{\ell -1} (F_{i-1}, F_i) \right) \cup (F_{\ell},1).$$
\end{itemize}
It is not hard to see that a $P$-biased permutation $\Pi$ of $\mathbb{N}_{+}$ can be recovered from the process $(\mathcal{W}_k;~ k \ge 1)$ driven by $P$, with $\Pi_k$ a function of $\mathcal{W}_1, \ldots, \mathcal{W}_k$. 
In particular, the length $Y_1$ of the first component of $\Pi$ is 
$$
Y_1 = \min \{k \ge 1: \mathcal{W}_k \mbox{ is composed of a single interval } (F_\ell,1) \mbox{ for some }l\}.
$$
In the sequel, the notation $\stackrel{\theta}{=}$ or $\stackrel{\theta}{\approx}$ indicates exact or approximate evaluations for the GEM$(\theta)$ model; that is the residual factors $W_i$ are i.i.d. beta$(1,\theta)$ distributed.
By a simulation of the process $(\mathcal{W}_k;~ k \ge 1)$ for GEM$(1)$, we get some surprising results:
\begin{equation}
\label{simulation}
\mathbb{E} Y_1 \stackrel{1}{\approx} 3 \quad \mbox{and} \quad Var(Y_1) \stackrel{1}{\approx} 11,
\end{equation}
which suggests that 
\begin{equation}
\label{ukone}
u_{\infty} = 1 / \mathbb{E}Y_1 \stackrel{1}{=} 1/3. 
\end{equation}

These simulation results \eqref{simulation} are explained by the following lemma, which provides an alternative approach to the evaluation of $u_k$ derived from a RAM.
This lemma is suggested by work of Gnedin and coauthors on the Bernoulli sieve \cite{Gnedinsieve,GIM,Gsmall}, and following work on extremes and gaps in sampling from a RAM by Pitman and Yakubovich \cite{PY17, P17}.
\begin{lemma}
Let $X_1, X_2, \ldots$ be a sample from the RAM \eqref{ram} with i.i.d. stick-breaking factors $W_i \stackrel{(d)}{=} W$ for some distribution of $W$ on $(0,1)$.
For positive integers $n$ and $k = 0,1, \ldots$ let 
\begin{equation}
Q_n^*(k):= \sum_{i=1}^n 1 (X_i > k )
\end{equation}
represent the number of the first $n$ balls which land outside the first $k$ boxes. For $m = 1,2, \ldots$ 
let $n(k,m):= \min \{n : Q_n^*(k) = m \}$ be the first time $n$ that there are $m$ balls outside the first $k$ boxes. 
Then:
\begin{itemize}
\item
For each $k$ and $m$ there is the equality of joint distributions
\begin{equation}
\left( Q_{n(k,m)} ^* (k-j), 0 \le j \le k \right) \stackrel{(d)}{=} ( \widehat{Q}_j, 0 \le j \le k \,|\, \widehat{Q}_0 = m )
\end{equation}
where $(\widehat{Q}_0, \widehat{Q}_1, \ldots)$ with $1 \le \widehat{Q}_0 \le \widehat{Q}_1 \cdots$ is a Markov chain with state space $\mathbb{N}_{+}$ and stationary transition probability function
\begin{equation}
\label{hatqdef}
\widehat{q}(m,n) :=  {n - 1\choose m - 1} \mathbb{E} W^{n-m} (1-W)^m \quad \mbox{for}~m \le n.
\end{equation}
So $\widehat{q}(m, \bullet)$ is the mixture of Pascal $(m,1-W)$ distributions, and the distribution of the $\widehat{Q}$ increment from state $m$ is mixed negative binomial $(m, 1-W)$.
\item
For each $k \ge 1$ the renewal probability $u_k$ for the $P$-biased permutation of $\mathbb{N}_{+}$ for $P$ a RAM is the probability that the Markov chain $\widehat{Q}$ started in state $1$
is strictly increasing for its first $k$ steps:
\begin{equation}
\label{ukform}
u_k = \mathbb{P}( \widehat{Q}_0 < \widehat{Q}_1 < \cdots < \widehat{Q}_k \,|\, \widehat{Q}_0 = 1 ).
\end{equation}
\item
The sequence $u_k$ is strictly decreasing, with limit $u_\infty \ge 0$ which is the probability that the Markov chain $\widehat{Q}$ started in state $1$ is strictly increasing forever:
\begin{equation}
\label{uinfform}
u_\infty = \mathbb{P}( \widehat{Q}_0 < \widehat{Q}_1 < \cdots \,|\, \widehat{Q}_0 = 1 ).
\end{equation}
\end{itemize}
\end{lemma}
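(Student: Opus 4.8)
The plan is to reduce the whole lemma to a ``layer‑peeling'' analysis of the underlying Bernoulli‑sieve occupancy scheme, with the Markov‑chain identity doing all the work and the remaining two bullets following as quick corollaries. I would set up by conditioning on the stick‑breaking factors $W=(W_1,W_2,\dots)$ and realizing the i.i.d.\ sample through independent coin flips: ball $i$ stops in box $j$ iff it fails coins $1,\dots,j-1$ and succeeds coin $j$, where coin $l$ succeeds with probability $W_l$ and the flips are independent across $i$ and $l$. Then $Q^*_n(l)=\#\{i\le n:\text{ball }i\text{ fails coins }1,\dots,l\}$, and $n(k,m)$ depends only on the coins $1,\dots,k$. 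Writing $N:=n(k,m)$ and $\widehat{Q}_j:=Q^*_N(k-j)$, one has $\widehat{Q}_0=m$ by definition, $\widehat{Q}_j$ is nondecreasing in $j$, and $\widehat{Q}_j-\widehat{Q}_{j-1}$ is the number of the first $N$ balls that stop in box exactly $k-j+1$.

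To prove the Markov‑chain identity I would peel off boxes from $k$ down to $1$. The key observation is that the ball arriving at time $N$ lies in box $>k\subseteq\{k-j+1,k-j+2,\dots\}$, hence is the $\widehat{Q}_j$‑th ball (in arrival order) to land in $\{k-j+1,k-j+2,\dots\}$; thus $N$ is simultaneously the arrival index of the $\widehat{Q}_j$‑th ``layer‑$j$ ball'' for every $0\le j\le k$. Among the balls landing in $\{k-j+1,k-j+2,\dots\}$, each one, conditionally on $W$, lands in box exactly $k-j+1$ with probability $W_{k-j+1}$ and in $\{k-j+2,\dots\}$ with probability $1-W_{k-j+1}$, independently across such balls, and this split is governed only by the fresh coin $W_{k-j+1}$, which is independent of everything determining $\widehat{Q}_0,\dots,\widehat{Q}_{j-1}$. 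Waiting for the $\widehat{Q}_{j-1}$‑th ball of $\{k-j+2,\dots\}$ — which lands at time $N$ — therefore yields $\widehat{Q}_j=\widehat{Q}_{j-1}+(\text{number of box-}(k-j+1)\text{ balls before it})$, and the added count is, conditionally on the past and on $\widehat{Q}_{j-1}$, a mixed negative‑binomial$(\widehat{Q}_{j-1},1-W)$ variable. Proceeding inductively in $j$ gives that $(\widehat{Q}_j)_{0\le j\le k}$ is a Markov chain started at $m$ with $\widehat{Q}_j\mid\widehat{Q}_{j-1}=q$ distributed as Pascal$(q,1-W)$ mixed over $W$, which is exactly the kernel $\widehat{q}$ of \eqref{hatqdef}; identifying $\binom{n-1}{m-1}w^{n-m}(1-w)^m$ as the Pascal$(m,1-w)$ mass function gives the stated mixture description and the negative‑binomial interpretation of the increment.

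The formula \eqref{ukform} then follows by taking $m=1$. Indeed $u_k=\mathbb{P}([k]\text{ is a block of }\Pi)$, and $[k]$ is a block exactly when each of the values $1,\dots,k$ occurs in the driving sample $(X_1,X_2,\dots)$ before the first value exceeding $k$, i.e.\ when every box in $\{1,\dots,k\}$ is occupied by one of the balls $X_1,\dots,X_{n(k,1)-1}$. With $m=1$ the increment $\widehat{Q}_j-\widehat{Q}_{j-1}$ counts precisely those of these balls that fall in box $k-j+1$, so $\{\widehat{Q}_0<\widehat{Q}_1<\dots<\widehat{Q}_k\}$ is exactly the event that boxes $1,\dots,k$ are all occupied; combined with the Markov‑chain identity this gives $u_k=\mathbb{P}(\widehat{Q}_0<\dots<\widehat{Q}_k\mid\widehat{Q}_0=1)$.

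Finally, monotonicity and the limit are immediate: since $\widehat{q}(q,q)=\mathbb{E}[(1-W)^q]>0$ for every finite $q$ (as $0<W<1$ a.s.), while $\widehat{Q}_k<\infty$ a.s.\ and $\mathbb{P}(\widehat{Q}_0<\dots<\widehat{Q}_k\mid\widehat{Q}_0=1)=u_k>0$, there is strictly positive probability of a strictly increasing run of length $k$ followed by a flat step, whence $u_{k+1}<u_k$; and $u_\infty=\lim_k u_k=\mathbb{P}\bigl(\bigcap_{k\ge 1}\{\widehat{Q}_0<\dots<\widehat{Q}_k\}\mid\widehat{Q}_0=1\bigr)$ by continuity of probability from above. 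The only delicate point, and where I expect the real work to lie, is making the layer‑peeling argument rigorous — verifying that $N$ genuinely serves as the correct stopping index in every layer at once and that the successive within‑layer splits are driven by honestly fresh, independent factors $W_l$, so that the full Markov property (not merely the one‑step marginals) holds.
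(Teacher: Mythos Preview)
Your proof is correct and takes a genuinely different route from the paper. The paper embeds the occupancy process in continuous time: realizing $X_i=j$ as $U_i\in(F_{j-1},F_j]$ for i.i.d.\ uniforms $U_i$ and $F_j=P_1+\cdots+P_j$, it invokes the classical identity between Pascal counting processes and the standard Yule birth process $Y_m$ to obtain
\[
\bigl(Q^*_{n(k,m)}(i),\,0\le i\le k\bigr)\stackrel{(d)}{=}\Bigl(Y_m\Bigl(\textstyle\sum_{l=i+1}^k -\log(1-W_l)\Bigr),\,0\le i\le k\Bigr),
\]
then reverses the index using the exchangeability $(W_1,\dots,W_k)\stackrel{(d)}{=}(W_k,\dots,W_1)$ to rewrite the right side as $Y_m(\tau_j)$ with $\tau_j=\sum_{l=1}^j -\log(1-W_l)$; the Markov property and the kernel $\widehat q$ are then read off from the Yule process sampled at an independent random walk of times. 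Your layer-peeling argument is more elementary: it bypasses the continuous-time embedding and derives the Pascal transition directly from the Bernoulli-sieve coin flips, with the Markov property coming from the independence of the fresh factor $W_{k-j+1}$ at each layer (which is your implicit use of the same exchangeability). What the paper's route buys is a clean link to the Yule representation, reused afterwards via Kendall's representation of $Y_1$ to connect \eqref{uinfform} back to the integral \eqref{PTa}; what your route buys is a self-contained argument that needs no auxiliary process and makes the role of the stick-breaking independence transparent. Your treatments of \eqref{ukform} and of the strict monotonicity and limit are also more explicit than the paper's, which simply says the remaining parts ``follow easily.'' The ``delicate point'' you flag---that $N$ is simultaneously the $\widehat Q_j$-th arrival in every layer and that the successive splits use genuinely fresh $W_l$'s---is exactly the substance of the argument, and your outline handles it correctly.
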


\begin{proof}
For $0 < v < 1$ and $U_1, U_2, \ldots$ a sequence of i.i.d. uniform $[0,1]$ variables, let
$$
N_n(v,1):= \sum_{i = 1}^n 1 ( v < U_i < 1)
$$
be the number of the first $n$ values that fall in $(v,1)$, and let 
$$g(v,m):=  \min \{n \ge 1 : N_n(v,1) = m \}$$
 be the
random time when $N_n(v,1)$ first reaches $m$. So $g(v,m)$ has the Pascal$(m, 1-v)$ distribution of the sum
of $m$ independent random variables with geometric $(1-v)$ distribution on $\mathbb{N}_{+}$.
Then there is the well known identity in distribution of Pascal counting processes \cite{F79,BR91}
\begin{equation}
\label{pascal}
\left( N_{g(v,m)} (u,1 ) , 0 \le u \le v \right) \stackrel{(d)}{=} \left(    Y_m  \left( \log \left( \frac{ 1- v}{1-u} \right) \right), 0 \le u \le v \right),
\end{equation}
where $(Y_m(t), t \ge 0)$ is a {\em standard Yule process}; that is the pure birth process on positive integers with birth rate $k$ in state $k$, with initial state $Y_m(0) = m$.
Let the sample $X_1, X_2, \ldots$ from the RAM be constructed as $X_i = j$ iff $U_i \in (F_{j-1},F_j]$ where $F_j:= 1 - \prod_{i=1}^j(1-W_i)$ for
a sequence of stick-breaking factors $(W_i;~i \ge 1)$ independent of the uniform sample points $(U_i;~i \ge 1)$. 
Then by construction $Q^{*}_{n(k,m)} (i) = N_{g(F_k,m)} (F_i,1 )$ for each $0 \le i \le k$. 
The identity in distribution 
\eqref{pascal} yields
\begin{equation}
\label{pascalq}
\left( Q^{*}_{n(k,m)} (i) , 0 \le i \le k  \right) \stackrel{(d)}{=} \left(    Y_m  \left( \log \left( \frac{ 1- F_k}{1-F_i} \right) \right), 0 \le i \le k \right),
\end{equation}
first conditionally on $F_1, \ldots, F_k$,  then also unconditionally, where on the right side it is assumed that 
the Yule process $Y_m$ is independent of $F_1, \ldots, F_k$.
By a reversal of indexing, and the equality in distribution $(W_k, \ldots, W_1) \stackrel{(d)}{=} (W_1, \ldots, W_k)$, this gives
\begin{equation}
\label{pascalq}
\left( Q^{*}_{n(k,m)} (k - j) , 0 \le j \le k  \right) \stackrel{(d)}{=} \left(    Y_m  ( \tau_j ), 0 \le j \le k \right),
\end{equation}
where $\tau_j := \sum_{i=1}^j - \log(1-W_i)$, and the $W_i$ are independent of the Yule process $Y_m$. 
It is easily shown that the process on the right side of \eqref{pascalq} is a Markov chain with stationary transition function $\widehat{q}$ as in 
\eqref{hatqdef}. This gives the first part of the lemma, and the remaining parts follow easily.
\end{proof}

For $P$ the GEM$(\theta)$ distribution, the transition probability function $\widehat{q}$ of the $\widehat{Q}$ chain simplifies to
\begin{equation}
\label{gemhatq} 
\widehat{q}(m,n)  \stackrel{\theta}{=}  \frac{ (m)_{n-m} (\theta)_m }{ (1 + \theta)_{n} }  \stackrel{1}{=}  \frac{ m }{ n ( n+1) }  \quad \mbox{for}~m \le n,
\end{equation}
where 
$$(x)_j:= x ( x+1) \cdots (x + j-1) = \frac{\Gamma(x+j)}{\Gamma(x)}. $$
The Markov chain $\widehat{Q}$ with the transition probability function \eqref{gemhatq} for $\theta = 1$ was first encountered
by Erd\"os, R\'enyi and Sz\"usz in their study \cite{ERS} of {\em Engel's series} derived from $U$ with uniform $(0,1)$ distribution, that is
$$
U = \frac{1}{q_1} + \frac{1}{q_1 q_2 } + \cdots + \frac{1 } { q_1 q_2 \cdots q_n} + \cdots ,
$$
for a sequence of random positive integers $q_j \ge 2$. They showed that 
$$
(q_{k+1} - 1, k \ge 0)  \stackrel{(d)}{=}  (\widehat{Q}_k, k \ge 0 ),
$$
for $\widehat{Q}$ with transition matrix $\widehat{q}$ as in \eqref{gemhatq} for $\theta = 1$, and initial distribution 
\begin{equation}
\label{engelinit}
\mathbb{P}(\widehat{Q}_0 = m ) = \frac{ 1}{ m (m+1) } \quad \mbox{for}~ m \ge 1.
\end{equation}
R\'enyi \cite[Theorem 1]{Renyi} showed that for this Markov chain derived from Engel's series, the occupation times 
\begin{equation}
\label{occdef}
G_j:= \sum_{k=0}^\infty 1( \widehat{Q}_k = j) \quad  \mbox{for}~ j \ge 1,
\end{equation}
are independent random variables with geometric$(j/(j+1))$ distributions on $\mathbb{N}_0$.
R\'enyi deduced that with probability one the chain $\widehat{Q}$ is eventually strictly increasing, and \cite[(4.5)]{Renyi} that for the initial distribution \eqref{engelinit} of $\widehat{Q}_0$
\begin{equation}
\label{qone}
\mathbb{P}( \widehat{Q}_0 < \widehat{Q}_1 < \cdots) \stackrel{1}{=} \prod_{j=1}^\infty \mathbb{P}(G_j \le 1) \stackrel{1}{=} \prod_{j=1}^\infty \frac{ j(j+2) }{(j+1)^2} = \frac{1}{2},
\end{equation}
by telescopic cancellation of the infinite product. 
A slight variation of R\'enyi's calculation gives for each possible initial state $m$ of the chain
\begin{equation}
\label{qonem}
\mathbb{P}(\widehat{ Q}_0 < \widehat{Q}_1 < \cdots \,|\, \widehat{Q}_0 = m ) \stackrel{1}{=}  \frac{ m } {m+1 } \prod_{j={m+1}}^\infty \frac{ j (j+2)}{(j+1)^2}  = \frac{m}{m + 2}.
\end{equation}
The instance $m=1$ of this formula, combined with \eqref{uinfform}, proves the formula \eqref{ukone} for $u_\infty$ for the GEM$(1)$ model.
A straightforward variation of these calculations gives the corresponding result for $P$ the GEM$(\theta)$ distribution:
\begin{equation}
\label{uktheta}
u_\infty \stackrel{\theta}{=}  \frac{1}{1+\theta} \prod_{j =2}^{\infty} \frac{j(j+2 \theta)}{(j+\theta)^2}= \frac{\Gamma(\theta+2)\Gamma(\theta+1)}{\Gamma(2 \theta+2)}.
\end{equation}
A key ingredient in this evaluation is the fact that in the GEM$(\theta)$ model the random occupation times $G_j$ of $\widehat{Q}$ are independent geometric variables, see \cite{PY17,P17}.
For a more general RAM, the $G_j$'s may not be independent, and they may not be exactly geometric, only conditionally so given $G_j \ge 1$.
The Yule representation \eqref{pascalq} of $\widehat{Q}$ given $\widehat{Q}(0) = 1$ as $\widehat{Q}(j) = Y_1(\tau_j)$ combined with Kendall's representation \cite[Theorem 1]{Kendall66} of $Y_1(t) = 1+ N( \varepsilon  (e^t - 1) )$ for $N$ a rate 1 Poisson process and $\varepsilon$ standard exponential independent of $N$, only reduces the expression \eqref{uinfform} for $u_\infty$ back to the limit form as $n \to \infty$ of the previous expression \eqref{PTa}.  
So $u_\infty$ for a RAM is always an integral over $x$ of the expected value of an infinite product of random variables.
See also \cite{Ik1,Ik2,PY17} for treatment of closely related problems.
Now we give a proof of Proposition \ref{simplecond}.
\begin{proof}[Proof of Proposition \ref{simplecond}]
The result of \cite[Theorem 3.3]{Gsmall} shows that under the assumptions of the proposition,
if $L_n$ is the number of empty boxes to the left of the rightmost box when $n$ balls are thrown, then 
$$
L_n \stackrel{(d)}{\longrightarrow} L_{\infty}:= \sum_{j=1}^\infty (G_j - 1)_{+}  \quad \mbox{ as } n \to \infty,
$$
where the right side is defined by the occupation counts \eqref{occdef} of the Markov chain $\widehat{Q}$ for the special entrance law 
\begin{equation}
\label{entrancelaw}
\mathbb{P}( \widehat{Q}_0 = m ) =  \frac{ \mathbb{E} W^m }{ m \, \mathbb{E} \left[ - \log(1 - W) \right]} \quad \mbox{for} ~ m \ge 1,
\end{equation}
which is the limit distribution of $Z_n$, the number of balls in the rightmost occupied box, as $n \to \infty$, and that also
$$
\mathbb{E} L_n \to \mathbb{E} L_\infty = \frac{ \mathbb{E} \left[ - \log W \right] }{ \mathbb{E} \left[- \log (1-W)\right] },
$$
which is finite by assumption. It follows that $\mathbb{P}(L_\infty < \infty ) = 1$,
hence also that $\mathbb{P}_m(L_\infty < \infty ) = 1$ for every $m$,
where $\mathbb{P}_m(\bullet):= \mathbb{P}(\bullet \,|\, \widehat{Q}_0 = m)$.
Let $R:= \max \{j : G_j > 1 \}$ be the index of the last repeated value of the Markov chain. 
From $\mathbb{P}_1(L_\infty < \infty) = 1$ it follows that $\mathbb{P}_1(R < \infty) = 1$,
hence that $\mathbb{P}_1 (R  = r ) > 0$ for some positive integer $r$.
But  for $r = 2, 3, \ldots$, a last exit decomposition gives
$$
\mathbb{P}_1(R = r)   = \left( \sum_{k=1}^\infty \mathbb{P}_1( \widehat{Q}_{k-1} = \widehat{Q}_k = r )  \right)  \mathbb{P}_r ( L_\infty  = 0 ),
$$
where both factors on the right side must be strictly positive to make $\mathbb{P}_1(R=r) >0$. Combined
with a similar argument if $\mathbb{P}_1(R = 1) >0$, this implies $\mathbb{P}_r ( L_\infty  = 0 ) >0$ for some $r \ge 1$,
hence also
$$ 
u_\infty = \mathbb{P}_1( L_\infty = 0 )  \ge \mathbb{P}_1( G_i \le 1 \mbox{ for } 0 \le i <r, G_r = 1) \mathbb{P}_r (L_\infty = 0) >0,
$$
which is the desired conclusion.
\end{proof}

To conclude, we present explicit formulas for $u_k$ of a GEM$(1)$-biased permutation of $\mathbb{N}_{+}$. The proof is deferred to the forthcoming article \cite{DPT}.
\begin{proposition} \cite{DPT}
\label{conju}
Let $\Pi$ be a GEM$(1)$-biased permutation of $\mathbb{N}_{+}$, with the renewal sequence $(u_k;~ k \geq 0)$. Then $(u_k;~k \ge 0)$ is characterized by any one of the following equivalent conditions:
\begin{enumerate}[$(i).$]
\item
The sequence $(u_k;~ k \ge 0)$ is defined recursively by
\begin{equation}
\label{rec}
2 u_{k} + 3 u_{k-1} + u_{k-2} \stackrel{1}{=} 2 \zeta(k) \quad \mbox{with } u_0 = 1, \, u_1 = 1/2,
\end{equation}
where $\zeta(k): = \sum_{n = 1}^{\infty} 1/n^k$ is the Riemann zeta function.
\item
For all $k \ge 0$,
\begin{equation}
\label{rzs}
u_{k} \stackrel{1}{=} (-1)^{k-1} \left(2 - \frac{3}{2^k} \right) + \sum_{j=2}^{k}  (-1)^{k-j} \left(2 -\frac{1}{2^{k-j}} \right) \zeta(j).
\end{equation}
\item
For all $k \ge 0$,
\begin{equation}
\label{positive}
u_{k} \stackrel{1}{=} \sum_{j = 1}^{\infty} \frac{2}{j^k(j+1)(j+2)}.
\end{equation}
\item
The generating function of $(u_{k};~ k \ge 0)$ is 
\begin{equation}
\label{Uz}
U(z) : = \sum_{k=0}^{\infty}u_k z^k \stackrel{1}{=} \frac{2}{(1+z)(2+z)} \Bigg[ 1 +   \Bigg(2 - \gamma - \Psi(1-z)\Bigg) z\Bigg],
\end{equation}
where 
$\gamma: = \lim_{n \rightarrow \infty} (\sum_{k=1}^n 1/k - \ln n) \approx 0.577$ is the {\em Euler constant}, and
$\Psi(z): = \Gamma'(z)/\Gamma(z)$ with
$\Gamma(z): = \int_0^\infty t^{z-1} e^{-t} dt$, is the {\em digamma function}. 
\end{enumerate}
\end{proposition}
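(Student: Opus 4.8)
The plan is to work from the Markov-chain representation \eqref{ukform}, $u_k = \mathbb{P}(\widehat{Q}_0 < \widehat{Q}_1 < \cdots < \widehat{Q}_k \mid \widehat{Q}_0 = 1)$ with the GEM$(1)$ transition function $\widehat{q}(m,n) = m/[n(n+1)]$ of \eqref{gemhatq}, to extract from it the generating function \eqref{Uz}, and then to deduce \eqref{rec}, \eqref{rzs}, \eqref{positive} from \eqref{Uz} by routine power-series algebra. As a preliminary orientation one can already telescope the path sum $u_k = \sum_{1 = j_0 < j_1 < \cdots < j_k} \prod_{i=1}^k \widehat{q}(j_{i-1},j_i)$ to $\sum_{1 < j_1 < \cdots < j_k} \big( j_k(j_k+1) \prod_{i=1}^{k-1}(j_i+1) \big)^{-1}$, which displays $u_k$ as a multiple sum in need of closed-form evaluation.

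First I would set $\phi_k(j) := \mathbb{P}(\widehat{Q}_k = j,\ \widehat{Q}_0 < \cdots < \widehat{Q}_k \mid \widehat{Q}_0 = 1)$, so $u_k = \sum_{j \ge 1} \phi_k(j)$, $\phi_0(j) = 1(j=1)$, and one step of the chain gives $\phi_k(j) = [j(j+1)]^{-1} \sum_{1 \le i < j} i\,\phi_{k-1}(i)$ for $k \ge 1$. Forming the generating functions $\Phi_j(z) := \sum_{k \ge 0}\phi_k(j) z^k$ and the weighted partial sums $\psi_j(z) := \sum_{i=1}^j i\,\Phi_i(z)$, this recursion becomes $\psi_j(z) = (1 + z/(j+1))\,\psi_{j-1}(z) + 1(j=1)$ with $\psi_0 \equiv 0$, which telescopes to the finite product $\psi_j(z) = \prod_{m=3}^{j+1}(1 + z/m)$, whence $U(z) = \sum_{j \ge 1}\Phi_j(z) = 1 + z \sum_{j \ge 2} [j(j+1)]^{-1} \prod_{m=3}^{j}(1 + z/m)$.

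The crux is the evaluation of this sum. Using $\prod_{m=3}^{j}(1 + z/m) = 2\,\Gamma(j+1+z)/[\Gamma(3+z)\Gamma(j+1)]$, the Beta integral $[j(j+1)]^{-1} = \int_0^1 t^{j-1}(1-t)\,dt$, and the binomial series $\sum_{j \ge 0}\Gamma(j+1+z)\Gamma(j+1)^{-1} t^j = \Gamma(1+z)(1-t)^{-(1+z)}$, the sum reduces, after interchanging summation and integration and substituting $s = 1-t$, to a combination of elementary integrals together with $\int_0^1 (1 - s^{-z})(1-s)^{-1}\,ds = \Psi(1-z) + \gamma$. Collecting terms and simplifying via $\Gamma(1+z)/\Gamma(3+z) = [(1+z)(2+z)]^{-1}$ then yields exactly \eqref{Uz}.

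From \eqref{Uz} the remaining characterizations follow by power-series manipulations. A partial-fraction computation identifies the right-hand side of \eqref{Uz} with $\sum_{j \ge 1} 2j/[(j+1)(j+2)(j-z)]$, whose coefficient of $z^k$ is $\sum_{j \ge 1} 2/[j^k(j+1)(j+2)]$, giving \eqref{positive}; summing the identity $2 + 3j + j^2 = (j+1)(j+2)$ against $2/[j^k(j+1)(j+2)]$ gives the recurrence \eqref{rec}, with $u_0 = 1$ and $u_1 = 1/2$ checked by telescoping; and expanding both factors of \eqref{Uz} as power series, $2[(1+z)(2+z)]^{-1} = \sum_{k \ge 0}(2(-1)^k - (-1/2)^k) z^k$ and $1 + z(2 - \gamma - \Psi(1-z)) = 1 + 2z + \sum_{m \ge 2}\zeta(m) z^m$, and multiplying them out term by term gives \eqref{rzs}. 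The main obstacle is the closed-form evaluation of $U(z)$: once the product formula for $\psi_j(z)$ is available this is, in principle, a bookkeeping exercise with standard Gamma- and digamma-function identities, but recognizing that the generating function closes up through $\Psi$, choosing the right auxiliary generating function (the weighted partial sums $\psi_j$), and carrying the special-function algebra without error is where the work lies.
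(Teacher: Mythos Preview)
The paper does not prove this proposition at all: immediately before the statement it says ``The proof is deferred to the forthcoming article \cite{DPT}.'' So there is no argument in the paper to compare your attempt against.

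That said, your proposal is a correct and complete proof. The reduction of the path sum via $\psi_j(z)=\sum_{i\le j} i\,\Phi_i(z)$ to the closed product $\psi_j(z)=\prod_{m=3}^{j+1}(1+z/m)$ is valid, and the evaluation of $U(z)=1+z\sum_{j\ge2}[j(j+1)]^{-1}\prod_{m=3}^{j}(1+z/m)$ by the Beta integral, the binomial series, and the digamma integral $\int_0^1(1-s^{-z})(1-s)^{-1}\,ds=\Psi(1-z)+\gamma$ does yield exactly \eqref{Uz}; carrying it through one finds the numerator $(1+z)(2+z)+z(1-z)=2+4z$, giving $U(z)=2[1+(2-\gamma-\Psi(1-z))z]/[(1+z)(2+z)]$. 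Your derivations of \eqref{positive}, \eqref{rec}, and \eqref{rzs} from \eqref{Uz} are routine and correct: the partial-fraction identity $U(z)=\sum_{j\ge1}2j/[(j+1)(j+2)(j-z)]$ gives \eqref{positive}, the factorization $2+3j+j^2=(j+1)(j+2)$ gives \eqref{rec} for $k\ge2$ with the initial values checked by telescoping, and the Cauchy product of the series $2/[(1+z)(2+z)]=\sum_{k\ge0}(2(-1)^k-(-1/2)^k)z^k$ and $1+2z+\sum_{m\ge2}\zeta(m)z^m$ gives \eqref{rzs}. Everything rests on the representation \eqref{ukform}, which the paper does establish.
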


The distribution of $Y_1$, that is $f_{k}: = \mathbb{P}(Y_1 = k)$ for all $k \ge 1$, is determined by $(u_k;~ k \ge 0)$ or $U(z)$ via the relations \eqref{ufrecursion}-\eqref{UFrelation}.
It is easy to see that the generating function $F(z)$ of $(f_k;~ k \ge 1)$ is real analytic on $(0,z_0)$ with $z_0 \approx 1.29$.
This implies that all moments of $Y_1$ are finite.
By expanding $F(z)$ into power series at $z=1$, we get:
\begin{equation}
\label{powerseries}
F(z)  \stackrel{1}{=}  1 + 3(z-1) + \frac{17}{2} (z-1)^2 + \frac{1}{2}(47 + \pi^2) (z-1)^3 + \cdots,
\end{equation}
which agrees with the simulation \eqref{simulation}, since $\mathbb{E}Y_1 = F'(1) \stackrel{1}{=} 3$ and $Var(Y_1) = 2 F''(1) + F'(1) - F'(1)^2 \stackrel{1}{=}11$.

\bibliographystyle{plain}
\bibliography{Regenerative}
\end{document}